\def\rr{{\mathbb R}}
\def\rn{{{\rr}^n}}
\def\zz{{\mathbb Z}}
\def\kzz{{k\in\zz_+}}
\def\nn{{\mathbb N}}
\def\cc{{\mathbb C}}
\def\cg{{\mathcal G}}
\def\cx{{\mathcal X}}
\def\az{\alpha}
\def\supp{{\mathop\mathrm{\,supp\,}}}
\def\diam{{\mathop\mathrm{\,diam\,}}}
\def\loc{{\mathop\mathrm{\,loc\,}}}
\def\stz{\stackrel}
\def\noz{{\nonumber}}
\def\fz{\infty}
\def\lz{\lambda}
\def\dz{\delta}
\def\wz{\widetilde}
\def\ez{\epsilon}
\def\wez{\wz\ez}
\def\ezp{{\ez'}}
\def\wez{{\wz\ez}}
\def\bz{\beta}
\def\kz{{\kappa}}
\def\gz{{\gamma}}
\def\vz{\varphi}
\def\tz{\theta}
\def\ls{\lesssim}
\def\gs{\gtrsim}
\def\laz{\langle}
\def\raz{\rangle}
\def\oz{\overline}
\def\wg{\wedge}
\def\lf{\left}
\def\r{\right}
\def\ocg{{\mathring{\cg}}}
\def\cgbz{{\cg(\bz,\gz)}}
\def\cgm{{\cg(x_1,r,\bz,\gz)}}
\def\cgom{{\ocg(x_1,r,\bz,\gz)}}
\def\lp{{L^p(\cx)}}
\def\db{{\dot B^s_{p,\,q}(\cx)}}
\def\df{{\dot F^s_{p,\,q}(\cx)}}
\def\sh{{h^p(\cx)}}
\def\sbmo{{\mathop\mathrm {bmo\,}}}
\def\bint{{\ifinner\rlap{\bf\kern.35em--}
\int\else\rlap{\bf\kern.45em--}\int\fi}\ignorespaces}
\def\bbint{{\ifinner\rlap{\bf\kern.35em--}
\hspace{0.078cm}\int\else\rlap{\bf\kern.45em--}\int\fi}\ignorespaces}
\def\b{{B^s_{p,\,q}(\cx)}}
\def\f{{F^s_{p,\,q}(\cx)}}
\def\dfy{{\dot F^s_{\infty,\, q}(\cx)}}
\def\fy{{F^s_{\infty,\, q}(\cx)}}
\def\nj2{{N_{J_2}}}
\def\qji2{{Q^{J_2}_{i_2}}}
\def\qal{{Q^l_\az}}
\def\qtn{{Q_\tau^{k,\nu}}}
\def\qto{{Q_\tau^{0,\nu}}}
\def\qtop{{Q_{\tau'}^{0,\nu'}}}
\def\ytn{y_\tau^{k,\nu}}
\def\yto{y_\tau^{0,\nu}}
\def\ytop{y_{\tau'}^{0,\nu'}}
\def\nkt{{N(k,\tau)}}
\def\ik{{I_k}}
\def\qtnp{{Q_{\tau'}^{k',\nu'}}}
\def\ytnp{y_{\tau'}^{k',\nu'}}
\def\nktp{{N(k',\tau')}}
\def\ikp{{I_{k'}}}
\def\wdk{{\wz D_k}}
\def\wdkp{{\wz D_{k'}}}
\def\dkp{{D_{k'}}}
\def\wdo{{\wz D_0}}
\def\io{{I_0}}
\def\ik{{I_k}}
\def\ikp{{I_{k'}}}
\def\nz{{N(0,\tau)}}
\def\nzp{{N(0,\tau')}}
\def\hs{\hspace{0.3cm}}
\def\dsum{\displaystyle\sum}
\def\dint{\displaystyle\int}
\def\dfrac{\displaystyle\frac}
\def\dsup{\displaystyle\sup}
\newtheorem{thm}{Theorem}[section]
\newtheorem{lem}{Lemma}[section]
\newtheorem{prop}{Proposition}[section]
\newtheorem{rem}{Remark}[section]
\newtheorem{cor}{Corollary}[section]
\newtheorem{defn}{Definition}[section]
\numberwithin{equation}{section}
\begin{document}

\arraycolsep=1pt

\title{{\vspace{-5cm}\small\hfill\bf Manuscripta Math., to appear}\\
\vspace{4.5cm}\bf New Properties of Besov and Triebel-Lizorkin Spaces
on RD-Spaces\footnotetext {\hspace{-0.35cm}
2000 {\it Mathematics Subject Classification}.
Primary 46E35; Secondary 42B30, 43A99.\endgraf {\it
Key words and phrases}. RD-space, Besov space, Triebel-Lizorkin
space, local Hardy space, Calder\'on reproducing formula, space of test functions,
local integrability.
\endgraf The first author is supported by the National
Natural Science Foundation (Grant No. 10871025) of China.}}
\author{Dachun Yang and Yuan Zhou}
\date{ }
\maketitle

\begin{center}
\begin{minipage}{13.5cm}
{\small {\bf Abstract} \quad An RD-space $\mathcal X$ is a space of
homogeneous type in the sense of Coifman and Weiss with the
additional property that a reverse doubling property holds in $\mathcal X$.
In this paper, the authors first give several equivalent
characterizations of RD-spaces
and show that the definitions of spaces
of test functions on $\mathcal X$ are independent of the choice of the
regularity $\epsilon\in (0,1)$; as a result of this,
the Besov and Triebel-Lizorkin spaces on $\mathcal X$ are also independent of
the choice of the underlying distribution space.
Then the authors characterize the
norms of inhomogeneous Besov and Triebel-Lizorkin spaces by the
norms of homogeneous Besov and Triebel-Lizorkin spaces together with
the norm of local Hardy spaces in the sense of Goldberg. Also, the
authors obtain the sharp locally integrability of elements in Besov
and Triebel-Lizorkin spaces.}
\end{minipage}
\end{center}

\medskip

\section{Introduction\label{s1}}

\hskip\parindent The theory of Besov and Triebel-Lizorkin spaces plays an important
role in various fields of mathematics such as harmonic analysis,
partial differential equations, geometric analysis and etc; see, for example,
\cite{t06,t83,t92,t97}. Recently, Besov and Triebel-Lizorkin spaces
on metric measure spaces obtained a rapid development; see
\cite{hmy2,my, gks, t06}.

We first recall the definitions of spaces of homogenous type
in \cite{cw1,cw2} and RD-spaces in \cite{hmy2}.
In this paper, we always assume that
$(\cx, d)$ is a metric space with a regular Borel measure $\mu$
such that all balls defined by $d$ have finite and positive measures.
In what follows, set
$\diam(\cx)\equiv\sup\{d(x,\,y):\ x,\ y\in\cx\}$
and for any $x\in\cx$ and $r>0$, set $B(x, r)\equiv\{y\in\cx:\, d(x, y)<r\}$.

\begin{defn}\label{d1.1}
(i) The triple $(\cx, d, \mu)$ is called a space of homogeneous type
if there exists a constant $C_0\in(1,\fz)$
such that for all $x\in\cx$ and $r>0$,
\begin{equation}\label{1.1}
\mu(B(x, 2r))\le C_0\mu(B(x, r)) \ (doubling\ property).
\end{equation}

(ii) The triple  $(\cx, d, \mu)$
is called an RD-space if it is a space of homogeneous type
and there exist constants $a_0,\,\wz C_0\in(1,\fz)$
such that for all $x\in\cx$ and $0<r<\diam(\cx)/a_0$,
\begin{equation*}
\mu(B(x, a_0r))\ge \wz C_0\mu(B(x, r)) \ (reverse\ doubling\ property).
\end{equation*}
\end{defn}

It is easy to see that $\cx$ is an RD-space if and only if
it is a space of homogeneous type
and there exists a constant $\wz a_0\in(1,\fz)$
such that for all $x\in\cx$ and $0<r<\diam(\cx)/\wz a_0$,
$\mu(B(x, \wz a_0r))\ge 2\mu(B(x, r))$. We will establish several
other equivalent characterizations of RD-spaces
in Proposition \ref{p2.1} below.

The following spaces of test functions play a key role in the theory
of function spaces on RD-spaces; see \cite{hmy1, hmy2}. In what follows,
for any $x,$ $y\in\cx$ and $r>0$, set $V(x, y)\equiv\mu(B(x, d(x,y)))$
and $V_r(x)\equiv\mu(B(x, r))$.

\begin{defn}\hspace{-0.2cm}{\bf}\label{d1.2}
Let $x_1\in\cx$, $r\in(0, \fz)$, $\bz\in(0, 1]$ and $\gz\in(0,
\fz)$. A function $\vz$ on $\cx$ is called a test function of
type $(x_1, r, \bz, \gz)$ if there exists a nonnegative constant
$C$ such that

(i) $|\vz(x)|\le C\frac{1}{V_r(x_1)+V(x_1, x)}
\lf[\frac{r}{r+d(x_1, x)}\r]^\gz$ for all $x\in\cx$;

(ii) $|\vz(x)-\vz(y)| \le C\lf[\frac{d(x, y)}{r+d(x_1, x)}\r]^\bz
\frac{1}{V_r(x_1)+V(x_1, x)} \lf[\frac{r}{r+d(x_1, x)}\r]^\gz$
for all $x$, $y\in\cx$ satisfying that $d(x, y)\le[r+d(x_1, x)]/2$.

The space $\cg(x_1, r, \bz, \gz)$ of test functions
is defined to be the set of all test functions
of type $(x_1, r, \bz, \gz)$. If $\vz\in\cg(x_1, r, \bz,\gz)$, its
norm is defined by
$\|\vz\|_{\cg(x_1,\, r,\, \bz,\,\gz)}
\equiv\inf\{C:\, (i) \mbox{ and } (ii) \mbox{ hold}\}$.
\end{defn}

Throughout the whole paper, we fix $x_1\in \cx $ and let $\cg(\bz,\gz)
\equiv\cg(x_1,1,\bz,\gz).$ It is easy to see that
$\cg(\bz, \gz)$ is a Banach space.

For any given $\ez\in(0, 1]$, let $\cg_0^\ez(\bz, \gz)$ be the
completion of the space $\cg(\ez, \ez)$ in $\cg(\bz, \gz)$
when $\bz$, $\gz\in(0, \ez]$.
Obviously, $\cg_0^\ez(\ez, \ez)=\cg(\ez, \ez)$.
Moreover, it is easy to see that $\vz\in\cg_0^\ez(\bz, \gz)$ if and only if
$\vz\in\cg(\bz, \gz)$ and there exists
$\{\phi_i\}_{i\in\nn}\subset\cg(\ez, \ez)$ such that
$\|\vz-\phi_i\|_{\cg(\bz, \gz)}\to0$ as $i\to\fz$.
If $\vz\in\cg_0^\ez(\bz, \gz)$,
define $\|\vz\|_{\cg_0^\ez(\bz, \gz)}\equiv\|\vz\|_{\cg(\bz, \gz)}$.
Obviously, $\cg_0^\ez(\bz, \gz)$ is a Banach space and
$\|\vz\|_{\cg_0^\ez(\bz, \gz)}=\lim_{i\to\fz}\|\phi_i\|_{\cg(\bz, \gz)}$
for the above chosen $\{\phi_i\}_{i\in\nn}$.
Let $(\cg_0^\ez(\bz, \gz))'$ be the set of
all bounded linear functionals $f$ from $\cg_0^\ez(\bz, \gz)$ to $\cc$.
Denote by $\laz f, \vz\raz$ the
natural pairing of elements $f\in (\cg_0^\ez(\bz, \gz))'$ and
$\vz\in\cg_0^\ez(\bz, \gz)$.

In what follows, we define
$$\cgom\equiv\lf\{f\in\cgm:\,\int_\cx f(x)\,d\mu(x)=0\r\}.$$
The space $\ocg_0^\ez(\bz,\gz)$
is defined to be the completion
of the space $\ocg(\ez,\ez)$ in $\ocg(\bz,\gz)$ when
$\bz,\ \gz\in(0,\ez]$. Moreover, if $f\in\ocg_0^\ez(\bz,\gz)$,
we then define $\|f\|_{\ocg_0^\ez(\bz,\gz)}=\|f\|_{\cg(\bz,\gz)}$.

One of the main targets of this paper
is to show that spaces  of
test functions are independent of the choices of $\ez\in (0,1)$ via
the continuous Calder\'on reproducing formulae.

\begin{thm}\label{t1.1} Let $\ez,\ \wz\ez\in (0,1)$ and
$0<\bz,\ \gz<(\ez\wg\wez)$. Then
$\cg^\ez_0(\bz,\gz)=\cg^\wez_0(\bz,\gz)$ and
$\ocg^\ez_0(\bz,\gz)=\ocg^\wez_0(\bz,\gz)$.
\end{thm}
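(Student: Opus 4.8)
The plan is to show, for $\ez,\wez\in(0,1)$ with $0<\bz,\gz<\ez\wg\wez$, that the identity map is an isomorphism between $\cg_0^\ez(\bz,\gz)$ and $\cg_0^\wez(\bz,\gz)$ (and similarly in the subspace $\ocg_0^\cdot(\bz,\gz)$ case). Since both spaces are, by definition, completions inside the common ambient Banach space $\cg(\bz,\gz)$, it suffices to prove that the two completions are the same subspace of $\cg(\bz,\gz)$ with the same norm; and because the norm on either completion is inherited from $\cg(\bz,\gz)$, only the identity of the underlying sets is at issue. By symmetry we may assume $\ez\le\wez$. One inclusion is then essentially immediate: $\cg(\wez,\wez)\hookrightarrow\cg(\ez,\ez)$ continuously when $\ez\le\wez$ (a test function of the higher regularity type is in particular one of the lower type, after adjusting the constant), so $\cg_0^\wez(\bz,\gz)\subset\cg_0^\ez(\bz,\gz)$. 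The content of the theorem is the reverse inclusion: every $\vz\in\cg(\ez,\ez)$ can be approximated in the $\cg(\bz,\gz)$-norm by elements of $\cg(\wez,\wez)$.

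The key tool will be the continuous Calder\'on reproducing formula mentioned right before the statement: for a suitable approximation-to-the-identity $\{S_k\}_{k\in\zz}$ (or an inhomogeneous analogue) built from kernels of high regularity, one can write $\vz=\sum_k D_k\vz$ with convergence in $\cg(\bz,\gz)$ and, crucially, with each partial sum $\sum_{|k|\le N} D_k\vz$ lying in $\cg(\wez,\wez)$ — indeed the kernels $D_k$ have regularity exponent as close to $1$ as we like, in particular $\ge\wez$, and a finite sum of functions of type $(x_1,1,\wez,\gamma')$ for large $\gamma'$ remains of that type (this uses the doubling property and the standard almost-orthogonality estimates to control the geometric pieces). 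Thus I would: (1) fix the reproducing formula with kernel regularity $\tz\in(\wez,1)$; (2) show $\sum_{|k|\le N}D_k\vz\in\cg(\wez,\wez)$ for each $N$, checking the size and H\"older conditions of Definition \ref{d1.2} directly from the kernel bounds; (3) show $\|\vz-\sum_{|k|\le N}D_k\vz\|_{\cg(\bz,\gz)}\to0$ as $N\to\fz$, which is where one exploits that $\bz,\gz$ are \emph{strictly} smaller than $\ez$, giving a small positive gap $\ez-\bz>0$ and $\ez-\gz>0$ that furnishes the summable geometric decay $2^{-|k|(\ez-\bz)\wg\cdots}$ in both the high-frequency ($k\to+\fz$) and low-frequency ($k\to-\fz$) regimes. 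This yields $\vz\in\cg_0^\wez(\bz,\gz)$, hence $\cg(\ez,\ez)\subset\cg_0^\wez(\bz,\gz)$, and taking completions gives $\cg_0^\ez(\bz,\gz)\subset\cg_0^\wez(\bz,\gz)$.

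For the homogeneous/vanishing-moment version $\ocg_0^\ez(\bz,\gz)=\ocg_0^\wez(\bz,\gz)$, the argument is parallel but one must use a reproducing formula whose kernels $D_k$ themselves have the cancellation property $\int_\cx D_k(x,y)\,d\mu(x)=0$, so that each partial sum $\sum_{|k|\le N}D_k\vz$ lies in $\ocg(\wez,\wez)$ rather than merely $\cg(\wez,\wez)$; the size, smoothness, and convergence estimates are then identical to the ones above, now carried out inside the subspace of functions with vanishing integral.

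The main obstacle I anticipate is step (3): establishing that the tail $\vz-\sum_{|k|\le N}D_k\vz$ is small specifically in the \emph{norm} of $\cg(\bz,\gz)$ — that is, verifying the H\"older-continuity clause (ii) of Definition \ref{d1.2} for the tail with a constant tending to $0$. Size estimates are routine, but the smoothness estimate for $D_k\vz$ requires splitting into the cases $d(x,y)$ small versus comparable to $2^{-k}$, and summing the resulting bounds over $|k|>N$ demands that the regularity surplus $\tz-\bz>0$ (and $\ez-\bz>0$ coming from $\vz$) be genuinely positive; keeping careful track of which exponent is used where — the kernel's $\tz$, the test function's $\ez$, and the target $\bz$ — so that every geometric series converges, is the delicate bookkeeping at the heart of the proof. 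This is exactly the kind of estimate already developed in \cite{hmy1,hmy2} for the Calder\'on reproducing formulae, so I would invoke those kernel bounds and almost-orthogonality lemmas rather than rederive them.
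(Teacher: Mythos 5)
The broad strategy is right — use a Calder\'on reproducing formula to exhibit each element of $\cg(\ez,\ez)$ as a $\cg(\bz,\gz)$-limit of partial sums with higher regularity — but there is a genuine gap at your step (2), and it is not a matter of careful bookkeeping: the claim that the partial sums lie in $\cg(\wez,\wez)$ is false as you have set it up. Membership in $\cg(\wez,\wez)$ requires both regularity exponent $\wez$ \emph{and} decay exponent $\wez$ in Definition \ref{d1.2}. The regularity of $D_k\vz$ (or $\wz D_kD_k\vz$) is indeed inherited from the kernel and can be pushed to any $\tz<1$. But the \emph{decay} of $D_k\vz$ is limited by the decay of $\vz$ itself: $D_k$ has bounded support, so $D_k\vz(x)$ decays at the rate of $\vz(x)$, namely $\ez$, and a composition with $\wz D_k$ (whose kernel decays at rate $\ezp>\wez$) still only yields decay $\min(\ezp,\ez)=\ez<\wez$. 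So for a generic $\vz\in\cg(\ez,\ez)$ no finite partial sum of the reproducing series lies in $\cg(\wez,\wez)$, and step (3) never even gets started.

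The paper's proof handles this by inserting a truncation step before invoking the reproducing formula. One first replaces $\psi\in\cg(\ez,\ez)$ by $\psi_N(x)\equiv\psi(x)\phi(d(x,x_1)/N)$, a compactly supported cutoff, and checks directly that $\|\psi-\psi_N\|_{\cg(\bz,\gz)}\to0$ — here the strict gaps $\ez-\bz>0$, $\ez-\gz>0$ are what give the decay in $N$, which is the role you assigned to them but at a different place. Because $\psi_N$ has bounded support, $\psi_N\in\cg(\ez,\ezp)$ for \emph{every} $\ezp>0$; hence $D_k(\psi_N)$ (also compactly supported, since the kernels have bounded support) can be estimated with arbitrarily large decay exponent, and the partial sums $\sum_{k=0}^{L}\wz D_kD_k(\psi_N)$ inherit the $\wez$-decay from $\wz D_k$ and so genuinely lie in $\cg(\wez,\wez)$. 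The convergence of the partial sums to $\psi_N$ in $\cg(\bz,\gz)$ is then free from Lemma \ref{l3.2}. Finally, for the $\ocg$ case there is a second wrinkle you do not anticipate: the cutoff destroys the vanishing-moment condition, so $\psi_N$ must be corrected to $g_N(x)\equiv\psi_N(x)-\bigl(\int_\cx\psi_N\,d\mu\bigr)S_0(x,x_1)$ before applying the homogeneous reproducing formula, and one then verifies $\|\psi-g_N\|_{\cg(\bz,\gz)}\to0$ as well. Without both of these preparatory steps, the chain of inclusions you are aiming for cannot be closed.
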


The proof of Theorem \ref{t1.1} will be given in Section \ref{s3}.

Based on the above spaces of test functions, the homogeneous
Besov spaces $\db$ and Triebel-Lizorkin
spaces $\df$, and the inhomogeneous Besov spaces $\b$ and Triebel-Lizorkin
spaces $\f$ on RD-spaces were introduced in \cite{hmy2};
see also Definitions \ref{d4.1} through \ref{d4.3} below.
As a corollary of Theorem \ref{t1.1}, we see that
the Besov and Triebel-Lizorkin spaces are independent of
the regularity of the underlying distribution space; see Corollary \ref{c4.1} below.

Recall that the local Hardy space $h^p(\cx)$ with $p\in(n/(n+1),\,\fz)$
is just the inhomogeneous Triebel-Lizorkin space $F^0_{p,\,2}(\cx)$;
see \cite[Theorem 5.42]{hmy2}.

Recently, Koskela and Saksman \cite{ks08}
characterized the Hardy-Sobolev space by using some
Haj\l asz-Sobolev space on $\rn$.
Motivated by this, as another main target of this paper,
in Theorem \ref{t1.2} below, we characterize
the norms of inhomogeneous Besov spaces and Triebel-Lizorkin spaces
by the norms of homogeneous Besov and Triebel-Lizorkin spaces
together with the norm of local Hardy spaces in the sense of
Goldberg (see \cite{g}).

\begin{thm}\label{t1.2}
Let $\mu(\cx)=\fz$ and $\cx$ have the ``dimension" $n$
as in \eqref{2.2} below.
Let $s\in(0,1)$ and $p\in (n/(n+s), \fz)$. Let
$\{S_k\}_{k\in\zz}$ be an approximation of the identity of order $1$
with bounded support as in Definition \ref{d3.1} and
set $D_k\equiv S_k-S_{k-1}$ for all $k\in\zz$.

(i) If $q\in (0,\fz]$, then $f\in\b$ if and only if $f\in\sh$ and
\begin{equation}\label{1.2}
    \lf\{\dsum^\fz_{k=-\fz}2^{ksq}\|D_k(f)\|_\lp^q\r\}^{1/q}\equiv J_1<\fz;
\end{equation}
 moreover, $\|f\|_\b$ is equivalent to $\|f\|_{h^p(\cx)}+J_1.$

(ii) If $q\in (n/(n+s), \fz]$, then $f\in\f$ if and only if
$f\in\sh$ and
\begin{equation*} \lf\|\lf\{\dsum^\fz_{k=-\fz}2^{ksq}
|D_k(f)|^q\r\}^{1/q}\r\|_\lp\equiv J_2<\fz;
\end{equation*}
 moreover, $\|f\|_\f$ is equivalent to $\|f\|_{h^p(\cx)}+J_2.$
\end{thm}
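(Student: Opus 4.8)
The plan is to prove the two equivalences by comparing the Littlewood--Paley-type decompositions $\{D_k\}_{k\in\zz}$ built from the approximation of the identity with the norms defining $\b$, $\f$, $\db$, $\df$ and $\sh$. Recall first that, by \cite[Theorem 5.42]{hmy2}, $\sh=F^0_{p,\,2}(\cx)$, and more generally the inhomogeneous spaces $\b$ and $\f$ admit, via the inhomogeneous continuous Calder\'on reproducing formula on RD-spaces, a characterization in terms of $\{D_k\}_{k\in\nn}$ together with the single low-frequency term $S_0(f)$: namely $\|f\|_{\b}\sim \|S_0(f)\|_\lp+\{\sum_{k=1}^\fz 2^{ksq}\|D_k(f)\|_\lp^q\}^{1/q}$, and similarly for $\f$. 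So the whole content of the theorem is the replacement of the term $\|S_0(f)\|_\lp$ (plus the missing negative-$k$ tail $\sum_{k\le 0}$) by the local Hardy norm $\|f\|_{\sh}$. Thus I would split the argument into the ``positive frequencies agree'' part (which is essentially \cite{hmy2} plus the independence-of-$\ez$ result, Theorem~\ref{t1.1}, to make sense of the distribution space) and the ``low frequencies $\leftrightarrow$ local Hardy space'' part, which is the real work.

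For the forward direction of (i), suppose $f\in\b$. Then $f\in(\ocg_0^\ez(\bz,\gz))'$ with the appropriate restrictions, $S_0(f)\in\lp$, and the positive-$k$ piece is controlled. To get $f\in\sh$ I would use that $\sh=F^0_{p,\,2}(\cx)$ and the atomic/molecular or grand-maximal-function characterization of $h^p$ from \cite{hmy2}: write $f=S_0(f)+\sum_{k\ge1}D_k(f)$; the first term lies in $\lp\subset\sh$ for $p\ge1$ and needs a separate small-scale smoothing argument for $p<1$ (here one uses that $S_0(f)$, being $S_0$ applied to a distribution, is actually a nice function with the required decay, so a local maximal function estimate applies), and the tail $\sum_{k\ge1}D_k(f)$ is handled by the embedding $B^s_{p,\,q}\hookrightarrow F^0_{p,\,2}=\sh$ valid for $s>0$ and $p>n/(n+s)$, which in turn follows from the square-function/maximal-function estimates already available on RD-spaces. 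Then, to produce the full bi-infinite sum \eqref{1.2}, I must estimate $\sum_{k\le 0}2^{ksq}\|D_k(f)\|_\lp^q$: since $D_k=S_k-S_{k-1}$ and $\{S_k\}$ is an approximation of the identity of order $1$, for $k\le0$ one has $D_k(f)=D_k(S_0 f)+D_k(\sum_{j\ge1}D_j f)$, and using the cancellation of $D_k$ together with the regularity of $S_0f$ and of the high-frequency part, $\|D_k(f)\|_\lp\ls 2^{k}\|\,\text{something in }\sh\,\|$ (gaining a factor $2^k$ from the order-$1$ cancellation of $D_k$ hitting a function that is, at scale $2^{-k}\ge1$, essentially constant-like relative to the local Hardy data). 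Because $s>0$, $\sum_{k\le0}2^{ksq}2^{kq}<\fz$, so this tail converges and is dominated by $C\|f\|_{\sh}^q$ — this is exactly the place where the hypothesis $s\in(0,1)$ and the order-$1$ assumption on $\{S_k\}$ are used.

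For the converse direction of (i), suppose $f\in\sh$ and $J_1<\fz$. I must show $f$ is a well-defined element of the inhomogeneous distribution space and that $\|f\|_\b\ls\|f\|_\sh+J_1$. Since $h^p(\cx)\subset (\ocg_0^\ez(\bz,\gz))'$ (or into the inhomogeneous test-space dual after adding back a constant component), $f$ makes sense; then $\|S_0(f)\|_\lp\ls\|f\|_\sh$ by boundedness of the local smoothing operator $S_0$ on $h^p$ (an approximation of the identity with bounded support is bounded on $h^p$), and the positive-$k$ Besov sum is $\le J_1$. Conversely the tail $\sum_{k\le 0}$ is part of $J_1$ but not of the inhomogeneous norm, so no obstruction there. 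Part (ii) is proved the same way with $\lp$-norms of $\ell^q$-sums in place of $\ell^q$-norms of $\lp$-norms: one replaces the Besov embeddings by the Triebel--Lizorkin ones, invokes the Fefferman--Stein vector-valued maximal inequality on RD-spaces (valid for $p,q>n/(n+1)$, hence the range $q>n/(n+s)$ is needed to absorb the loss coming from the low-frequency estimates combined with $s<1$), and the low-frequency $\ell^q$-tail is summed pointwise using the same $2^k$-gain as above, now inside the $\lp$-norm.

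The main obstacle I anticipate is the low-frequency analysis for $p\le 1$ (and, in part (ii), for $q\le 1$): controlling $\|D_k(f)\|_\lp$ for $k\le0$ and, more delicately, controlling $\|S_0(f)\|_\lp$ and $\|f-S_0(f)\|_\sh$ by $\|f\|_\sh$ when $p<1$, since then $\lp$ is not normable and one cannot freely split sums. The fix will be to work with the grand maximal function characterization of $h^p(\cx)$ from \cite{hmy2} rather than with $\lp$-norms directly: estimate the relevant pieces by pointwise maximal-function bounds and only at the end take $\lp$-quasinorms, using the boundedness of the Hardy--Littlewood maximal operator (and its vector-valued analogue for (ii)) on $L^p$ for $p>n/(n+1)$ together with $s>0$ to ensure all the geometric series in $k\le0$ actually converge. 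A secondary technical point is the precise identification, via Theorem~\ref{t1.1}, that all the distribution spaces $(\ocg_0^\ez)'$ and $(\cg_0^\ez)'$ appearing here coincide, so that the statement ``$f\in\b$'' is unambiguous and the Calder\'on reproducing formulae from \cite{hmy2} may be applied regardless of the regularity index $\ez$ used to define the $S_k$.
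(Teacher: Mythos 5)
Your overall strategy is in the same spirit as the paper's: both proofs reduce the theorem to controlling the negative-frequency tail $Z_1\equiv\{\sum_{k\le 0}2^{ksq}\|D_k f\|_\lp^q\}^{1/q}$ (and its $\ell^q$-inside-$L^p$ analogue for (ii)) by combining a reproducing-formula decomposition with almost-orthogonality estimates and the hypothesis $s>0$. The easy ``positive frequency'' comparisons $\|f\|_{h^p}\ls\|f\|_\b$ and $\|f\|_\b\ls\|f\|_{h^p}+J_1$ are handled by both routes using $h^p=F^0_{p,2}$ and elementary $\ell^p$/H\"older inequalities; there you are aligned with the paper.

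Where you diverge is in how you treat $Z_1$. You propose the telescoping splitting $f=S_0 f+\sum_{j\ge1}D_j f$ and a direct almost-orthogonality estimate $\|D_kD_j\|\ls 2^{-|k-j|\ez'}$, aiming for the sharper bound $Z_1\ls\|f\|_{h^p}$. The paper instead inserts the inhomogeneous \emph{discrete} Calder\'on reproducing formula (Lemma~\ref{l4.2}) and splits $D_k(f)=I_1+I_2$, where $I_1$ carries the low-frequency data $\{m_{Q^{0,\nu'}_{\tau'}}(|S_0 f|)\}$ and $I_2$ the high-frequency data $\{D_{k'}(f)(y^{k',\nu'}_{\tau'})\}_{k'\ge1}$; it then uses the kernel almost-orthogonality of \cite[Lemma 3.2]{hmy2} together with \cite[Lemmas 5.2, 5.3, 3.14]{hmy2} (maximal-function estimates adapted to dyadic decompositions with $r<\min\{p,q\}$, $r>n/(n+\ez')$) and obtains $Z_1\ls\|f\|_\b$, not $\|f\|_{h^p}$. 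The discrete route is what makes the $p<1$ and $q<1$ cases go through cleanly, since the estimates come out as pointwise bounds by $[M(\cdot)]^{1/r}$ before any quasi-norm is taken. Your bound $Z_1\ls\|f\|_{h^p}$ is plausibly true (using $\|D_j f\|_\lp\le\|\{\sum_{j}|D_jf|^2\}^{1/2}\|_\lp\ls\|f\|_{h^p}$ and the $p$-triangle inequality), but it is a stronger claim than the theorem needs and is not established in \cite{hmy2}; you would need to carry out the same maximal-function machinery as the paper does anyway, and you would also need to justify convergence of $\sum_{j\ge1}D_jf$ in the appropriate distribution space and the interchange of $D_k$ with that sum.

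Two further imprecisions worth flagging. First, the claimed gain ``$2^k$'' from the order-$1$ cancellation is really $2^{k\ez'}$ for some $\ez'\in(\ez,1)$ (the kernels are H\"older of exponent $\ez'<1$, not Lipschitz-$1$); this is harmless since $s<\ez<\ez'$, but one should say so. Second, and more substantively, in the low-frequency contribution the paper's estimate for $I_1$ does \emph{not} exploit cancellation of $D_k$ at all: the size estimate on $D_k\wdo$ plus \cite[Lemma 5.3]{hmy2} produces a factor $2^{kn(1-1/r)}$ whose sign is favourable only because $s>n(1/p-1)$, i.e., $p>n/(n+s)$. This shows that the condition $p>n/(n+s)$ is used in an essential way through the dyadic-cube averaging, and a cruder ``cancellation gives $2^{k\ez'}$'' argument applied to $m_{Q^{0,\nu'}_{\tau'}}(|S_0 f|)$ would still have to convert a discrete sum over cubes into an $\lp$-quasinorm — this is precisely where the loss $2^{kn(1/r-1)}$ appears and where your heuristic underestimates the work required. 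Finally, you suggest the grand maximal function characterization of $h^p$; the paper works exclusively through $h^p=F^0_{p,2}$ and square functions, which is smoother here because the low-frequency data in $\|f\|_\b$ and $\|f\|_{h^p}$ literally coincide.
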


The proof of Theorem \ref{t1.2} is given in Section \ref{s4}.

\begin{rem}\label{r1.1}
(i) It is known that when $p\in (1,\fz)$, $h^p(\cx)=\lp$ and
when $p\in (n/(n+1), 1]$, $(\sh\cap L^p_\loc(\cx))\subsetneq\lp;$
see \cite{hmy2}. Thus, when $p\in
(n/(n+1), 1]$, even for the Euclidean space $\rn$, Theorem
\ref{t1.2} is also an improvement of the known classical results;
see \cite[Theorem 2.3.3]{t92}.

(ii) Theorem \ref{t1.2} with $p\in [1,\fz]$ and $\sh$ replaced by
$\lp$ was obtained in \cite[Proposition 5.39]{hmy2}. However,
differently from the Euclidean space (see \cite[Theorem
2.3.3]{t92}), it is unclear if Theorem \ref{t1.2} is still true if
$\sh$ is replaced by $\lp$ when $p\in(n/(n+s), 1)$.

(iii) When $p=\fz$, it is known that $F^0_{p,2}(\cx)=\sbmo(\cx)$;
see \cite[Theorem 6.28]{hmy2}. In this case, if we replace $\sh$ by
$\sbmo(\cx)$ in Theorem \ref{t1.2}, all conclusions of Theorem
\ref{t1.2} are still true. This can be deduced from the
corresponding conclusions described in (ii) of this remark and an
easy argument as in the proof of Theorem \ref{t1.2}.

(iv) Usually, it makes no sense to write the conclusions of
Theorem \ref{t1.2} into $\b=(\sh\cap\db)$ and $\f=(\sh\cap\df)$,
since homogeneous and inhomogeneous spaces are defined via different
kinds of spaces of distributions, which was pointed out by Professor
Hans Triebel to the first author.
\end{rem}

In Section \ref{s5} of this paper, via the discrete Calder\'on
reproducing formulae in \cite{hmy2}, we study the locally
integrability of elements in Besov and
Triebel-Lizorkin spaces; see Propositions \ref{p5.2} and
\ref{p5.3} below. Such results on $\rn$ were obtained by Sickel and
Triebel \cite{st}. However, the method used in \cite{st} is not
valid for RD-spaces $\cx$, since there exists none
counterpart on $\cx$ of the embedding theorems for different metrics
as in Triebel \cite[p.\,129]{t83} on $\rn$; see the proof of
\cite[Theorem 3.3.2]{st}.

The results in this paper apply in a wide range of settings,
for instance, to Ahlfors $n$-regular metric measure spaces
(see \cite{hei}), $d$-spaces (see \cite{t06}),
Lie groups of polynomial volume growth (see
\cite{vsc, nsw, a}), (compact) Carnot-Carath\'eodory
(also called sub-Riemannian) manifolds
with doubling measures (see \cite{nsw,hk}) and
to boundaries of certain unbounded model domains of polynomial type in $\cc^N$
appearing in the work of Nagel and Stein (see \cite{ns06,nsw}).

\begin{rem}
We point out that in the original definition of spaces of homogeneous type
in \cite{cw1,cw2}, $d$ is only assumed to be a quasi-metric instead of a metric.
As pointed out by \cite[Remark 1.4]{hmy2}, all the results in this paper
are still true for quasi-metrics having some regularity.
Moreover, Mac\'ias and Segovia \cite{ms79a} proved that
if $d$ is a quasi-metric, then there exists
a quasi-metric $\wz d$, which is equivalent
to $d$ and has some regularity.
Thus, all the results of this paper
are still true if $d$ is only known to be a quasi-metric
since they are invariant under equivalent
quasi-metrics.
\end{rem}

Finally, we state some conventions. Throughout the paper,
we denote by $C$ a positive
constant which is independent
of the main parameters, but it may vary from line to line.
Constants with subscripts, such as $C_0$, do not change
in different occurrences. The symbol $A\ls B$ or $B\gs A$
means that $A\le CB$. If $A\ls B$ and $B\ls A$, we then
write $A\sim B$. For any
$p\in [1,\fz]$, we denote by $p'$ its conjugate index,
namely, $1/p+1/p'=1$.  If $E$ is a subset of $\cx$,
we denote by $\chi_E$ the characteristic
function of $E$ and define
$\diam E\equiv\sup_{x,\,y\in E}d(x,y).$
We also set $\nn\equiv\{1,\, 2,\, \cdots\}$ and $\zz_+\equiv\nn\cup\{0\}$.
For any $a,\, b\in\rr$, we denote $\min\{a,\, b\}$, $\max\{a,\, b\}$,
and $\max\{a,\, 0\}$
by $a\wg b$, $a\vee b$ and $a_+$, respectively.
For any measurable set $E$ and locally integrable function $f$,
we set $m_E(f)\equiv\frac1{\mu(E)}\int_Ef(x)\,d\mu(x)$.

\section{Characterizations of RD-spaces\label{s2}}

\hskip\parindent In this section, we establish several equivalent characterizations of RD-spaces
in Proposition \ref{p2.1} below, which should be useful in applications.

\begin{prop}\label{p2.1}
The following statements are equivalent.

(i) The triple $(\cx,\,d,\,\mu)$ is an RD-space.

(ii) The triple $(\cx,\,d,\,\mu)$ is a space of homogeneous type and
there exists a constant $a_0>1$
such that for all $x\in\cx$ and $0<r<\diam(\cx)/a_0$,
\begin{equation}\label{2.1}
 B(x, a_0r)\setminus  B(x, r)\ne\emptyset\ (geometrical\ property).
\end{equation}

(iii) The triple $(\cx, d, \mu)$ is an $(\kz, n)$-space for some
$0<\kz\le n$, which means that there exist constants $C_1\ge 1$ and $0<C_2\le1$
such that for all $x\in\cx$,
$0<r<2\diam(\cx) $ and $1\le\lz<2\diam(\cx)/r$,
\begin{equation}\label{2.2}
\mu(B(x, \lz r))\le C_1\lz^n\mu(B(x, r))
\end{equation}
and
\begin{equation}\label{2.3}
\mu(B(x, \lz r))\ge C_2\lz^\kz\mu(B(x, r)).
\end{equation}
\end{prop}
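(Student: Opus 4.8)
The plan is to establish the cycle of implications (i) $\Rightarrow$ (iii) $\Rightarrow$ (ii) $\Rightarrow$ (i), since (iii) is the strongest-looking statement and (ii) the weakest. The implication (iii) $\Rightarrow$ (ii) is essentially immediate: given the lower bound \eqref{2.3} with $\lz = C_2^{-1/\kz}\cdot 2^{1/\kz}$ (or any $\lz$ large enough that $C_2\lz^\kz > 1$), we get $\mu(B(x,\lz r)) > \mu(B(x,r))$ whenever $\lz r < 2\diam(\cx)$, forcing $B(x,\lz r)\setminus B(x,r)\ne\emptyset$; a small adjustment of the constant handles the range constraint, so \eqref{2.1} holds with $a_0 = \lz$. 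For (ii) $\Rightarrow$ (i), the idea is that the geometrical property \eqref{2.1} plus doubling recovers the reverse doubling property: pick $z\in B(x,a_0r)\setminus B(x,r)$, so $d(x,z)\in[r,a_0r)$; then $B(z, r/(2a_0))\subset B(x, a_0 r + r/(2a_0)) \subset B(x, 2a_0 r)$ while $B(z,r/(2a_0))$ is disjoint from... one must be a bit careful, but the standard argument is that $B(z, r/2)$ is contained in $B(x, (a_0+1/2)r)$ and, comparing measures via doubling on the ball around $z$, one obtains $\mu(B(x, cr)) \ge (1+\eta)\mu(B(x,r))$ for suitable constants $c>1$ and $\eta>0$ depending only on $C_0$ and $a_0$; iterating gives the reverse doubling inequality in the form stated after Definition \ref{d1.1}, hence (i).

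The substantive step is (i) $\Rightarrow$ (iii). The upper bound \eqref{2.2} is just iteration of the doubling property \eqref{1.1}: for $1\le\lz<2\diam(\cx)/r$ write $\lz\le 2^k$ with $2^{k-1}<\lz$, so $\mu(B(x,\lz r))\le\mu(B(x,2^k r))\le C_0^k\mu(B(x,r))\le C_1\lz^n\mu(B(x,r))$ with $n = \log_2 C_0$ and $C_1 = C_0$. For the lower bound \eqref{2.3}, I would use the reverse doubling constant $\wz a_0$ from the reformulation: $\mu(B(x,\wz a_0 \rho))\ge 2\mu(B(x,\rho))$ whenever $\rho<\diam(\cx)/\wz a_0$. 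Given $1\le\lz<2\diam(\cx)/r$, iterate this $m$ times, where $m$ is chosen so that $\wz a_0^{m}\le\lz<\wz a_0^{m+1}$, obtaining $\mu(B(x,\lz r))\ge\mu(B(x,\wz a_0^m r))\ge 2^m\mu(B(x,r))\ge C_2\lz^\kz\mu(B(x,r))$ with $\kz = \log_{\wz a_0} 2$ and $C_2 = 2^{-1}$, where one must check that each intermediate radius $\wz a_0^j r$ (for $j<m$) stays below $\diam(\cx)/\wz a_0$ so that reverse doubling applies — this follows because $\wz a_0^{m} r\le\lz r<2\diam(\cx)$, so $\wz a_0^{m-1} r< 2\diam(\cx)/\wz a_0$, and if $\diam(\cx)=\fz$ there is nothing to check. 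Finally one notes $\kz\le n$ since $2\le C_0$ forces $\log_{\wz a_0}2\le\log_{\wz a_0}C_0$... more directly, both exponents can be compared after arranging the constants, or one simply observes that \eqref{2.2} and \eqref{2.3} together with $\lz\ge 1$ force $C_2\lz^\kz\le C_1\lz^n$ for all admissible $\lz$, which gives $\kz\le n$.

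The main obstacle is bookkeeping near the scale $\diam(\cx)$: the inequalities \eqref{2.2} and \eqref{2.3} are asserted only for $r<2\diam(\cx)$ and $\lz<2\diam(\cx)/r$, while the reverse doubling hypothesis is available only for radii below $\diam(\cx)/\wz a_0$, so the iteration in (i) $\Rightarrow$ (iii) must stop at the right place and the leftover factor between $\wz a_0^m r$ and $\lz r$ (a bounded multiplicative gap, at most $\wz a_0$) must be absorbed into $C_2$. The case $\diam(\cx)=\fz$ is clean; the bounded-diameter case requires only that one verify the largest radius used never exceeds the allowed threshold, which the stated range $\lz<2\diam(\cx)/r$ is exactly designed to permit after adjusting constants. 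None of this is deep, but it is where care is needed.
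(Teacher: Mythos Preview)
Your proposal is correct and rests on the same ingredients as the paper: iteration of doubling and of reverse doubling to produce the power-type bounds \eqref{2.2}--\eqref{2.3}, and the disjoint-balls-plus-doubling trick to pass between the geometric property \eqref{2.1} and reverse doubling. The paper packages these into separate Lemmas 2.1--2.4 (introducing a floating threshold parameter $\ez_0,\ez_1$ that absorbs exactly the endpoint bookkeeping near $\diam(\cx)$ you flag as the main obstacle) and then reads off the equivalences, whereas you run the direct cycle (i)$\Rightarrow$(iii)$\Rightarrow$(ii)$\Rightarrow$(i); the substance is the same. One small correction to your sketch of (ii)$\Rightarrow$(i): from $z\in B(x,a_0r)\setminus B(x,r)$ you only get $B(z,r/2)\cap B(x,r/2)=\emptyset$, which gives $\mu(B(x,(a_0+\tfrac12)r))\ge(1+\eta)\mu(B(x,r/2))$ rather than $(1+\eta)\mu(B(x,r))$ --- either substitute $\rho=r/2$ at the end, or do as the paper does in Lemma~2.4(ii) and apply \eqref{2.1} at radius $2r$ to pick $y\in B(x,2a_0r)\setminus B(x,2r)$, so that $B(y,r)\cap B(x,r)=\emptyset$ directly.
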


Recall that a metric space satisfying \eqref{2.1} is usually called uniformly perfect;
see, for example, \cite[p.\,88]{hei}. Moreover, parts of the proof of
Proposition \ref{p2.1} can be found in \cite{hei},
for example, the proof of ``(ii) implies (iii)''
in Proposition \ref{p2.1} is just \cite[p.\,31,\, (4.6) and
p.\,102,\,Exercise 13.1]{hei}.
(We thank the referees to point out these facts to us.)
However, for the convenience of readers and its importance in applications,
we would like to give a detailed proof of Proposition \ref{p2.1}.
Indeed,  we conclude
Proposition \ref{p2.1} from Lemmas \ref{l2.1}
through \ref{l2.4} below.

\begin{lem} \label{l2.1}
The following statements are equivalent.

(D1) There exist constants $C_1 \ge 1$, $\ez_0>1$ and $n>0$ such that
\eqref{2.2} holds for all $x\in\cx$,
$0<r<\ez_0\diam(\cx)$ and $1\le\lz<\ez_0\diam(\cx)/r$.

(D2) There exist constants $C_1\ge 1$, $\ez_0>1$ and $n>0$ such that
\eqref{2.2} holds for all $x\in\cx$,
$0<r<\ez_0\diam(\cx)$ and $\lz\ge1$.

(D3) There exist constants $C_1\ge 1$ and $n>0$ such that
\eqref{2.2} holds for all $x\in\cx$, $r>0$ and $\lz\ge1$.

(D4) $(\cx, \,d,\mu)$ is a space of homogeneous type.
\end{lem}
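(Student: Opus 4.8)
The plan is to establish the cycle of implications $(D4)\Rightarrow(D3)\Rightarrow(D2)\Rightarrow(D1)\Rightarrow(D4)$, of which only the first and last are substantial; the middle two are trivial weakenings of the hypotheses (with $\ez_0$ playing a dummy role once $r$ ranges over all of $(0,\fz)$). So I would dispatch $(D3)\Rightarrow(D2)\Rightarrow(D1)$ in a single sentence: $(D3)$ gives \eqref{2.2} for all $x$, all $r>0$ and all $\lz\ge1$, which in particular yields $(D2)$ for any choice of $\ez_0>1$, and $(D2)$ trivially restricts to $(D1)$.

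For $(D4)\Rightarrow(D3)$, the idea is the standard iteration of the doubling inequality \eqref{1.1}. Given $x\in\cx$, $r>0$ and $\lz\ge1$, choose the integer $m\ge0$ with $2^{m-1}\le\lz<2^m$ (or $m=0$ if $\lz=1$), so that $\lz r<2^m r$. Then monotonicity of $\mu$ in the radius plus $m$ applications of \eqref{1.1} give
\[
\mu(B(x,\lz r))\le\mu(B(x,2^m r))\le C_0^m\,\mu(B(x,r)).
\]
It remains to absorb $C_0^m$ into a power of $\lz$: since $2^{m-1}\le\lz$, we have $2^m\le 2\lz$, hence $m\le 1+\log_2\lz$ and $C_0^m\le C_0\,C_0^{\log_2\lz}=C_0\,\lz^{\log_2 C_0}$. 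Thus \eqref{2.2} holds with $C_1\equiv C_0$ and $n\equiv\log_2 C_0>0$; note $n>0$ because $C_0>1$. This gives $(D3)$, with $n$ the "upper dimension'' determined by the doubling constant.

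For $(D1)\Rightarrow(D4)$, I would simply instantiate the hypothesis of $(D1)$ at $\lz=2$: for every $x\in\cx$ and every $r>0$ with $0<r<\ez_0\diam(\cx)$ and $2<\ez_0\diam(\cx)/r$, i.e. $0<r<\ez_0\diam(\cx)/2$, we get $\mu(B(x,2r))\le C_1 2^n\mu(B(x,r))$, which is \eqref{1.1} with $C_0\equiv C_1 2^n$ for all such $r$. For the remaining radii $r\ge\ez_0\diam(\cx)/2$ the doubling inequality is automatic with the same (or a larger) constant, since $B(x,2r)\subset B(x,2r)$ and in fact $B(x,r)$ already has positive measure while $\mu(\cx)<\fz$ is not assumed, so one argues instead that for $r\ge \diam(\cx)$ one has $B(x,2r)=B(x,r)=\cx$ and \eqref{1.1} is trivial, and for $\ez_0\diam(\cx)/2\le r<\diam(\cx)$ one covers the gap by a single further iteration from a radius below the threshold (or absorbs it into the constant using that there are only boundedly many dyadic steps between $\ez_0\diam(\cx)/2$ and $\diam(\cx)$). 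This closes the cycle.

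The only mildly delicate point — the place I would be most careful — is the bookkeeping near the scale $\diam(\cx)$ in $(D1)\Rightarrow(D4)$ when $\diam(\cx)<\fz$: one must check that the finitely many radii between $\ez_0\diam(\cx)/2$ and $\diam(\cx)$ do not spoil the uniform doubling constant, which they cannot since each step costs only a fixed factor and there are at most $\lceil\log_2\ez_0\rceil$ of them. Everything else is a routine geometric-series / dyadic-pigeonhole computation, and I would present it tersely.
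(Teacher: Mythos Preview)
Your cycle $(D4)\Rightarrow(D3)\Rightarrow(D2)\Rightarrow(D1)\Rightarrow(D4)$ is correct, and the $(D4)\Leftrightarrow(D3)$ part matches the paper exactly (same iteration, same constants $n=\log_2 C_0$, $C_1=C_0$). The paper organizes the remaining equivalences differently: when $\diam(\cx)<\fz$ it runs the chain among $(D1),(D2),(D3)$ in the \emph{opposite} direction, showing $(D1)\Rightarrow(D2)\Rightarrow(D3)$ (with $(D3)\Rightarrow(D1)$ trivial), and then passes to $(D4)$ via the restriction-free $(D3)\Rightarrow(D4)$ at $\lz=2$. This means the large-radius ``gap'' you flag never arises; the nontrivial extension is packed into $(D1)\Rightarrow(D2)$ instead, where for $\lz\ge\ez_0\diam(\cx)/r$ one has $\lz r>\diam(\cx)$, hence $B(x,\lz r)=\cx$, and one compares with any $\lz'\in(\diam(\cx)/r,\,\ez_0\diam(\cx)/r)$ still admissible in $(D1)$ (nonempty since $\ez_0>1$).

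Your direct route $(D1)\Rightarrow(D4)$ also works, but the sketch for the gap $r\in[\ez_0\diam(\cx)/2,\,\diam(\cx))$ is slightly off. Iterating the already-established doubling from a radius below the threshold bounds $\mu(B(x,r))$ in terms of $\mu(B(x,r/2))$, which is the wrong direction for what you need; and the dyadic count should be $\log_2(2/\ez_0)<1$ (since $\ez_0>1$), not $\lceil\log_2\ez_0\rceil$. The clean fix is the paper's observation transplanted: for such $r$ one has $2r>\diam(\cx)$, so $\mu(B(x,2r))=\mu(\cx)$; then apply $(D1)$ at some $r'\le r$ with $\lz'r'\in(\diam(\cx),\,\ez_0\diam(\cx))$ to get $\mu(\cx)\le C_1(\lz')^n\mu(B(x,r'))\le C_1(\lz')^n\mu(B(x,r))$, yielding doubling with constant $C_1(\lz')^n$.
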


\begin{proof}
If $\diam(\cx)=\fz$, then  (D1), (D2) and (D3) of Lemma \ref{l2.1}
are the same; otherwise, it is easy to see that
(D1) implies (D2), (D2) implies (D3) and (D3) implies (D1). Moreover,  (D3)
implies (D4) with the doubling constant $C_0\equiv C_12^n$ and (D4) implies
(D3) with $n\equiv\log_2C_0$ and $C_1\equiv C_0$, which completes the proof of Lemma \ref{l2.1}.
\end{proof}

\begin{lem}\label{l2.2}
The following statements are equivalent.

(RD1) There exist constants $\ez_1>0$, $0<C_2\le 1$  and $\kz>0$ such that
\eqref{2.3} holds for all $x\in\cx$,
$0<r<\ez_1\diam(\cx)$ and $1\le\lz<\ez_1\diam(\cx)/r$.

(RD2) For any $\ez_1>0$, there exist constants $0<C_2\le 1$ and
$\kz>0$ such that \eqref{2.3} holds for all $x\in\cx$,
$0<r<\ez_1\diam(\cx)$ and $1\le\lz<\ez_1\diam(\cx)/r$.

(RD3) There exist constants $\ez_1>0$, $\wz C_2>1$ and
$a_1\in(1,\,\fz)\cap[\ez_1,\,\fz)$ such that
 for all $x\in\cx$ and $0<r<\ez_1 \diam(\cx)/a_1$,
\begin{equation}\label{2.4}
\mu(B(x,\,a_1r))\ge \wz C_2\mu(B(x,\,r)).
\end{equation}

(RD4) For any $\ez_1>0$, there exist constants $\wz C_2>1$  and
$a_1\in(1,\,\fz)\cap[\ez_1,\,\fz)$ such that \eqref{2.4} holds for
all $x\in\cx$ and $0<r< \ez_1 \diam(\cx)/a_1$.
\end{lem}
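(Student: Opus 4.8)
The plan is to prove the cycle of implications (RD1) $\Rightarrow$ (RD3) $\Rightarrow$ (RD4) $\Rightarrow$ (RD2) $\Rightarrow$ (RD1), treating the cases $\diam(\cx)=\fz$ and $\diam(\cx)<\fz$ in parallel (when $\diam(\cx)=\fz$ the constraints ``$0<r<\ez_1\diam(\cx)$'' and ``$1\le\lz<\ez_1\diam(\cx)/r$'' become vacuous, so several of the statements literally coincide). The engine throughout is the elementary observation that the estimate \eqref{2.3}, namely $\mu(B(x,\lz r))\ge C_2\lz^\kz\mu(B(x,r))$, iterates: if \eqref{2.4} holds for the single dilation factor $a_1$ with gain $\wz C_2$, then for $\lz\in[a_1^{m},a_1^{m+1})$ one gets $\mu(B(x,\lz r))\ge \mu(B(x,a_1^{m}r))\ge \wz C_2^{\,m}\mu(B(x,r))\ge \wz C_2^{\,-1}\lz^{\kz}\mu(B(x,r))$ with $\kz\equiv\log_{a_1}\wz C_2>0$ and $C_2\equiv\wz C_2^{-1}\in(0,1]$, provided all intermediate radii stay inside the allowed range; conversely \eqref{2.3} with any $\lz=a_1>1$ large enough that $C_2 a_1^{\kz}>1$ yields \eqref{2.4}. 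So each implication is a matter of choosing the parameters and checking that the radius/dilation constraints propagate correctly.

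For (RD1) $\Rightarrow$ (RD3): given $\ez_1,C_2,\kz$, pick $a_1>1$ with $a_1\ge\ez_1$ and $C_2 a_1^{\kz}>1$ (enlarging $a_1$ only helps), and set $\wz C_2\equiv C_2 a_1^{\kz}$; then for $0<r<\ez_1\diam(\cx)/a_1$ we have $1\le a_1<\ez_1\diam(\cx)/r$, so \eqref{2.3} applies with $\lz=a_1$ and gives \eqref{2.4}. For (RD3) $\Rightarrow$ (RD4): starting from \eqref{2.4} for the particular pair $(\ez_1,a_1)$, and given an arbitrary $\ez_1'>0$, iterate \eqref{2.4} $m$ times to reach the dilation factor $a_1^{m}$; choosing $m$ with $a_1^{m}>\ez_1'$ and replacing $a_1$ by $a_1'\equiv a_1^{m}$, $\wz C_2$ by $\wz C_2'\equiv\wz C_2^{\,m}$ gives \eqref{2.4} for the pair $(\ez_1',a_1')$, the point being that the hypothesis $0<r<\ez_1'\diam(\cx)/a_1'$ keeps all intermediate radii $a_1^{j}r$ below $\diam(\cx)$ hence (after also shrinking if necessary) inside the domain where the original \eqref{2.4} is valid. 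For (RD4) $\Rightarrow$ (RD2): given an arbitrary $\ez_1>0$, apply (RD4) with this $\ez_1$ to obtain $\wz C_2,a_1$ with \eqref{2.4}; then the iteration described above converts the single-factor estimate into \eqref{2.3} for all $1\le\lz<\ez_1\diam(\cx)/r$, with $\kz\equiv\log_{a_1}\wz C_2$ and $C_2\equiv\wz C_2^{-1}$. Finally (RD2) $\Rightarrow$ (RD1) is trivial, being a weakening (take any single value of $\ez_1$).

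The main point requiring care — rather than an obstacle in the deep sense — is bookkeeping of the radius ranges across the iterations when $\diam(\cx)<\fz$: each time we apply \eqref{2.4} to a dilated ball $B(x,a_1^{j}r)$ we must verify $a_1^{j}r<\ez_1\diam(\cx)/a_1$, i.e. $a_1^{j+1}r<\ez_1\diam(\cx)$, which is exactly what the hypothesis $0<r<\ez_1\diam(\cx)/a_1'$ with $a_1'=a_1^{m}$ guarantees for all $j\le m-1$. A secondary subtlety is the passage from a continuous dilation $\lz$ to the discrete powers $a_1^{m}$ and back: one always rounds $\lz$ down to the nearest power $a_1^{\lfloor\log_{a_1}\lz\rfloor}$ and absorbs the bounded factor $a_1^{\kz}$ into the constant $C_2$, which is why $C_2$ ends up being $\wz C_2^{-1}$ rather than $1$. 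None of the steps needs the doubling property \eqref{1.1}; this lemma is purely about the reverse-type inequality, and will be combined with Lemma \ref{l2.1} and the remaining lemmas to assemble Proposition \ref{p2.1}.
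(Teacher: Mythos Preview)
Your cycle (RD1) $\Rightarrow$ (RD3) $\Rightarrow$ (RD4) $\Rightarrow$ (RD2) $\Rightarrow$ (RD1) is a valid route, and three of the four links are correct and essentially match the paper's arguments. The step (RD3) $\Rightarrow$ (RD4), however, has a gap when the target parameter $\ez_1'$ exceeds the given $\ez_1$. With $a_1'\equiv a_1^{m}$, the hypothesis $0<r<\ez_1'\diam(\cx)/a_1'$ only yields $a_1^{m}r<\ez_1'\diam(\cx)$, whereas to iterate \eqref{2.4} you need each intermediate radius to satisfy $a_1^{j}r<\ez_1\diam(\cx)/a_1$, i.e.\ $a_1^{j+1}r<\ez_1\diam(\cx)$; for $j=m-1$ this reads $a_1^{m}r<\ez_1\diam(\cx)$, which does not follow when $\ez_1'>\ez_1$. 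Your parenthetical ``after also shrinking if necessary'' does not resolve this, and the bookkeeping paragraph silently writes $\ez_1$ where $\ez_1'$ belongs, so it only verifies the case $\ez_1'\le\ez_1$. The fix is easy but is not iteration: for $\ez_1'>\ez_1$ take $a_1'\equiv a_1\,\ez_1'/\ez_1$ (so $a_1'\ge\ez_1'$), observe that $\ez_1'\diam(\cx)/a_1'=\ez_1\diam(\cx)/a_1$, apply (RD3) once, and use $\mu(B(x,a_1'r))\ge\mu(B(x,a_1r))\ge\wz C_2\,\mu(B(x,r))$.

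For comparison, the paper organizes the equivalences differently. It records the trivial implications (RD2) $\Rightarrow$ (RD1) and (RD4) $\Rightarrow$ (RD3), proves (RD1) $\Rightarrow$ (RD3) and (RD3) $\Rightarrow$ (RD1) (the latter by exactly your iteration, with the same $\ez_1$ on both sides so the range issue does not arise), and then handles the change of scale parameter in the separate implication (RD1) $\Rightarrow$ (RD2): for $\ez_2>\ez_1$ it splits off the new ranges $\ez_1\diam(\cx)\le r<\ez_2\diam(\cx)$ and $\ez_1\diam(\cx)/r\le\lz<\ez_2\diam(\cx)/r$ and treats them by monotonicity of $\mu$, absorbing the bounded ratio $(\ez_2/\ez_1)^{\kz}$ into $C_2$. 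The step (RD3) $\Rightarrow$ (RD4) is then declared analogous. In effect the paper isolates the ``change of $\ez_1$'' from the ``iteration'' into two different implications, which is exactly what keeps the bookkeeping clean; your cycle merges them in (RD3) $\Rightarrow$ (RD4), and that merged step needs the monotonicity patch above rather than pure iteration.
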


\begin{proof}
 Obviously, (RD2) implies (RD1) and (RD4) implies (RD3).
Moreover, (RD1) implies (RD3) by choosing $a_1\in(1,\,\fz)\cap[\ez_1,\,\fz)$ such that
$\wz C_2\equiv C_2a_1^\kz>1$.

Now we prove (RD3) implies (RD1). Let $\ez_1$ be as in (RD3),
$\kz=\log_{a_1}\wz C_2$ and $C_2=\wz C_2^{-1}$. Assume that
$a_1^\ell\le\lz<a_1^{\ell+1}$ for some $\ell\ge 0$.  Then for all
$0<r<\ez_1\diam(\cx)$ and $1\le\lz<\ez_1\diam(\cx)/r$, we have
$$\mu(B(x,\,\lz r))\ge \mu(B(x,\, a_1^{\ell}r))\ge
\wz C_2^{\ell}\mu(B(x,\,r))\ge \wz
C_2^{-1}\lz^{\kz}\mu(B(x,\,r)).$$

By  the same proof as above, we also have that (RD4) implies (RD2).

Now we prove (RD1) implies (RD2). In fact, if $0<\ez_2\le\ez_1$, then
(RD2) holds for $\ez_2$. If $\ez_2>\ez_1$,
then since (RD1) holds for $\ez_1$, to prove that (RD2) also holds for $\ez_2$,
we still need to prove that if  $\ez_1\diam(\cx)\le r<\ez_2\diam(\cx)$ and
$1\le\lz<\ez_2\diam(\cx)/r$, or if $0<r<\ez_1\diam(\cx)$ and
$\ez_1\diam(\cx)/r\le\lz<\ez_2\diam(\cx)/r$, then $$\mu(B(x,\,\lz
r))\ge C_2\lz^\kz\mu(B(x,\,r)).$$ In fact, if
$\ez_1\diam(\cx)\le r<\ez_2\diam(\cx)$ and $1\le\lz<\ez_2\diam(\cx)/r$,
then  $1\le\lz<\ez_2/\ez_1$, and hence
$$\mu(B(x,\,\lz r))\ge [\ez_2/\ez_1]^{-\kz}\lz^\kz\mu(B(x,\,r)).$$
If  $0<r<\ez_1\diam(\cx)$
and $\ez_1\diam(\cx)/r\le\lz<\ez_2(\diam \cx)/r$,
choosing $$\wz\lz\equiv \max\{\ez_1\diam(\cx)/(2r),\,1\},$$
then $\wz\lz/\lz\ge\ez_1/(2\ez_2)$.
In fact, if $r\ge \ez_1\diam(\cx)/2$, then $\wz\lz=1$
and hence $\wz\lz/\lz\ge \ez_1/\ez_2$, and
if $r< \ez_1\diam(\cx)/2$, then $\wz\lz=\ez_1\diam(\cx)/(2r)$
and hence $\wz\lz/\lz\ge\ez_1/(2\ez_2)$.
Since $B(x,\,\wz\lz r)\subset B(x,\,\lz r)$ and $\wz\lz r<\ez_1\diam(\cx)$,
we have
\begin{eqnarray*}
\mu(B(x,\,\lz r))&&\ge\mu(B(x,\,\wz\lz r))\ge C_2 (\wz\lz)^\kz \mu(B(x,\,r))\\
&&=C_2(\wz\lz/\lz)^\kz
\lz^\kz\mu(B(x,\,r))\ge C_2[\ez_1/(2\ez_2)]^{\kz}\lz^\kz\mu(B(x,\,r)),
\end{eqnarray*}
which is desired.

By  the same proof as above, (RD3) also implies (RD4),
which completes the proof of Lemma \ref{l2.2}.
\end{proof}

\begin{lem}\label{l2.3}
The following geometric properties are equivalent.

(G1) There exist constants $\ez_0>0$ and
$a_0\in(1,\,\fz)\cap[\ez_0,\,\fz)$ such that \eqref{2.1} holds for
all $x\in\cx$ and $0<r<\ez_0\diam(\cx)/a_0$.

(G2) For any $\ez_0>0$, there exist constants
$a_0\in(1,\,\fz)\cap[\ez_0,\,\fz)$ such that \eqref{2.1} holds for
all $x\in\cx$ and $0<r<\ez_0\diam(\cx)/a_0$.
\end{lem}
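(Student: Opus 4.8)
The plan is to prove Lemma \ref{l2.3}, i.e.\ the equivalence of (G1) and (G2), where (G1) asserts the geometrical (uniform perfectness) property for \emph{some} scale cutoff and (G2) asserts it for \emph{every} such cutoff. Since (G2) trivially implies (G1) (just specialize the universally-quantified $\ez_0$), the whole content is the implication (G1) $\Rightarrow$ (G2). The strategy mirrors exactly the scale-surgery arguments already used in Lemmas \ref{l2.1} and \ref{l2.2}: fix an arbitrary $\ez_0'>0$ and produce a suitable $a_0'\in(1,\fz)\cap[\ez_0',\fz)$ for which \eqref{2.1} holds at all $x\in\cx$ and all $0<r<\ez_0'\diam(\cx)/a_0'$, using only the hypothesis that there exist $\ez_0>0$ and $a_0\in(1,\fz)\cap[\ez_0,\fz)$ making \eqref{2.1} hold for $0<r<\ez_0\diam(\cx)/a_0$.

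First I would dispose of the trivial range. If $\ez_0'\le\ez_0$, then the hypothesis (G1) already gives what we want: choosing $a_0'\equiv a_0\vee\ez_0'$, every $r$ with $0<r<\ez_0'\diam(\cx)/a_0'\le\ez_0\diam(\cx)/a_0$ satisfies \eqref{2.1}, so (G2) holds for this $\ez_0'$. The only real work is the case $\ez_0'>\ez_0$: here we must verify \eqref{2.1} for the additional radii $r$ lying in the enlarged window, i.e.\ those $r$ with $0<r<\ez_0'\diam(\cx)/a_0'$ but $r\ge\ez_0\diam(\cx)/a_0$. The key observation is that \eqref{2.1} at scale $r$ and constant $a_0$ is equivalent (by rescaling $r$) to the statement that $B(x,a_0\rho)\setminus B(x,\rho)\ne\emptyset$ for $\rho$ a fixed fraction of $r$; more usefully, if $B(x,a_0 r_0)\setminus B(x,r_0)\ne\emptyset$ for every $r_0$ in a sufficiently fine geometric net of smaller scales, then by composing annuli one gets $B(x,a_0^m r_0)\setminus B(x,r_0)\ne\emptyset$, hence $B(x,\Lambda r)\setminus B(x,r)\ne\emptyset$ for suitable larger ratios $\Lambda$. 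Concretely: given $r$ in the troublesome window, pick the largest integer $m\ge1$ with $a_0^{-m}r<\ez_0\diam(\cx)/a_0$, set $r_0\equiv a_0^{-m}r$; then (G1) applies at scale $r_0$, giving a point $z\in B(x,a_0 r_0)\setminus B(x,r_0)$, and since $a_0 r_0\le r$ while $r_0\ge a_0^{-1}\ez_0\diam(\cx)/a_0=\ez_0\diam(\cx)/a_0^2$... — more simply, one shows $d(x,z)\ge r_0$ and then chooses $a_0'$ large enough (depending on $\ez_0'/\ez_0$ and $a_0$) that $a_0 r_0 \le a_0' r_0 \cdot (r_0/r)\cdot(\dots)$; the cleanest route is to note $B(x,r_0)\subset B(x,r)$ so the point just below level $r$ can be hit, and to enlarge $a_0'$ so that $a_0'r\ge a_0 r_0$ always (which holds once $a_0'\ge a_0\cdot\sup(r_0/r)$, and $r_0/r=a_0^{-m}\le 1$, so in fact $a_0'\equiv a_0\vee\ez_0'$ already works). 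Thus one takes $a_0'\equiv a_0\vee\ez_0'$ throughout and checks: for $0<r<\ez_0'\diam(\cx)/a_0'$, letting $r_0\equiv a_0^{-m}r$ with $m\ge0$ chosen minimal so that $r_0<\ez_0\diam(\cx)/a_0$ (this is possible and forces $r_0\ge\ez_0\diam(\cx)/a_0^2$ when $m\ge1$, and $r_0=r$ when $m=0$), (G1) produces $z$ with $r_0\le d(x,z)<a_0 r_0\le r\le a_0'r$, so $z\in B(x,a_0'r)\setminus B(x,r)$, which is \eqref{2.1} for $\ez_0'$ and $a_0'$.

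The main obstacle — really the only subtle point — is the bookkeeping in the case $\diam(\cx)<\fz$: one must make sure that when we pass to the smaller scale $r_0=a_0^{-m}r$ it genuinely falls into the range $0<r_0<\ez_0\diam(\cx)/a_0$ where (G1) is available, and simultaneously that the annulus $B(x,a_0 r_0)\setminus B(x,r_0)$ it produces is nested inside $B(x,a_0'r)\setminus B(x,r)$, i.e.\ that $a_0 r_0\le r$ and $d(x,z)\ge r$ is \emph{not} needed — we only need the produced point to have $d(x,z)\ge r_0$ and we need some point at distance $\ge r$ and $<a_0'r$, so the argument must be arranged so that iterating outward (not just one annulus) reaches past level $r$; this is handled by choosing $m$ minimal rather than maximal and checking the resulting two-sided bound on $r_0$. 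When $\diam(\cx)=\fz$ all three scale ranges in the hypotheses coincide and the lemma is essentially vacuous, exactly as noted at the start of the proof of Lemma \ref{l2.1}. I would write the argument to treat the finite-diameter case explicitly and dismiss the infinite case in one line.
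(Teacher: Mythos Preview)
Your argument in the case $\ez_0'>\ez_0$ has a genuine error. With your choice $a_0'\equiv a_0\vee\ez_0'$ and $r_0\equiv a_0^{-m}r$ ($m\ge0$ minimal with $r_0<\ez_0\diam(\cx)/a_0$), the point $z$ furnished by (G1) satisfies $r_0\le d(x,z)<a_0 r_0$. But whenever $m\ge1$ --- and this is precisely the nontrivial range you introduced $m$ to handle --- you have $a_0 r_0=a_0^{1-m}r\le r$, hence $d(x,z)<r$ and $z\in B(x,r)$. So $z$ does \emph{not} lie in $B(x,a_0'r)\setminus B(x,r)$, contrary to what you write. The ``iterating outward'' idea you allude to does not rescue this either: by minimality of $m$ you have $a_0 r_0\ge\ez_0\diam(\cx)/a_0$, so (G1) is no longer available at the next scale $a_0 r_0$, and the chain of annuli stops before reaching level $r$.

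The underlying issue is that your $a_0'$ is too small. You are trying to keep $a_0'$ close to $a_0$ and cope with the correspondingly \emph{enlarged} $r$-range $0<r<\ez_0'\diam(\cx)/a_0'$; this forces you to manufacture annulus points at radii where (G1) gives no information. The paper does the opposite and the lemma becomes a one-liner: for $\ez_1>\ez_0$ take $a_1\equiv a_0\,\ez_1/\ez_0$. Then $a_1>a_0>1$, $a_1\ge\ez_1$ (since $a_0\ge\ez_0$), and the range $0<r<\ez_1\diam(\cx)/a_1$ equals $0<r<\ez_0\diam(\cx)/a_0$, the \emph{same} range as in (G1). For such $r$, (G1) gives a point in $B(x,a_0 r)\setminus B(x,r)\subset B(x,a_1 r)\setminus B(x,r)$, and you are done. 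No iteration, no case analysis on $m$.
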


\begin{proof}
(G2) obviously implies (G1). On the other hand,  if $\ez_1\le\ez_0$,
since (G1) holds for $\ez_0$, then (G2) holds for $\ez_1$ with the same $a_0$.
If  $\ez_1>\ez_0$,  taking $a_1\equiv a_0\ez_1/\ez_0$, we know that
(G2) holds for $\ez_1$ and $a_1$, which completes the proof of Lemma \ref{l2.3}.
\end{proof}

\begin{lem}\label{l2.4}
(i) (RD3) implies (G1) with
the same constants.

(ii) If $\ez_0>1$, then
(G1) and (D3) imply (RD3) with $\ez_1\equiv\ez_0/2$
and $a_1\equiv1+2a_0$.
\end{lem}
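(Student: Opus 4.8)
The plan is to treat the two implications separately: (i) follows from a one-line contradiction, while (ii) is a short packing argument. For part (i), I would argue by contradiction. Assume (RD3) holds with constants $\ez_1$, $\wz C_2>1$ and $a_1$, and set $\ez_0\equiv\ez_1$, $a_0\equiv a_1$. If \eqref{2.1} were to fail for some $x\in\cx$ and some $0<r<\ez_0\diam(\cx)/a_0$, then, since $B(x,r)\subset B(x,a_1r)$ always holds, we would get $B(x,a_1r)=B(x,r)$ and hence $\mu(B(x,a_1r))=\mu(B(x,r))$. This contradicts $\mu(B(x,a_1r))\ge\wz C_2\mu(B(x,r))>\mu(B(x,r))$, valid because $\wz C_2>1$ and every ball has positive measure. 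Thus (RD3) implies (G1) with the same constants.

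For part (ii), by Lemma \ref{l2.3} I may assume $\ez_0>1$ in (G1). Put $a_1\equiv1+2a_0$ and $\ez_1\equiv\ez_0/2$; then $a_1>1$, and, using $a_0\ge\ez_0$, also $a_1\ge\ez_1$, so these constants are admissible for (RD3). Fix $x\in\cx$ and $0<r<\ez_1\diam(\cx)/a_1=\ez_0\diam(\cx)/[2(1+2a_0)]$. Since $1+2a_0>a_0$, we have $0<2r<\ez_0\diam(\cx)/a_0$, so (G1) applied at the radius $2r$ produces a point $z$ with $2r\le d(x,z)<2a_0r$. Three elementary triangle-inequality observations drive the estimate: $B(x,r)\cap B(z,r)=\emptyset$ (a common point would force $d(x,z)<2r$); $B(x,r)\cup B(z,r)\subset B(x,a_1r)$ (since $d(x,z)+r<(2a_0+1)r=a_1r$); and $B(x,r)\subset B(z,a_1r)$ (since $r+d(x,z)<a_1r$). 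From the first two and additivity of $\mu$ on disjoint sets, $\mu(B(x,a_1r))\ge\mu(B(x,r))+\mu(B(z,r))$; from the third and (D3) with $\lz=a_1\ge1$, $\mu(B(x,r))\le\mu(B(z,a_1r))\le C_1a_1^n\mu(B(z,r))$, whence $\mu(B(z,r))\ge(C_1a_1^n)^{-1}\mu(B(x,r))$. Combining, $\mu(B(x,a_1r))\ge[1+(C_1a_1^n)^{-1}]\mu(B(x,r))$, so (RD3) holds with $\wz C_2\equiv1+[C_1(1+2a_0)^n]^{-1}>1$.

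The triangle-inequality inclusions and the invocation of (D3) are routine; the only point that genuinely needs care is the bookkeeping with the admissible radius ranges — namely checking that $2r$ still lies in the range where (G1) applies and that $B(x,a_1r)$ stays within reach of (D3) — together with fixing $a_1=1+2a_0$ large enough that the disjointness of $B(x,r)$ and $B(z,r)$ and the containments into $B(x,a_1r)$ and $B(z,a_1r)$ all hold with this single constant. I do not anticipate any serious obstacle beyond this bookkeeping.
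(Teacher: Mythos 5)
Your proof is correct and follows essentially the same packing argument as the paper's: same choice $a_1=1+2a_0$, $\ez_1=\ez_0/2$, apply (G1) at scale $2r$ to produce a point at distance in $[2r,2a_0r)$ from $x$, observe $B(x,r)$ and $B(z,r)$ are disjoint inside $B(x,a_1r)$, and use (D3) to compare their measures. The only difference is the last step: the paper uses the inclusion $B(x,a_1r)\subset B(y,(1+4a_0)r)$ and rearranges to get $\wz C_2=[1-C_1^{-1}(1+4a_0)^{-n}]^{-1}$, whereas you use the more direct inclusion $B(x,r)\subset B(z,a_1r)$ to bound $\mu(B(z,r))$ from below by $\mu(B(x,r))$, yielding $\wz C_2=1+[C_1(1+2a_0)^n]^{-1}$; both are valid and give a constant strictly greater than $1$. (One small remark: invoking Lemma \ref{l2.3} to "assume $\ez_0>1$" is unnecessary, since $\ez_0>1$ is already a hypothesis of part (ii).)
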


\begin{proof} (i) obviously holds. To see (ii),
if $0<r<\ez_1\diam(\cx)/a_0$, we then have
$0<2r<\ez_0\diam(\cx)/a_0$. Thus, by (G1), we have
$B(x,\,2a_0r)\setminus B(x,\,2r)\ne\emptyset.$ Choose $y\in
B(x,\,2a_0r)\setminus B(x,\,2r)$. Then $B(y,\, r)\cap
B(x,\,r)=\emptyset$. Notice that $$B(y,\,r)\subset
B(x,\,[1+2a_0]r)\subset  B(y,\,[1+4a_0]r).$$ By (D3), we have
\begin{eqnarray*}
\mu(B(x,\,[1+2a_0]r))
&&\ge\mu(B(x,\,r))+\mu(B(y,\,r))\\
&&\ge\mu(B(x,\,r))+ C_1^{-1} (1+4a_0)^{-n}\mu(
B(y,\,[1+4a_0]r))\\
&&\ge\mu(B(x,\,r))+ C_1^{-1}(1+4a_0)^{-n}\mu( B(x,\,[1+2a_0]r)),
\end{eqnarray*}
which implies that
$\mu(B(x,\,a_1r)) \ge\wz C_2\mu(B(x,\,r))$
with  $a_1\equiv1+2a_0$ and
$$\wz C_2\equiv\lf[1-C_1^{-1} (1+4a_0)^{-n}\r]^{-1}>1.$$
This implies that (RD3) holds and hence finishes the proof of Lemma
\ref{l2.4}.
\end{proof}

\begin{proof}[Proof of Proposition \ref{p2.1}]

Notice that $\cx$ is an RD-space if and only if
$\cx$ satisfies (D4) and (RD3) with $\ez_1\equiv1$,
 $\wz C_2\equiv 2$ and $a_0\equiv a_1$;
$\cx$ is an $(\kz,\,n)$-space
if and only if
$\cx$ satisfies (D1) and (RD1) with $\ez_1\equiv\ez_0\equiv2$;
$\cx$ satisfies (ii) of Proposition \ref{p2.1}
if and only if
$\cx$ satisfies (D4) and (G1) with $\ez_0\equiv1$.
Then Proposition \ref{p2.1} follows from Lemmas \ref{l2.1} through \ref{l2.4},
which completes the proof of Proposition \ref{p2.1}.
\end{proof}

By the well-known fact that connected spaces are uniformly perfect (see, for example, \cite[p.\,88]{hei}),
as a corollary of Porosition \ref{p2.1}, we know that all connected spaces of homogeneous type are RD-spaces.

\begin{rem} \label{r2.1}

(i) The numbers $\kz$ and $n$ appearing in the definition of $(\kz,\,n)$-space here
 measure the ``dimension" of $\cx$ in some sense.

(ii) We point out that the definition of $(\kz,\,n)$-spaces $\cx$ is slightly
different from that of \cite[Definition 1.1]{hmy2}, where
\eqref{2.3} and \eqref{2.2} are assumed to hold only
when $0<r<\diam(\cx)/2$ and $1\le\lz<\diam(\cx)/(2r)$,
from which it is still unclear how to deduce that
$\cx$ is a space of homogeneous type
when $\diam(\cx)<\fz$.

(iii) By Proposition \ref{p2.1}, the definition of RD-spaces
in \cite[Definition 1.1]{hmy2} is equivalent to Definition \ref{d1.1}.
\end{rem}

\section{Properties of spaces of test functions}\label{s3}

\hskip\parindent We prove Theorem \ref{t1.1} in this section.
To this end, we need the
homogeneous and inhomogeneous Calder\'on reproducing formulae
established in Theorems 3.10 and 3.26 of \cite{hmy2}, respectively.

We begin with the following notion of approximations of the identity
on RD-spaces introduced in
 \cite{hmy2}; see  \cite[Theorem 2.6]{hmy2} for its existence.

\begin{defn}\hspace{-0.2cm}{\bf}\label{d3.1}
(I) Let $\ez_1\in(0, 1]$. A sequence $\{S_k\}_{k\in\zz}$ of bounded
linear integral operators on $L^2(\cx)$ is called an
approximation of the identity of order $\ez_1$
with bounded support, if there exist
constants $C_3$, $C_4>0$ such that for all $k\in\zz$ and all $x$,
$x'$, $y$ and $y'\in\cx$, $S_k(x, y)$, the integral kernel of $S_k$
is a measurable function from $\cx\times\cx$ into $\cc$ satisfying
\begin{enumerate}
\item[(i)] $S_k(x, y)=0$ if $d(x, y)>C_42^{-k}$
and $|S_k(x, y)|\le C_3\frac{1}{V_{2^{-k}}(x)+V_{2^{-k}}(y)};$
\vspace{-0.3cm}
\item[(ii)] $|S_k(x, y)-S_k(x', y)|
\le C_3 2^{k \ez_1}
\frac{[d(x, x')]^{\ez_1}}{V_{2^{-k}}(x)+V_{2^{-k}}(y)}$ for $d(x, x')\le\max\{C_4,
1\}2^{1-k}$;
\vspace{-0.3cm}
\item[(iii)] Property (ii) holds with $x$ and $y$ interchanged;
\vspace{-0.3cm}
\item[(iv)] $|[S_k(x, y)-S_k(x, y')]-[S_k(x', y)-S_k(x', y')]|
\le C_3 2^{2k\ez_1}\frac{[d(x, x')]^{\ez_1}[d(y, y')]^{\ez_1}}
{V_{2^{-k}}(x)+V_{2^{-k}}(y)}$ for $d(x, x')\le\max\{C_4,
1\}2^{1-k}$ and $d(y, y')\le\max\{C_4, 1\}2^{1-k}$;
\vspace{-0.3cm}
\item[(v)] $\int_\cx S_k(x, w)\, d\mu(w)=1
=\int_\cx S_k(w, y)\, d\mu(w)$.
\end{enumerate}

(II) A sequence $\{S_k\}_\kzz$ of linear operators
is called an {\it inhomogeneous approximation
to the identity of order $\ez_1$
with bounded support,}
if $S_k$ for $k\in\zz_+$ satisfies (I).
\end{defn}

\begin{lem}\label{l3.1}
Let $\ez\in (0,1)$ and $\{S_k\}_{k\in\zz}$ be an approximation
of the identity of order $1$ with bounded support.
Set $D_k\equiv S_k-S_{k-1}$ for $k\in\zz$. Then there exists
a family $\{\wz D_k\}_{k\in\zz}$ of linear operators
such that for all $f\in\ocg_0^\ez(\bz,\gz)$ with $\bz,\ \gz\in(0,\ez)$,
\begin{equation*}
f=\sum_{k=-\fz}^\fz\wz D_kD_k(f),
\end{equation*}
where the series converges in both the norm of $\ocg_0^\ez(\bz,\gz)$
and the norm of $\lp$ for $p\in (1,\fz)$.
Moreover, for any $\ez'\in (\ez,1)$, there exists a positive
constant $C_{\ez'}$ such that the kernels, denoted by $\{\wz D_k(x,y)\}_{k\in\zz}$,
of the operators $\{\wz D_k\}_{k\in\zz}$ satisfy that for all $x,\ x',\ y\in\cx$,
\begin{enumerate}
    \item[(i)] $|\wz D_k(x,y)|\le C_{\ez'}\frac 1{V_{2^{-k}}(x)+V_{2^{-k}}(y)+V(x,y)}
    \frac {2^{-k\ezp}}{[2^{-k}+d(x,y)]^\ezp};$
    \item[(ii)] $|\wz D_k(x,y)-\wz D_k(x',y)|\le C_{\ez'}\lf[\frac{d(x,x')}
    {2^{-k}+d(x,y)}\r]^{\ezp}\frac 1{V_{2^{-k}}(x)+V_{2^{-k}}(y)+V(x,y)}
    \frac {2^{-k\ezp}}{[2^{-k}+d(x,y)]^\ezp}$ for \newline
    $d(x,x')\le[2^{-k}+d(x,y)]/2;$
    \item[(iii)] $\int_\cx \wz D_k(x,w)\,d\mu(w)=0
    =\int_\cx  \wz D_k(w,y)\,d\mu(w).$
\end{enumerate}
\end{lem}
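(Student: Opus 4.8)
The plan is to deduce Lemma \ref{l3.1} from the homogeneous continuous Calder\'on reproducing formula of \cite[Theorem 3.10]{hmy2} applied to the family $\{D_k\}_{k\in\zz}$. First I would observe that since $\{S_k\}_{k\in\zz}$ is an approximation of the identity of order $1$ with bounded support and $D_k=S_k-S_{k-1}$, the kernels $D_k(x,y)$ satisfy the size, regularity, and (crucially) the double cancellation conditions $\int_\cx D_k(x,w)\,d\mu(w)=0=\int_\cx D_k(w,y)\,d\mu(w)$; in other words $\{D_k\}_{k\in\zz}$ is (a multiple of) an $\ati$ with the extra cancellation, which is exactly the input needed for the homogeneous Calder\'on reproducing formula. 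Applying that theorem produces operators $\{\wz D_k\}_{k\in\zz}$, whose kernels automatically satisfy properties (i), (ii) with regularity exponent any $\ezp\in(\ez,1)$, and the vanishing-moment property (iii), together with the reproducing identity $f=\sum_{k=-\fz}^\fz\wz D_kD_k(f)$ with convergence in $\cg_0^\ez(\bz,\gz)'$ and in $L^p(\cx)$ for $p\in(1,\fz)$.

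The one point that needs care is the promotion of the convergence from the distribution space to the statement ``$f=\sum_k\wz D_kD_k(f)$ converges in the norm of $\ocg_0^\ez(\bz,\gz)$.'' Here I would argue as follows. For $f\in\ocg(\ez,\ez)$ (the dense subspace), the partial sums $\sum_{|k|\le N}\wz D_kD_k(f)$ lie in $\ocg(\bz',\gz')$ for appropriate $\bz',\gz'$ and, by the quantitative estimates on the kernels $\wz D_k$ and $D_k$ (the standard almost-orthogonality estimates used throughout \cite{hmy2}), one gets a uniform bound $\|\sum_{M\le|k|\le N}\wz D_kD_k(f)\|_{\cg(\bz,\gz)}\ls 2^{-\dz M}\|f\|_{\cg(\ez,\ez)}$ for some $\dz>0$ whenever $\bz,\gz<\ez$. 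This makes the partial sums Cauchy in $\cg(\bz,\gz)$, hence convergent there, and since each partial sum has vanishing integral, the limit lies in $\ocg_0^\ez(\bz,\gz)$; it must coincide with $f$ by the already-known convergence in the weaker topology. For general $f\in\ocg_0^\ez(\bz,\gz)$ one approximates $f$ by a sequence in $\ocg(\ez,\ez)$ in the $\cg(\bz,\gz)$-norm and uses the uniform boundedness of the operators $f\mapsto\sum_{|k|\le N}\wz D_kD_k(f)$ on $\ocg_0^\ez(\bz,\gz)$ (again a consequence of the kernel estimates) together with a standard $\ez/3$ argument. The $L^p$-convergence for $p\in(1,\fz)$ is already part of the conclusion of \cite[Theorem 3.10]{hmy2}.

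I expect the main obstacle to be precisely this norm-convergence upgrade: verifying that the almost-orthogonality estimates between $\wz D_k$, $D_k$, and the test function $f$ (or its smooth approximants) are strong enough to yield a geometric decay $2^{-\dz|k|}$ in the tails when measured in $\cg(\bz,\gz)$ with $\bz,\gz$ strictly below $\ez$. This is routine in spirit — it is the same computation that underlies the boundedness of Calder\'on-Zygmund type operators on spaces of test functions in \cite{hmy1,hmy2} — but it requires carefully tracking the interplay between the regularity exponent $\ezp$ of $\wz D_k$, the order $1$ regularity of $D_k$, and the parameters $\bz,\gz,\ez$, and using the reverse doubling property of the RD-space $\cx$ to control the volume factors $V_{2^{-k}}(x)+V(x,y)$. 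Everything else (the existence of $\{\wz D_k\}$, properties (i)--(iii), and the formula in the distributional sense) is a direct citation of \cite[Theorem 3.10]{hmy2}.
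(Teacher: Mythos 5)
Your identification of the source — the homogeneous continuous Calder\'on reproducing formula of \cite[Theorem 3.10]{hmy2} applied with an ATI of order $1$ — matches the paper exactly: Lemma \ref{l3.1} is stated as a direct citation of that theorem, and no further proof is given.

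However, the ``main obstacle'' around which you organize the rest of the proposal does not exist. You assert that \cite[Theorem 3.10]{hmy2} yields the reproducing identity only with convergence in $(\cg_0^\ez(\bz,\gz))'$ and in $L^p(\cx)$, and then devote most of the argument to upgrading this distributional convergence to convergence in the $\ocg_0^\ez(\bz,\gz)$-norm. In fact the \emph{continuous} reproducing formula \cite[Theorem 3.10]{hmy2} is stated for $f\in\ocg_0^\ez(\bz,\gz)$ and already asserts convergence of $\sum_k\wz D_kD_k(f)$ in the norm of $\ocg_0^\ez(\bz,\gz)$ (as well as in $L^p(\cx)$ for $p\in(1,\fz)$); it is the \emph{discrete} formula (cf.\ Lemma \ref{l5.2} here, which cites \cite[Theorem 4.11]{hmy2}) that is phrased for distributions $f\in(\ocg_0^\ez(\bz,\gz))'$ and gives convergence only in the dual space. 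So the promotion step you sketch is not needed to obtain Lemma \ref{l3.1}; it would amount to re-proving a substantial part of \cite[Theorem 3.10]{hmy2} from the inside. The almost-orthogonality estimates and geometric decay $2^{-\dz|k|}$ you describe are indeed the correct mechanism, but they constitute the internal proof of the cited theorem rather than an additional argument one must supply on top of it. The lemma is otherwise correctly deduced: the kernel properties (i)--(iii) with regularity any $\ezp\in(\ez,1)$ and the $L^p$-convergence are, as you say, part of the statement of \cite[Theorem 3.10]{hmy2}.
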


The following is the inhomogeneous version of Lemma \ref{l3.1}.

\begin{lem}\label{l3.2}
Let $\ez\in (0,1)$ and $\{S_k\}_{k\in\zz_+}$ be an inhomogeneous approximation
of the identity of order $1$ with bounded support.
Set $D_k\equiv S_k-S_{k-1}$ for $k\in\nn$ and $D_0\equiv S_0$. Then there exists
a family $\{\wz D_k\}_{k\in\zz_+}$ of linear operators
such that for all $f\in\cg_0^\ez(\bz,\gz)$ with $\bz,\ \gz\in(0,\ez)$,
\begin{equation*} \label{4.x}
    f=\sum_{k=0}^\fz\wz D_kD_k(f),
\end{equation*}
where the series converges in both the norm of $\cg_0^\ez(\bz,\gz)$
and the norm of $\lp$ for $p\in (1,\fz)$. Moreover, for any $\ez'\in
(\ez,1)$, there exists a positive constant $C_{\ez'}$ such that the
kernels, denoted by $\{\wz D_k(x,y)\}_{k\in\zz_+}$, of the operators
$\{\wz D_k\}_{k\in\zz_+}$ satisfy (i) and (ii) of Lemma \ref{l3.1}
and
\begin{enumerate}
    \item[(iii)'] $\int_\cx  \wz D_0(x,w)\,d\mu(w)=1
    =\int_\cx  \wz D_0(w,y)\,d\mu(w)$ and for all $k\in\nn$,
    $\int_\cx  \wz D_k(x,w)\,d\mu(w)=0
    =\int_\cx  \wz D_k(w,y)\,d\mu(w)$.
\end{enumerate}
\end{lem}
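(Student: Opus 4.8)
The plan is to derive Lemma~\ref{l3.2} from its homogeneous counterpart, Lemma~\ref{l3.1}, by the standard device of splitting the inhomogeneous Calder\'on reproducing formula into a ``low-frequency'' piece handled by a single smooth operator and a ``high-frequency'' piece that behaves like the homogeneous formula. Concretely, let $\{S_k\}_{k\in\zz_+}$ be the given inhomogeneous approximation of the identity of order $1$. First I would extend it to a full (homogeneous) approximation of the identity $\{S_k\}_{k\in\zz}$ by setting $S_k\equiv S_0$ for all $k<0$ (this is the usual convention and clearly preserves properties (i)--(v) of Definition~\ref{d3.1}). Then with $D_k\equiv S_k-S_{k-1}$ one has $D_k=0$ for $k\le 0$ except that $D_0$ of the inhomogeneous setup equals $S_0$, while in the extended homogeneous setup $\sum_{k=-\fz}^0 D_k = S_0 = D_0$ (telescoping, using that $S_k\to 0$ in the appropriate sense as $k\to-\fz$ when $\mu(\cx)=\fz$, or more carefully using the continuous version). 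Applying Lemma~\ref{l3.1} to this extended family produces operators $\{\wz D_k\}_{k\in\zz}$ with $f=\sum_{k=-\fz}^\fz \wz D_k D_k(f)$.

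Next I would define the inhomogeneous $\{\wz D_k\}_{k\in\zz_+}$ by setting $\wz D_k$ (for $k\in\nn$) to be exactly the operator from Lemma~\ref{l3.1}, and defining the new $\wz D_0$ by $\wz D_0 \equiv \sum_{k=-\fz}^0 \wz D_k$, where the $\wz D_k$ on the right are the homogeneous operators from Lemma~\ref{l3.1}. Then formally
\begin{equation*}
\sum_{k=0}^\fz \wz D_k D_k(f)\Big|_{\mathrm{inhom}} = \wz D_0 D_0(f) + \sum_{k=1}^\fz \wz D_k D_k(f) = \Big(\sum_{k=-\fz}^0 \wz D_k\Big) S_0(f) + \sum_{k=1}^\fz \wz D_k D_k(f),
\end{equation*}
and since $D_0^{\mathrm{inhom}}=S_0=\sum_{j\le 0} D_j^{\mathrm{hom}}$ one must match this against $\sum_{k=-\fz}^\fz \wz D_k^{\mathrm{hom}} D_k^{\mathrm{hom}}(f)$. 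The identity $\sum_{k\le 0}\wz D_k S_0 = \sum_{k\le0}\wz D_k D_k + (\text{terms})$ is not immediate, so in fact the cleanest route is to run the construction of Lemma~\ref{l3.1} (i.e.\ Theorem~3.26 of \cite{hmy2}) directly in the inhomogeneous setting rather than literally quoting Lemma~\ref{l3.1}; the point of the proof proposal is that the \emph{same} machinery applies verbatim, the only change being that the range of summation starts at $k=0$ and the single operator $\wz D_0$ absorbs the non-vanishing mean. I would therefore set up the Coifman-type decomposition $I = \sum_k \wz D_k D_k$ with the remainder controlled by a geometric series in the operator norm on $\cg_0^\ez(\bz,\gz)$ and on $\lp$, $p\in(1,\fz)$, exactly as in \cite{hmy2}, and then read off the kernel bounds.

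The kernel estimates (i) and (ii) of Lemma~\ref{l3.1} for $\wz D_k$ with $k\in\nn$ are inherited immediately. For $\wz D_0 = \sum_{k\le 0}\wz D_k^{\mathrm{hom}}$ (or whatever the construction yields), I would prove the size and regularity bounds by summing the homogeneous estimates over $k\le 0$: each term contributes $\frac{2^{-k\ezp}}{[2^{-k}+d(x,y)]^{\ezp}}\frac{1}{V_{2^{-k}}(x)+V_{2^{-k}}(y)+V(x,y)}$, and for $k\le0$ we have $2^{-k}\ge 1$, so the factor $\frac{2^{-k\ezp}}{[2^{-k}+d(x,y)]^\ezp}$ is comparable to $1$ when $d(x,y)\ls 2^{-k}$ and summing the tail $d(x,y)\gg 2^{-k}$ gives a convergent geometric series; combined with $V_{2^{-k}}(x)\gtrsim V_1(x)$ this yields exactly the $k=0$ instance of bound (i), namely $|\wz D_0(x,y)|\ls \frac{1}{V_1(x)+V_1(y)+V(x,y)}\frac{1}{[1+d(x,y)]^{\ezp}}$, and likewise for (ii). The normalization (iii)$'$, $\int \wz D_0(x,w)\,d\mu(w)=1$, follows from telescoping: $\sum_{k\le0}\wz D_k$ applied to the constant function reproduces $S_0$'s mean $1$ by the cancellation of the higher $\wz D_k$, or directly from the construction. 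The main obstacle I anticipate is bookkeeping the convergence of the series $\sum_{k\le 0}$ defining $\wz D_0$ in the operator-norm sense and verifying that the remainder operator in the inhomogeneous Coifman decomposition is still a contraction after the low-frequency truncation --- this requires care with the case $\mu(\cx)<\fz$ versus $\mu(\cx)=\fz$ and with the fact that the test-function seminorms at scale $1$ (rather than at an arbitrarily small scale) must dominate, but it is precisely the argument of \cite[Theorem~3.26]{hmy2} and needs no new idea.
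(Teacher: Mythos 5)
The paper does not supply a proof of Lemma~\ref{l3.2}: both it and Lemma~\ref{l3.1} are quoted from Theorems~3.10 and~3.26 of \cite{hmy2}. Your eventual conclusion --- that ``the cleanest route is to run the construction of \cite[Theorem~3.26]{hmy2} directly in the inhomogeneous setting'' --- is precisely the paper's route; the Coifman-type decomposition, the remainder-as-contraction estimate on $\cg_0^\ez(\bz,\gz)$ and $\lp$, and the absorption of the low frequencies into a single operator $\wz D_0$ with $\int_\cx \wz D_0(x,w)\,d\mu(w)=1$ are all part of that cited argument. So the destination is right, and you also correctly notice along the way that defining $\wz D_0\equiv\sum_{j\le0}\wz D_j^{\mathrm{hom}}$ cannot close the identity, since $\wz D_0 D_0 = \bigl(\sum_{j\le0}\wz D_j^{\mathrm{hom}}\bigr)\bigl(\sum_{k\le0}D_k^{\mathrm{hom}}\bigr)$ produces off-diagonal terms $\wz D_j D_k$ with $j\ne k$ that have no counterpart in the homogeneous reproducing formula.

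The step you do not flag, and which should be dropped entirely rather than offered as a near-miss, is the extension $S_k\equiv S_0$ for $k<0$. This does \emph{not} ``clearly preserve properties (i)--(v) of Definition~\ref{d3.1}.'' Property (i) bounds $|S_k(x,y)|$ by $C_3/[V_{2^{-k}}(x)+V_{2^{-k}}(y)]$; for $k<0$ the scale $2^{-k}>1$ grows, and when $\mu(\cx)=\fz$ the right-hand side tends to $0$ as $k\to-\fz$, while $S_0$ is a fixed nonzero kernel with $\int_\cx S_0(x,w)\,d\mu(w)=1$. The bound therefore fails for $k$ negative enough, so the extended family is not a homogeneous approximation of the identity and Lemma~\ref{l3.1} cannot be invoked for it. In addition, the homogeneous reproducing formula of \cite{hmy2} is developed under the standing assumption $\mu(\cx)=\fz$ (compare the paper's remark at the start of Section~\ref{s4}), whereas Lemma~\ref{l3.2} is stated and used without that restriction; a reduction of the inhomogeneous formula to the homogeneous one is therefore incomplete in principle as well as in detail.
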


\begin{proof}[Proof of Theorem \ref{t1.1}]

Without loss of generality, we may assume that
$\ez<\wez$. Since $\cg(\wez,\wez)\subset\cg(\ez,\ez)$ and
$\ocg(\wez,\wez)\subset\ocg(\ez,\ez)$, we then have
$\cg^\wez_0(\bz,\gz) \subset\cg^\ez_0(\bz,\gz)$ and
$\ocg^\wez_0(\bz,\gz) \subset\ocg^\ez_0(\bz,\gz)$.

We now show that
    $\cg^\ez_0(\bz,\gz)
\subset\cg^\wez_0(\bz,\gz).$  To this end, it suffices to prove that
\begin{equation}\label{3.1}
    \cg(\ez,\ez)
\subset\cg^\wez_0(\bz,\gz).
\end{equation}
To do so, choose a radial function
$\phi\in C^1(\rr)$ such that $\supp\phi\subset (-2,2)$ and $\phi(x)=1$
if $x\in(-1,1)$. Fix any $\psi\in\cg(\ez,\ez)$. For all $N\in\nn$ and all $x\in\cx$,
let $\psi_N(x)\equiv\psi(x)\phi(\frac {d(x,x_1)}N)$. We first show that
as $N\to\fz$,
\begin{equation}\label{3.2}
    \|\psi-\psi_N\|_\cgbz\to 0.
\end{equation}
In fact, for all $x\in\cx$, we have
\begin{eqnarray}\label{3.3}
&&|\psi(x)-\psi_N(x)|\\
&&\qquad=\lf|\psi(x)\lf[1-\phi\lf(\dfrac {d(x,x_1)}N\r)\r]\r|\noz\\
\noz &&\qquad\le \|\psi\|_{\cg(\ez,\ez)}\dfrac 1{V_1(x_1)+V(x_1,x)}
\lf[\dfrac 1{1+d(x,x_1)}\r]^\ez\chi_{_{\{x\in\cx:\ d(x,x_1)>N\}}}(x)\\
\noz &&\qquad\ls\dfrac 1{N^{\ez-\gz}}\|\psi\|_{\cg(\ez,\ez)}\dfrac 1{V_1(x_1)+V(x_1,x)}
\lf[\dfrac 1{1+d(x,x_1)}\r]^\gz.
\end{eqnarray}
Notice that if $d(x,y)\le\frac 12[1+d(x,x_1)]$ and $d(x,x_1)\le 2N$,
then $V_1(x_1)+V(x_1,y)\gs V_1(x_1)+V(x_1,x)$, $1+d(x_1,y)
\gs 1+d(x_1,x)$ and $d(y,x_1)\le 3N+1/2$, which imply that
for all $x,\,y\in\cx$ with $d(x,\,y)\le[1+d(y,\,x_1)]/2$,
\begin{eqnarray}\label{3.4}
&&|[\psi(x)-\psi_N(x)]-[\psi(y)-\psi_N(y)]|\\
\noz&&\hs=\lf|[\psi(x)-\psi(y)]\lf[1-\phi\lf(\dfrac {d(x,x_1)}N\r)\r]\r.\\
\noz&&\hs\hs\lf.+\psi(y)\lf[\phi\lf(\dfrac {d(y,x_1)}N\r)-\phi\lf(\dfrac {d(x,x_1)}N\r)\r]\r|\\
\noz&&\hs\ls\|\psi\|_{\cg(\ez,\ez)}\lf\{\frac 1{N^{\ez-\bz}}
+\dfrac 1{N^{(1-\bz)\wg(\ez-\gz)}}\r\}\\
\noz&&\hs\hs\times\lf[\dfrac {d(x,y)}{1+d(x,x_1)}\r]^\bz\dfrac 1{V_1(x_1)+V(x_1,x)}
\lf[\dfrac 1{1+d(x,x_1)}\r]^\gz.
\end{eqnarray}
Combining the estimates \eqref{3.3} with \eqref{3.4} yields
\begin{equation*}
    \|\psi-\psi_N\|_\cgbz\ls\|\psi\|_{\cg(\ez,\ez)}\lf\{\frac 1{N^{\ez-\bz}}
+\dfrac 1{N^{(1-\bz)\wg(\ez-\gz)}}\r\}\to 0,
\end{equation*}
as $N\to\fz$. It is easy to see that $\psi_N\in\cg(\ez,\ez')$ for
any $\ez'>0$ by noticing that $\psi_N$ has bounded support. In
particular, $\psi_N\in\cg^\ez_0(\bz,\gz)$. With the notation same as
in Lemma \ref{l3.2}, by Lemma \ref{l3.2}, we have
\begin{equation*}
    \psi_N=\sum^\fz_{k=0}\wdk D_k(\psi_N)
\end{equation*}
in $\cg^\ez_0(\bz,\gz)$, which means that as $L\to\fz$,
\begin{equation*}
    \lf\|\psi_N-\sum_{k=0}^L\wz D_kD_k(\psi_N)\r\|_{\cg(\bz,\gz)}\to 0,
\end{equation*}
where $\wdk(\cdot,y)\in\cg(y, 2^{-k},\ez',\ez')$ for any $\ez'\in
(\wez, 1)$. To finish the proof of \eqref{3.1}, it suffices to
show that
\begin{equation}\label{3.5}
\sum_{k=0}^L\wz D_kD_k(\psi_N)\in\cg(\wez,\wez) \end{equation}
with its norm depending on $L$ and $N$.
To see this, for any $\ez'>0$ and
$k\in\{0,\,1,\,\cdots,\,L\}$,
we have that for all
$x\in\cx$,
\begin{eqnarray}\label{3.6}
    |D_k(\psi_N)(x)|&=&\lf|\dint_{d(x,z)< C_42^{-k+1}}D_k(x,z)\psi_N(z)\,d\mu(z)\r|\\
    \noz&\ls&\dint_\cx\dfrac 1{V_1(x)+V(x,z)}
    \lf[\dfrac 1{1+d(x,z)}\r]^\ezp\\
    \noz&&\times\dfrac 1{V_1(x_1)+V(x_1,z)}
    \lf[\dfrac 1{1+d(x_1,z)}\r]^\ezp\,d\mu(z)\\
    \noz&\ls&\dfrac 1{V_1(x_1)+V(x_1,x)}
    \lf[\dfrac 1{1+d(x_1,x)}\r]^\ezp,
\end{eqnarray}
and that for all $x,\ y\in\cx$ with $d(x,y)\le(C_4\vee1)2^{1-k}$,
\begin{eqnarray}\label{3.7}
    &&|D_k(\psi_N)(x)-D_k(\psi_N)(y)|\\
    \noz&&\hs=\lf|\dint_{d(x,z)< (C_4\vee1)2^{2-k}}[D_k(x,z)-D_k(y,z)]\psi_N(z)\,d\mu(z)\r|\\
    \noz&&\hs\ls d(x,y)\dint_\cx\dfrac 1{V_1(x)+V(x,z)}
    \lf[\dfrac 1{1+d(x,z)}\r]^{1+\ezp}\\
    \noz&&\hs\hs\times\dfrac 1{V_1(x_1)+V(x_1,z)}
    \lf[\dfrac 1{1+d(x_1,z)}\r]^{1+\ezp}\,d\mu(z)\\
    \noz&&\hs\ls\dfrac {d(x,y)}{1+d(x_1,x)}\dfrac 1{V_1(x_1)+V(x_1,x)}
    \lf[\dfrac 1{1+d(x_1,x)}\r]^\ezp,
\end{eqnarray}
where the implicit constants depend on $L$ and $N$.
The estimates \eqref{3.6} and \eqref{3.7} further imply that if
$d(x,y)\le [1+d(x,x_1)]/2$, then
\begin{eqnarray}\label{3.8}
    &&|D_k(\psi_N)(x)-D_k(\psi_N)(y)|\\
    \noz&&\hs\ls\dfrac {d(x,y)}{1+d(x_1,x)}\dfrac 1{V_1(x_1)+V(x_1,x)}
    \lf[\dfrac 1{1+d(x_1,x)}\r]^\ezp
\end{eqnarray}
with the implicit constant depending on $L$ and $N$.

Observe that for $k\in\{0, 1, \cdots, L\}$, $\wdk(\cdot,z)\in\cg(z,
1,\wez,\wez)$ with its norm depending on $L$. By \eqref{3.6} with
$\ezp=\wez$, we obtain that for all $x\in\cx$,
\begin{eqnarray}\label{3.9}
|\wdk D_k(\psi_N)(x)|\ls&&\dint_\cx\dfrac 1{V_1(x)+V(x,z)}
    \lf[\dfrac 1{1+d(x,z)}\r]^\wez\\
    \noz&&\times\dfrac 1{V_1(x_1)+V(x_1,z)}
    \lf[\dfrac 1{1+d(x_1,z)}\r]^\wez\,d\mu(z)\\
    \noz\ls&&\dfrac 1{V_1(x_1)+V(x_1,x)}
    \lf[\dfrac 1{1+d(x_1,x)}\r]^\wez
\end{eqnarray}
with the implicit constant depending on $L$ and $N$,
and that for all $x,\ y\in\cx$ with $d(x,y)\le [1+d(x,x_1)]/4$,
\begin{eqnarray*}
&&|\wdk D_k(\psi_N)(x)-\wdk D_k(\psi_N)(y)|\\
&&\hs=\lf|\dint_\cx[\wdk(x,z)-\wdk(y,z)]
[D_k(\psi_N)(z)-D_k(\psi_N)(x)]\,d\mu(z)\r|\\
&&\hs=\dsum_{i=1}^3\lf|\dint_{W_i}[\wdk(x,z)-\wdk(y,z)]
[D_k(\psi_N)(z)-D_k(\psi_N)(x)]\,d\mu(z)\r|\equiv I_1+I_2+I_3,
\end{eqnarray*}
where $W_1\equiv\{z\in\cx:\ d(x,y)\le [1+d(x,z)]/2\le [1+d(x,x_1)]/4\},$
$$W_2\equiv\{z\in\cx:\ d(x,y)\le [1+d(x,x_1)]/4\le [1+d(x,z)]/2\},$$
and $W_3\equiv\{z\in\cx:\ d(x,y)>[1+d(x,z)]/2\}$. For $I_1$, by
\eqref{3.6} and \eqref{3.8}, we have
\begin{eqnarray*}
I_1&\ls&\dint_\cx\lf[\dfrac {d(x,y)}{1+d(x,z)}\r]^\wez\dfrac 1{V_1(x)+V(x,z)}
    \lf[\dfrac 1{1+d(x,z)}\r]^\wez\\
    &&\times\lf[\dfrac {d(x,z)}{1+d(x_1,x)}\r]^\wez\dfrac 1{V_1(x_1)+V(x_1,x)}
    \lf[\dfrac 1{1+d(x_1,x)}\r]^\wez\,d\mu(z)\\
    &\ls&\lf[\dfrac {d(x,y)}{1+d(x_1,x)}\r]^\wez\dfrac 1{V_1(x_1)+V(x_1,x)}
    \lf[\dfrac 1{1+d(x_1,x)}\r]^\wez,
\end{eqnarray*}
where all implicit constants depend on $L$ and $N$,
and we used the following estimate that for all $x\in\cx$,
\begin{equation}\label{3.10}
    \dint_\cx\dfrac 1{V_1(x)+V(x,z)}
    \lf[\dfrac 1{1+d(x,z)}\r]^\wez\,d\mu(z)\ls 1.
\end{equation}

To estimate $I_2$, by \eqref{3.6} and \eqref{3.10}, we obtain
\begin{eqnarray*}
I_2&\ls&\dint_\cx\lf[\dfrac {d(x,y)}{1+d(x,z)}\r]^\wez\dfrac 1{V_1(x)+V(x,z)}
    \lf[\dfrac 1{1+d(x,z)}\r]^\wez\\
    &&\times\lf\{\dfrac 1{V_1(x_1)+V(x_1,z)}
    \lf[\dfrac 1{1+d(x_1,z)}\r]^{2\wez}+\dfrac 1{V_1(x_1)+V(x_1,x)}
    \lf[\dfrac 1{1+d(x_1,x)}\r]^{2\wez}\r\}\,d\mu(z)\\
    &\ls&\lf[\dfrac {d(x,y)}{1+d(x_1,x)}\r]^\wez\dfrac 1{V_1(x_1)+V(x_1,x)}
    \lf[\dfrac 1{1+d(x_1,x)}\r]^\wez,
\end{eqnarray*}
where the implicit constants depend on $L$ and $N$.

If $z\in W_3$, then $d(x,z)<2d(x,y)\le [1+d(x,x_1)]/2$, which
together with \eqref{3.8} and \eqref{3.10} implies that
\begin{eqnarray*}
I_3&\ls&\dint_\cx\lf[|\wdk(x,z)|+|\wdk(y,z)|\r]
\dfrac {d(x,z)}{1+d(x_1,x)}\dfrac 1{V_1(x_1)+V(x_1,x)}
    \lf[\dfrac 1{1+d(x_1,x)}\r]^\wez\,d\mu(z)\\
    &\ls&\lf[\dfrac {d(x,y)}{1+d(x_1,x)}\r]^\wez\dfrac 1{V_1(x_1)+V(x_1,x)}
    \lf[\dfrac 1{1+d(x_1,x)}\r]^\wez,
\end{eqnarray*}
where the implicit constants depend on $L$ and $N$.

Thus, for all $x,\ y\in\cx$ with $d(x,y)\le [1+d(x,x_1)]/4$,
\begin{eqnarray*}
&&|\wdk D_k(\psi_N)(x)-\wdk D_k(\psi_N)(y)|\\
&&\hs\ls\lf[\dfrac {d(x,y)}{1+d(x_1,x)}\r]^\wez\dfrac 1{V_1(x_1)+V(x_1,x)}
    \lf[\dfrac 1{1+d(x_1,x)}\r]^\wez,
\end{eqnarray*}
which together with \eqref{3.9} further implies that this estimate
also holds for all $x,\ y\in\cx$ with $d(x,y)\le [1+d(x,x_1)]/2$.
Thus, $\{\wdk D_k(\psi_N)\}_{k=0}^L\in\cg(\wez,\wez)$
with their norms depending on $L$ and $N$ and hence
\eqref{3.1} holds. This shows that
$\cg^\ez_0(\bz,\gz)=\cg^\wez_0(\bz,\gz)$.

To finish the proof of Theorem \ref{t1.1}, we still need to
show that
\begin{equation}\label{3.11}
\ocg^\ez_0(\bz,\gz)
\subset\ocg^\wez_0(\bz,\gz).
\end{equation}
To this end, fix any $\psi\in\ocg(\ez,\ez)$ and let $\phi$ be as in
\eqref{3.2}. For any $N\in\nn$, let
$$\psi_N(x)\equiv\psi(x)\phi\lf(\frac {d(x,x_1)}N\r).$$
Let $\{S_k\}_{k\in\zz}$ be as in Lemma \ref{l3.1}. We then define
$$g_N(x)\equiv\psi_N(x)-\lf\{\dint_\cx\psi_N(z)\,d\mu(z)\r\}S_0(x,x_1).$$
Then
\begin{equation}\label{3.12}
    \int_\cx g_N(x)\,d\mu(x)=0,
\end{equation}
and since $\int_\cx\psi(x)\,d\mu(x)=0$, we have
\begin{eqnarray}\label{3.13}
\quad\lf|\dint_\cx\psi_N(z)\,d\mu(z)\r|
&=&\lf|\dint_\cx[\psi(z)-\psi_N(z)]\,d\mu(z)\r|\\
\noz&\ls&\|\psi\|_{\cg(\ez,\ez)}\int_{d(x,x_1)\ge N}\dfrac 1{V_1(x_1)+V(x_1,z)}
\lf[\dfrac 1{1+d(x_1,z)}\r]^\ez\,d\mu(z)\\
\noz&\ls&\dfrac 1{N^{\ezp}}\|\psi\|_{\cg(\ez,\ez)},
\end{eqnarray}
where $\ezp\in (0,\ez)$. From \eqref{3.3}, \eqref{3.4} and
\eqref{3.13}, it follows that
\begin{equation*}
    \|\psi-g_N\|_{\cg(\bz,\gz)}\ls\|\psi\|_{\cg(\ez,\ez)}
    \lf\{\frac 1{N^{\ez-\bz}}+\dfrac 1{N^{(1-\bz)\wg(\ez-\gz)}}
    +\dfrac 1{N^{\ezp}}\r\}\to 0,
\end{equation*}
as $N\to\fz$. By \eqref{3.12} and noticing that $g_N$ has bounded
support, it is easy to see that $g_N\in\ocg(\ez,\ez')$ for any
$\ezp>0$. In particular, $g_N\in\ocg^\ez_0(\bz,\gz)$. With the
notation same as in Lemma \ref{l3.1}, by Lemma \ref{l3.1}, we have
\begin{equation*}g_N=\dsum^\fz_{k=-\fz}\wdk D_k(g_N)\end{equation*}
in $\ocg^\ez_0(\bz,\gz)$, which means that as $L\to\fz$,
\begin{equation*}
    \lf\|g_N-\dsum^L_{k=-L}\wdk D_k(g_N)\r\|_{\cg(\bz,\gz)}\to 0,
\end{equation*}
where $\wdk(\cdot, y)\in\ocg(y, 2^{-k},\ezp,\ezp)$ for any
$\ezp\in(\wez,1)$. An argument similar to \eqref{3.5} gives that
$\sum^L_{k=-L}\wdk D_k(g_N)\in\ocg(\wez,\wez)$ with its norm
depending on $N$ and $L$, which completes the proof of \eqref{3.11}
and hence  the proof of Theorem \ref{t1.1}.
\end{proof}

\section{New characterizations of $B^s_{p,\,q}(\cx)$ and $F^s_{p,\,q}(\cx)$}
\label{s4}

\hskip\parindent This section is devoted to the proof of Theorem \ref{t1.2}.
We first recall
the notions of homogeneous Besov spaces $\db$ and
Triebel-Lizorkin spaces $\df$,
and the inhomogeneous Besov spaces $\b$ and Triebel-Lizorkin spaces $\f$ introduced
in \cite{hmy2}.
We point out that when we mention
the homogeneous Besov spaces $\db$ and
Triebel-Lizorkin spaces $\df$,
we always assume  that $\mu(\cx)=\fz$ since they are
well-defined only when $\mu(\cx)=\fz$.
Denote by $n$ the ``dimension" of $\cx$; see Section \ref{s2}.

\begin{defn}\rm\label{d4.1}
Let $\mu(\cx)=\fz$, $\ez\in (0,1)$ and $\{S_k\}_{k\in\zz}$ be an
approximation of the identity of order $\ez$ with bounded support.
For $k\in\zz$, set $D_k\equiv S_k-S_{k-1}$. Let $0<s<\ez$.

(i) Let $n/(n+\ez)<p\le\fz$ and $0<q\le\fz$. The {\it homogeneous
Besov space $\db$} is defined to be the set of all
$f\in(\ocg^\ez_0(\bz,\gz))'$ for some $\bz,\,\gz$ satisfying
\begin{equation}\label{2.5}
s<\bz<\ez\ \mathrm{and}\ \max\{s-\kz/p,\ n(1/p-1)_+\}<\gz<\ez
\end{equation}
such that
$$\|f\|_\db\equiv\lf\{\sum^\fz_{k=-\fz}2^{ksq}\|D_k(f)\|_\lp^q\r\}^{1/q}<\fz$$
with the usual modifications made when $p=\fz$ or $q=\fz$.

(ii) Let $n/(n+\ez)<p<\fz$ and $n/(n+\ez)<q\le\fz.$ The {\it
homogeneous Triebel-Lizorkin space $\df$} is defined to be the set
of all $f\in(\ocg^\ez_0(\bz,\gz))'$ for some $\bz,\ \gz$
satisfying \eqref{2.5} such that
$$\|f\|_\df\equiv\lf\|\lf\{\sum^\fz_{k=-\fz}
2^{ksq}|D_k(f)|^q\r\}^{1/q}\r\|_\lp<\fz$$ with the usual
modification made when $q=\fz$.
\end{defn}

To define the inhomogeneous Besov and Triebel-Lizorkin spaces,
we need to first recall
the following construction given by Christ in \cite{ch90}, which
provides an analogue of the set of Euclidean dyadic cubes on spaces
of homogeneous type.

\begin{lem}\label{l4.1}
Let $\cx$ be a space of homogeneous type. Then there exists a
collection $\{Q^k_\az\subset\cx:\ k\in\zz,\ \az\in I_k\}$ of open
subsets, where $I_k$ is some index set, and constants $\dz\in (0,1)$
and $C_5,\ C_6>0$ such that
\begin{enumerate}
\vspace{-0.2cm}
\item[(i)] $\mu(\cx\setminus \cup_\az Q^k_\az)=0$ for each fixed $k$
and $Q^k_\az\cap Q^k_\bz=\emptyset$ if $\az\ne\bz$; \vspace{-0.2cm}
\item[(ii)] for any $\az,\ \bz,\ k$ and $l$ with $l\ge k,$ either $Q_\bz^l
\subset Q^k_\az$ or $Q^l_\bz\cap Q^k_\az=\emptyset$; \vspace{-0.2cm}
\item[(iii)] for each $(k,\az)$ and each $l<k$,
there exists a unique $\bz$ such that $Q_\az^k\subset Q^l_\bz$;
\vspace{-0.2cm}
\item[(iv)] $\diam (Q_\az^k)\le C_5\dz^k$;
\vspace{-0.2cm}
\item[(v)] each $Q_\az^k$ contains some ball $B(z^k_\az, C_6\dz^k)$,
where $z^k_\az\in\cx$.
\end{enumerate}
\end{lem}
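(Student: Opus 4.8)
The plan is to reproduce Christ's original construction from \cite{ch90}, adapted to the metric setting. First I would fix a parameter $\dz\in(0,1)$ small enough (any $\dz$ with, say, $0<\dz\le 1/100$ will do, once the geometric constants are pinned down) and, for each $k\in\zz$, choose a maximal $\dz^k$-separated subset $\{z^k_\az\}_{\az\in I_k}$ of $\cx$. Maximality gives that the balls $\{B(z^k_\az,\dz^k/2)\}_\az$ are pairwise disjoint while $\{B(z^k_\az,\dz^k)\}_\az$ cover $\cx$; separation plus the doubling property \eqref{1.1} bounds the number of points $z^k_\bz$ within any fixed multiple of $\dz^k$ of a given point, which is the basic geometric control used throughout. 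The ``centers'' $z^k_\az$ will be the points in (v).

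Next I would build a partial order on the index set $\bigsqcup_k I_k$: for $l\ge k$, declare $(l,\bz)\preceq(k,\az)$ if the chain of nearest-ancestor assignments leads from $z^l_\bz$ up to $z^k_\az$. Concretely, for each $(l,\bz)$ with $l>k$ one picks a parent among the $z^{l-1}_{\az}$ closest to $z^l_\bz$ (breaking ties by a fixed well-ordering of the index sets), iterates, and defines the cube as
\begin{equation*}
Q^k_\az\equiv\mathrm{int}\lf(\bigcup\{\,\oz{B(z^l_\bz,\dz^l/3)}:\ (l,\bz)\preceq(k,\az)\,\}\r)
\end{equation*}
(up to a null set; the precise recipe in \cite{ch90} takes closures of the small balls and then interiors). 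The bulk of the work is the verification, by induction on the generation gap, that the diameter bound (iv) holds with a geometric series argument: a descendant at level $l$ sits within $\sum_{j\ge k}C\dz^j\sim\dz^k$ of $z^k_\az$, giving $C_5$; and symmetrically that $Q^k_\az$ contains $B(z^k_\az,C_6\dz^k)$ once $\dz$ is small relative to the doubling constant, giving (v). Property (ii), the nesting dichotomy, is immediate from the tree structure of $\preceq$ once the geometric estimates show that distinct cubes at the same level are genuinely disjoint (their small defining balls are separated by comparable amounts, and the diameter bound keeps the union from overflowing); (iii) is the existence and uniqueness of the parent, which is built into the construction; and (i), that the cubes at each level cover $\cx$ up to measure zero, follows since the union of the $\oz{B(z^l_\bz,\dz^l/3)}$ over all descendants fills out $B(z^k_\az,C_6\dz^k)$ and more, while the boundaries form a null set.

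The main obstacle is the inductive geometric bookkeeping that simultaneously controls the diameter from above (iv) and the inscribed ball from below (v) while keeping the same-level cubes disjoint (part of (i) and all of (ii)): these three requirements constrain $\dz$, and one must choose $\dz$ small enough — depending only on $C_0$ from \eqref{1.1} — so that a descendant never wanders far enough to collide with a sibling's territory. Since this is verbatim Christ's theorem (and $\cx$ here is in particular a space of homogeneous type, which is all his argument uses), I would simply cite \cite[Theorem 11]{ch90} and recall the construction above, rather than reprove the estimates in detail.
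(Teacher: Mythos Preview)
Your proposal is correct and matches the paper's treatment: the paper does not prove Lemma \ref{l4.1} at all but simply recalls it as Christ's construction and cites \cite{ch90}, exactly as you ultimately suggest. Your sketch of the maximal $\dz^k$-net and tree-of-ancestors construction is faithful to Christ's argument, so there is nothing to correct.
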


In fact, we can think of $Q^k_\az$ as being a {\it dyadic cube} with
diameter rough $\dz^k$ and centered at $z^k_\az.$ In what follows,
to simplify our presentation, we always suppose $\dz=1/2$;
see \cite{hmy2} for more details.

In the following, for $k\in\zz$ and $\tau\in\ik$, we denote by
$Q_\tau^{k,\nu},$  $\nu=1,\ 2,\ \cdots, N(k,\tau),$ the set of all
cubes $Q_{\tau'}^{k+j}\subset Q_\tau^k,$ where $Q_\tau^k$ is the
dyadic cube as in Lemma \ref{l4.1} and $j$ is a fixed positive large
integer such that $2^{-j}C_5<1/3.$ Denote by $z_\tau^{k,\nu}$ the
``center" of $\qtn$ as in Lemma \ref{l4.1} and by $y_\tau^{k,\nu}$ a
point in $Q_\tau^{k,\nu}.$

\begin{defn}\rm\label{d4.2} Let $\ez\in (0,1)$ and $\{S_k\}_{k\in\zz_+}$
be an inhomogeneous approximation of the identity of order $\ez$
with bounded support as in Definition \ref{d3.1}.
Set $D_0\equiv S_0$ and $D_k\equiv
S_k-S_{k-1}$ for $k\in\nn$.
Let $\{\qto:\ \tau\in I_0,\
\nu=1,\cdots, N(0,\tau)\}$ with a fixed large $j\in\nn$ be dyadic
cubes as above.
Let $0<s<\ez$.

(i) Let $n/(n+\ez)<p\le\fz$ and $0<q\le\fz$. The {\it Besov space
$\b$} is defined to be the set of all $f\in(\cg^\ez_0(\bz,\gz))'$
for some $\bz,\,\gz$ satisfying
\begin{equation}\label{4.2}
s<\bz<\ez\ \mathrm{and}\ n(1/p-1)_+<\gz<\ez
\end{equation}
such that
\begin{eqnarray*}
\|f\|_\b&\equiv&\lf\{\dsum_{\tau\in I_0}\sum_{\nu=1}^{N(0,\,\tau)}
\mu(Q^{0,\,\nu}_\tau)\lf[m_{Q^{0,\,\nu}_\tau}(|D_0(f)|)\r]^p\r\}^{1/p}\\
&&+\lf\{\dsum^\fz_{k=1}2^{ksq}\|D_k(f)\|_\lp^q\r\}^{1/q}<\fz
\end{eqnarray*}
with the usual modifications made when $p=\fz$ or $q=\fz$.

(ii) Let $n/(n+\ez)<p<\fz$ and $n/(n+\ez)<q\le\fz.$ The {\it
Triebel-Lizorkin space $\f$} is defined to be the set of all
$f\in(\cg^\ez_0(\bz,\gz))'$ for some $\bz,\ \gz$ satisfying
\eqref{4.2} such that
\begin{eqnarray*}
\|f\|_\f&\equiv&\lf\{\dsum_{\tau\in I_0}\sum_{\nu=1}^{N(0,\,\tau)}
\mu(Q^{0,\,\nu}_\tau)\lf[m_{Q^{0,\,\nu}_\tau}(|D_0(f)|)\r]^p\r\}^{1/p}\\
&&+\lf\|\lf\{\dsum^\fz_{k=1}
2^{ksq}|D_k(f)|^q\r\}^{1/q}\r\|_\lp<\fz
\end{eqnarray*}
with the usual modification made when $q=\fz$.
\end{defn}

Recall that the local Hardy spaces
$h^p(\cx)\equiv F^0_{p,2}(\cx)$ when $p\in (n/(n+1),\fz)$; see
\cite{hmy2}.

\begin{defn}\rm\label{d4.3} Let $\ez\in (0,1)$ and $s\in (0,\ez)$.

(i) Assume that $\mu(\cx)=\fz$, and let $\{S_k\}_{k\in\zz}$ be an
approximation of the identity of order $\ez$ with bounded support.
For $k\in\zz$, set $D_k\equiv S_k-S_{k-1}$. Let $n/(n+\ez)<q\le\fz$.
The {\it Triebel-Lizorkin space $\dfy$} is defined to be the set of
all $f\in(\ocg^\ez_0(\bz,\gz))'$ for some $\bz,\ \gz$
satisfying $s<\bz,\ \gz<\ez$ such that
$$\|f\|_\dfy\equiv\dsup_{l\in\zz}\dsup_{\az\in I_l}\lf\{\dfrac 1{\mu(\qal)}
\dint_\qal\dsum^\fz_{k=l}2^{ksq}|D_k(f)(x)|^q\,d\mu(x)\r\}^{1/q}<\fz,$$
where the supremum is taken over all dyadic cubes as in Lemma
\ref{l4.1} and the usual modification is made when $q=\fz$.

(ii) Let $\{S_k\}_{k\in\zz_+}$ be an inhomogeneous approximation of
the identity of order $\ez$ with bounded support. Set $D_k\equiv
S_k-S_{k-1}$ for $k\in\nn$ and $D_0\equiv S_0$.
Let $\{\qto:\ \tau\in I_0,\
\nu=1,\cdots, N(0,\tau)\}$ with a fixed large $j\in\nn$ be dyadic
cubes as above.
Let $0<s<\ez$ and $n/(n+\ez)<q\le\fz$. The
{\it Triebel-Lizorkin space $\fy$} is defined to be the set of all
$f\in\lf(\cg^\ez_0(\bz,\gz)\r)'$ for some $\bz,\ \gz$ satisfying
$s<\bz<\ez$ and $0<\gz<\ez$ such that
\begin{eqnarray*}
\|f\|_\fy&\equiv&\max\lf\{\dsup_{\stz{\tau\in\io}{\nu=1,\,\cdots,N(0,\,\tau)}
}m_{Q^{0,\,\nu}_\tau}(|D_0(f)|),\r.\\
&&\hspace{1.5cm}\lf.\dsup_{l\in\nn}\dsup_{\az\in I_l}\lf[\dfrac
1{\mu(\qal)}
\dint_\qal\dsum^\fz_{k=l}2^{ksq}|D_k(f)(x)|^q\,d\mu(x)\r]^{1/q}\r\}<\fz,
\end{eqnarray*}
where the supremum is taken over all dyadic cubes as in Lemma
\ref{l4.1}, and the usual modification is made when $q=\fz$.
\end{defn}

For a given $\ez\in (0,1)$, it was proved in \cite{hmy2} that the
definitions of the spaces $\db$, $\df$, $\dfy$, $\b$, $\f$ and $\fy$
are independent of the choices of the approximation of the identity
and the distribution space, $(\ocg^\ez_0(\bz,\gz))'$ with
suitable $\bz,\ \gz$, respectively, the inhomogeneous approximation
of the identity and the distribution space,
$(\cg^\ez_0(\bz,\gz))'$ with suitable $\bz,\ \gz$ as in the
above definitions. From Theorem \ref{t1.1},
it is easy to see that the definitions of these spaces are also
independent of $\ez$.
\begin{cor}\label{c4.1}
Let $p\in (n/(n+1), \fz]$ and $s\in (0,1)$.

(i) If $\ez\in (\max\{s, n(1/p-1)_+\},1)$ and $q\in (0, \fz]$, then
the definitions of the spaces $\db$ and $\b$ are independent of the
choices of $\ez$ as above, the approximation of the identity and the
distribution space, $(\ocg^\ez_0(\bz,\gz))'$ with $\bz,\ \gz$
as in \eqref{2.5}, respectively, the inhomogeneous approximation of
the identity and the distribution space,
$\lf(\cg^\ez_0(\bz,\gz)\r)'$ with $\bz,\ \gz$ as in \eqref{4.2}.

(ii) If $q\in (n/(n+1), \fz]$ and $\ez\in (\max\{s, n(1/p-1)_+,
n(1/q-1)_+\},1)$, then the definitions of the spaces $\df$ and $\f$
are independent of the choices of $\ez$ as above, the approximation
of the identity and the distribution space,
$(\ocg^\ez_0(\bz,\gz))'$ with $\bz,\ \gz$ as in \eqref{2.5},
respectively, the inhomogeneous approximation of the identity and
the distribution space, $\lf(\cg^\ez_0(\bz,\gz)\r)'$ with $\bz,\
\gz$ as in \eqref{4.2}.
\end{cor}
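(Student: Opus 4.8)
The plan is to derive Corollary \ref{c4.1} essentially for free from Theorem \ref{t1.1} together with the results of \cite{hmy2}, so the ``proof'' is really a careful bookkeeping argument rather than new analysis. First I would recall the two inputs. On the one hand, \cite{hmy2} already tells us that, for each \emph{fixed} regularity $\ez\in(0,1)$, the spaces $\db$, $\b$, $\df$, $\f$ (and their $\dfy$, $\fy$ companions) do not depend on the particular choice of approximation of the identity of order $\ez$, nor on the particular pair $(\bz,\gz)$ satisfying \eqref{2.5} (respectively \eqref{4.2}) used to form the distribution space $(\ocg^\ez_0(\bz,\gz))'$ (respectively $(\cg^\ez_0(\bz,\gz))'$). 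On the other hand, Theorem \ref{t1.1} states that whenever $\ez,\wez\in(0,1)$ and $0<\bz,\gz<\ez\wg\wez$ we have the \emph{equality} of Banach spaces $\cg^\ez_0(\bz,\gz)=\cg^\wez_0(\bz,\gz)$ and $\ocg^\ez_0(\bz,\gz)=\ocg^\wez_0(\bz,\gz)$, with equal norms. Dualizing, this gives $(\cg^\ez_0(\bz,\gz))'=(\cg^\wez_0(\bz,\gz))'$ and $(\ocg^\ez_0(\bz,\gz))'=(\ocg^\wez_0(\bz,\gz))'$ as sets of distributions, with the pairing $\laz\cdot,\cdot\raz$ unchanged.

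Next I would fix $p$, $s$, $q$ as in the statement and take two admissible regularities $\ez,\wez$ lying in the prescribed interval $(\max\{s,n(1/p-1)_+\},1)$ for part (i), or $(\max\{s,n(1/p-1)_+,n(1/q-1)_+\},1)$ for part (ii); without loss of generality $\ez\le\wez$. The key point is that one can choose a single pair $(\bz,\gz)$ that is simultaneously admissible for \emph{both} $\ez$ and $\wez$: since $s<\ez$ and $\max\{s-\kz/p,n(1/p-1)_+\}<\ez$ (respectively $n(1/p-1)_+<\ez$), one may pick $\bz\in(s,\ez)$ and $\gz$ in the relevant open interval with upper endpoint $\ez$; then automatically $s<\bz<\ez\le\wez$ and $0<\gz<\ez\le\wez$, so $(\bz,\gz)$ satisfies \eqref{2.5} (respectively \eqref{4.2}) with $\ez$ replaced by $\wez$ as well, and moreover $\bz,\gz<\ez\wg\wez$, so Theorem \ref{t1.1} applies to this pair. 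Hence the ambient distribution space is literally the same object whether we use regularity $\ez$ or $\wez$.

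With the ambient space identified, I would compare the norms. Using regularity $\ez$, pick an approximation of the identity $\{S_k\}$ of order $\ez$ with bounded support; since order-$\ez$ kernels automatically also have order $\ez'$ for every $\ez'<\ez$, and in particular order $1$ in the sense relevant to Definitions \ref{d4.1}--\ref{d4.3} is not required — what matters is only that $0<s<\ez$ — the very same $\{S_k\}$ and the same $D_k\equiv S_k-S_{k-1}$ serve as admissible objects for computing $\|f\|_\db$ (or $\|f\|_\b$, etc.) under \emph{either} regularity convention, because in both cases $f$ is the same element of the same distribution space, $D_k(f)$ is the same function, and the defining quasi-norm expression is literally the same sum. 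Therefore the sets and (quasi-)norms defined with regularity $\ez$ and with regularity $\wez$ coincide on the nose. Finally, combining this with the already-known $\ez$-fixed independence from \cite{hmy2}, one concludes that for \emph{any} admissible $\ez$, \emph{any} admissible approximation of the identity, and \emph{any} admissible $(\bz,\gz)$, the same space with the same norm (up to equivalence) is obtained; this is exactly the assertion of Corollary \ref{c4.1}. The only mild subtlety — and the step I would be most careful about — is the verification that the interval of admissible $(\bz,\gz)$ pairs for two different $\ez$'s always has nonempty intersection lying below $\ez\wg\wez$, and that the $\dfy$/$\fy$ endpoint cases ($p=\fz$) fit the same pattern via their own parameter ranges $s<\bz,\gz<\ez$ and $s<\bz<\ez$, $0<\gz<\ez$; but since all these are open conditions with common lower bounds, the intersection argument goes through verbatim. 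No genuine obstacle arises; the content is entirely in Theorem \ref{t1.1}, which has already been proved.
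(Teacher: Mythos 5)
Your overall strategy is exactly the paper's (which only says ``From Theorem \ref{t1.1}, it is easy to see that the definitions of these spaces are also independent of $\ez$''): identify the ambient distribution spaces via Theorem \ref{t1.1} after choosing a common admissible pair $(\bz,\gz)$, then compute the defining quasi-norms with one common sequence $\{D_k\}$ and invoke the fixed-$\ez$ independence results of \cite{hmy2}. Your bookkeeping of the $(\bz,\gz)$-intersection and of the endpoint spaces $\dfy,\fy$ is fine.

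There is, however, one directional slip in the middle paragraph that would need fixing. Having normalized $\ez\le\wez$, you propose to ``pick an approximation of the identity $\{S_k\}$ of order $\ez$'' and then claim it serves as an admissible object ``under \emph{either} regularity convention.'' The monotonicity of the order goes the other way: by the bounded-support condition in Definition \ref{d3.1}, a kernel of order $\wez$ is automatically of order $\ez$ for every $\ez<\wez$ (since $2^k d(x,x')$ is bounded on the relevant region), but a kernel of order $\ez$ need not satisfy the stronger order-$\wez$ smoothness estimates. The independence-of-approximation result in \cite{hmy2} is stated for a \emph{fixed} order; it does not by itself license computing the $\wez$-norm with a merely order-$\ez$ kernel. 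The fix is trivial and is in fact what the paper implicitly does: take $\{S_k\}$ of order $1$ (its existence is \cite[Theorem 2.6]{hmy2}), or at least of order $\wez$; such a family is simultaneously an approximation of order $\ez$ and of order $\wez$, so the literal quasi-norm sums coincide, and then \cite{hmy2} gives the equivalence with any other admissible choice at each regularity. Your parenthetical ``what matters is only that $0<s<\ez$'' points in the right direction but is not itself a proof; replacing the order-$\ez$ choice by an order-$1$ choice makes the argument airtight, and then the proposal agrees with the paper.
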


\begin{rem}\label{r4.1}
(i) In the definitions of the spaces $\db$, $\df$, $\dfy$, $\b$,
$\f$ and $\fy$ as in Definitions \ref{d4.1}, \ref{d4.2} and
\ref{d4.3}, the approximations of the identity are not necessary to
have bounded support; see \cite{hmy2}. All the conclusions in
Corollary \ref{c4.1} are still true.

(ii) When $s\in (-1,0]$, the spaces $\db$ and $\b$ with $p\in
(n/(n+1), \fz]$ and $q\in (0,\fz]$ and the spaces $\df$ and $\f$
with $p,\ q\in (n/(n+1), \fz]$ are also well defined; see
\cite{hmy2}. Moreover, some conclusions similar
to Corollary \ref{c4.1} are also true for these spaces.

(iii) From now on, when we mention the spaces $\db$, $\df$, $\b$ and
$\f$ with $s\in (0,1)$, we always mean that we choose $\ez$ as in
Corollary \ref{c4.1} and then define these spaces as in Definitions
\ref{d4.1}, \ref{d4.2} and \ref{d4.3}.
\end{rem}

To prove Theorem \ref{t1.2}, we need
the following Calder\'on reproducing formula established in
\cite[Theorem 4.14]{hmy2}.

\begin{lem}\label{l4.2} Let $\ez\in (0,1)$ and
$\{S_k\}_{k\in\zz_+}$ be an inhomogeneous approximation of the
identity of order $1$ with bounded support. Set $D_0\equiv S_0$ and
$D_k\equiv S_k-S_{k-1}$ for $k\in\nn$. Then for any fixed $j$ large
enough, there exists a family $\{\wdk(x,y)\}_{k\in\zz_+}$ of functions
such that for any fixed $\ytn\in\qtn$
with $k\in\nn$, $\tau\in\ik$ and $\nu=1,\cdots,\nkt$ and all
$f\in\lf(\cg^\ez_0(\bz,\gz)\r)'$ with $0<\bz,\ \gz<\ez$ and $x\in\cx$,
\begin{eqnarray*}
f(x)&=&\dsum_{\tau\in
I_0}\dsum^\nz_{\nu=1}\dint_\qto\wdo(x,y)\,d\mu(y)
m_{Q^{0,\,\nu}_\tau}(D_0(f))\\
&&+\dsum^\fz_{k=1}\dsum_{\tau\in\ik}\dsum^\nkt_{\nu=1}
\mu(\qtn)\wdk(x,\ytn)D_k(f)(\ytn),
\end{eqnarray*}
where the series convergence in $\lf(\cg^\ez_0(\bz,\gz)\r)'$.
Moreover, for any $\ez'\in(\ez, 1)$, there exists a positive
constant $C$ depending on $\ez'$ such that the function $\wdk(x,y)$
satisfies (i) and (ii) of Lemma \ref{l3.1} and (iii)' of Lemma
\ref{l3.2} for $k\in\zz_+$.
\end{lem}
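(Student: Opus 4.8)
The plan is to follow the David--Journ\'e--Semmes discretization scheme, adapted to spaces of homogeneous type as in the work of Han and his collaborators; in fact the statement is precisely \cite[Theorem 4.14]{hmy2}, so in the end I would simply cite that, but the argument runs as follows. First I would start from the continuous inhomogeneous Calder\'on reproducing formula of Lemma \ref{l3.2}: with $\{S_k\}_{k\in\zz_+}$ the given inhomogeneous approximation of the identity of order $1$, $D_0\equiv S_0$ and $D_k\equiv S_k-S_{k-1}$ for $k\in\nn$, there is a family $\{E_k\}_{k\in\zz_+}$ of operators whose kernels $E_k(x,y)$ satisfy (i) and (ii) of Lemma \ref{l3.1} and (iii)$'$ of Lemma \ref{l3.2}, and such that $f=\sum_{k=0}^\fz E_kD_k(f)$ in $(\cg^\ez_0(\bz,\gz))'$ whenever $0<\bz,\gz<\ez$.

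Next I would discretize this identity. Fix $j\in\nn$ large with $2^{-j}C_5<1/3$, and for each $k$ decompose $\cx$ into the subcubes $\{\qtn:\,\tau\in\ik,\ \nu=1,\dots,\nkt\}$ of the Christ cubes of Lemma \ref{l4.1}, so that $\diam(\qtn)\ls 2^{-k-j}$. Writing
\[
E_kD_k(f)(x)=\dsum_{\tau\in\ik}\dsum_{\nu=1}^{\nkt}\dint_\qtn E_k(x,y)D_k(f)(y)\,d\mu(y),
\]
I would replace, for $k\in\nn$, the integrand $E_k(x,y)D_k(f)(y)$ on $\qtn$ by its value $E_k(x,\ytn)D_k(f)(\ytn)$ at a sample point $\ytn\in\qtn$, and for $k=0$ replace $E_0(x,y)D_0(f)(y)$ on $\qto$ by $\big(\int_\qto E_0(x,y)\,d\mu(y)\big)\,m_{\qto}(D_0(f))$. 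This produces an operator $T_N$ whose kernel is exactly the right-hand side of the asserted formula, with $E_k$ in place of the new $\wdk$, together with an error operator $R_N\equiv I-T_N$.

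The heart of the proof is to show that, for $j$ chosen large enough, $R_N$ is bounded on $\cg^\ez_0(\bz,\gz)$, hence also on its dual $(\cg^\ez_0(\bz,\gz))'$ and on $\lp$ for $p\in(1,\fz)$, with operator norm strictly less than $1$. This is a Cotlar/Schur-type almost-orthogonality estimate: using the extra smoothness of $E_k$ in the second variable with some exponent $\ez'\in(\ez,1)$, the smallness $\diam(\qtn)\ls 2^{-k-j}$, the cancellation $\int_\cx E_k(x,w)\,d\mu(w)=0$ for $k\in\nn$ (and the normalization for $k=0$), and the size, regularity and cancellation of $D_k$, one bounds the kernel of $R_N$ and extracts a gain $2^{-j\ez''}$ for some $\ez''>0$ uniformly over all scales, so that $\|R_N\|<1$ once $j$ is large; here the RD-space geometry, namely the doubling bound \eqref{2.2} and the reverse doubling bound \eqref{2.3}, is what powers the underlying integral estimates. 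Then $T_N=I-R_N$ is invertible, with $T_N^{-1}=\sum_{m=0}^\fz R_N^m$ bounded on all the relevant spaces, and writing $f=T_NT_N^{-1}f$ and transferring $T_N^{-1}$ onto the kernel one sets $\wdk(x,y)\equiv T_N^{-1}\big(E_k(\cdot,y)\big)(x)$ for $k\in\nn$, and the analogous function built from $\int_\qto E_0(\cdot,y)\,d\mu(y)$ for $k=0$; since $T_N^{-1}$ maps the test function space of type $(y,2^{-k},\ez',\ez')$ into itself with norm uniform in $y$ and $k$, and $T_N^{-1}-I$ preserves the relevant moment conditions, the new $\wdk$ inherits (i) and (ii) of Lemma \ref{l3.1} and (iii)$'$ of Lemma \ref{l3.2}, which gives the formula with convergence in $(\cg^\ez_0(\bz,\gz))'$.

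I expect the main obstacle to be exactly the norm bound $\|R_N\|<1$ in the third step: one must control the discretization error at all dyadic scales simultaneously and extract a single uniform power of $2^{-j}$, which requires careful bookkeeping with the kernel estimates and the geometry of $\cx$. Everything else is routine bookkeeping once Lemmas \ref{l3.1}, \ref{l3.2} and \ref{l4.1} are available.
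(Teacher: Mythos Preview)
Your proposal is correct and matches the paper exactly: the paper does not prove Lemma \ref{l4.2} at all but simply records it as \cite[Theorem 4.14]{hmy2}, which is precisely what you propose to do. Your accompanying sketch of the David--Journ\'e--Semmes discretization argument is a faithful outline of how that theorem is proved in \cite{hmy2}, so there is nothing to add.
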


\begin{proof}[Proof of Theorem \ref{t1.2}]
For $k\in\zz$, let $D_k\equiv S_k-S_{k-1}.$ We also choose
$\ez\in (s,1)$. We first show (i). If $f\in\sh$ and \eqref{1.2}
holds, by Definition \ref{d4.2}, there exist  $\bz$ and $\gz$ as in
\eqref{4.2} such that $f\in(\cg^\ez_0(\bz,\gz))'$ and
\begin{eqnarray*}
    \|f\|_\sh&\equiv&\lf\{\dsum_{\tau\in I_0}\dsum^\nz_{\nu=1}
\mu(\qto)\lf[m_\qto(|D_0(f)|)\r]^p\r\}^{1/p}\\
\noz&&+\lf\|\lf\{\dsum^\fz_{k=1}|D_k(f)|^2\r\}^{1/2}\r\|_\lp.
\end{eqnarray*}
From this and Definition \ref{d4.2} together with Corollary \ref{c4.1},
it follows that
\begin{eqnarray*}
\|f\|_\b&\sim&\lf\{\dsum_{\tau\in I_0}\dsum^\nz_{\nu=1}
\mu(\qto)\lf[m_\qto(|D_0(f)|)\r]^p\r\}^{1/p}\\
&&+\lf\{\dsum^\fz_{k=1}2^{ksq}\|D_k(f)\|_\lp^q\r\}^{1/q}
\ls\|f\|_\sh+J_1<\fz,
\end{eqnarray*}
where $J_1$ is as in Theorem \ref{t1.2}.
Thus, $f\in\b$.

Conversely, assume that $f\in\b$. By Definition \ref{d4.2} again, we
know that $f\in(\cg^\ez_0(\bz,\gz))'$ for some $\bz,\ \gz$ as in
\eqref{4.2}. Recall that for all $\{a_j\}_j\subset\cc$ and $r\in
(0,1]$,
\begin{equation}\label{4.3}
    \lf(\sum_j|a_j|\r)^r\le\sum_j|a_j|^r.
\end{equation}
If $p/2\le 1$, by \eqref{4.3}, we have
\begin{eqnarray*}
\lf\|\lf\{\dsum^\fz_{k=1}|D_k(f)|^2\r\}^{1/2}\r\|_\lp
&\ls&\lf\{\dsum^\fz_{k=1}\|D_k(f)\|_\lp^p\r\}^{1/p}.
\end{eqnarray*}
From this, \eqref{4.3} when $q/p\le 1$ and the H\"older inequality
when $q/p>1$ together with the assumption that $s>0$, it further
follows that
\begin{eqnarray*}
\lf\|\lf\{\dsum^\fz_{k=1}|D_k(f)|^2\r\}^{1/2}\r\|_\lp
&\ls&\lf\{\dsum^\fz_{k=1}2^{ksq}\|D_k(f)\|_\lp^q\r\}^{1/q}
\ls\|f\|_\b,
\end{eqnarray*}
which together with Definition \ref{d4.2} shows that
$f\in\sh$ and $\|f\|_\sh\ls\|f\|_\b$.

If $p/2>1$, by the H\"older inequality and the assumption that $s>0$,
we have
\begin{eqnarray*}
\lf\|\lf\{\dsum^\fz_{k=1}|D_k(f)|^2\r\}^{1/2}\r\|_\lp
&\ls&\lf\{\dsum^\fz_{k=1}2^{ksp/2}\|D_k(f)\|_\lp^p\r\}^{1/p}.
\end{eqnarray*}
Then an argument similar to the case $p/2\le 1$ also yields
that $f\in\sh$ and $\|f\|_\sh\ls\|f\|_\b$.

On $J_1$, we have
\begin{eqnarray*}
J_1&\ls&\lf\{\dsum^0_{k=-\fz}2^{ksq}\|D_k(f)\|_\lp^q\r\}^{1/q}
+\lf\{\dsum^\fz_{k=1}2^{ksq}\|D_k(f)\|_\lp^q\r\}^{1/q}\\
&\ls& Z_1+\|f\|_\b,
\end{eqnarray*}
where $$Z_1\equiv\lf\{\dsum^0_{k=-\fz}2^{ksq}\|D_k(f)\|_\lp^q\r\}^{1/q}.$$
Using the notation as in Lemma \ref{l4.2}, since
$f\in(\cg^\ez_0(\bz,\gz))'$, by Lemma \ref{l4.2}, for all $z\in\cx$, we have
\begin{eqnarray*}
f(z)&=&\dsum_{\tau'\in\io}\dsum^\nzp_{\nu'=1}\dint_\qtop\wdo(z,y)\,d\mu(y)
m_{Q^{0,\nu'}_{\tau'}}(D_0(f))\\
&&+\dsum^\fz_{k'=1}\dsum_{\tau'\in\ikp}\dsum^\nktp_{\nu'=1}\mu(\qtnp)
\wdkp(z,\ytnp)\dkp(f)(\ytnp)
\end{eqnarray*}
in $(\cg^\ez_0(\bz,\gz))'$. Obviously, $D_k(x,\cdot)\in\cg^\ez_0(\bz,\gz)$.
Thus, we obtain that for all $x\in\cx$,
\begin{eqnarray}\label{4.4}
D_k(f)(x)&=&\dsum_{\tau'\in\io}\dsum^\nzp_{\nu'=1}\dint_\qtop(D_k\wdo)(x,y)\,d\mu(y)
m_{Q^{0,\nu'}_{\tau'}}(D_0(f))\\
\noz&&+\dsum^\fz_{k'=1}\dsum_{\tau'\in\ikp}\dsum^\nktp_{\nu'=1}\mu(\qtnp)
(D_k\wdkp)(x,\ytnp)\dkp(f)(\ytnp)\\
\noz&\equiv&I_1+I_2.
\end{eqnarray}
For any $\ezp\in (\ez,1)$, $k\le 0$ and $\ytop\in\qtop$, by the size conditions of
$\wdo$ and $S_k$, for all $x,\,y\in\cx$, we have
\begin{eqnarray*}
|(D_k\wdo)(x,y)|&=&\lf|\dint_\cx D_k(x,z)\wdo(z,y)\,d\mu(z)\r|\\
\noz&\ls&\dint_{d(x,z)\ge d(x,y)/2}\frac 1{V_{2^{-k}}(x)+V(x,z)}
    \frac {2^{-k\ezp}}{[2^{-k}+d(x,z)]^\ezp}\\
\noz&&\times\frac 1{V_1(z)+V(z,y)}
    \frac 1{[1+d(z,y)]^\ezp}\,d\mu(z)+\dint_{d(z,y)>d(x,y)/2}\cdots\\
\noz&\ls&\frac 1{V_{2^{-k}}(x)+V(x,y)}
    \frac {2^{-k\ezp}}{[2^{-k}+d(x,y)]^\ezp}\\
\noz&&+\frac 1{V_1(x)+V(x,y)}
    \frac 1{[1+d(x,y)]^\ezp}\\
\noz&\ls&\frac 1{V_{2^{-k}}(x)+V(x,\ytop)}
    \frac {2^{-k\ezp}}{[2^{-k}+d(x,\ytop)]^\ezp}\\
\noz&&+\frac 1{V_1(x)+V(x,\ytop)}
    \frac 1{[1+d(x,\ytop)]^\ezp}.
\end{eqnarray*}
From this and Lemma 5.3 of \cite{hmy2} with $n/(n+\ezp)<r\le 1$, it follows
that for all $x\in\cx$,
\begin{eqnarray}\label{4.5}
|I_1|&\ls&\dsum_{\tau'\in I_0}\dsum^\nzp_{\nu'=1}
\mu(\qtop)m_\qtop(|S_0(f)|)\\
\noz&&\times\lf\{\frac 1{V_{2^{-k}}(x)+V(x,\ytop)}
    \frac {2^{-k\ezp}}{[2^{-k}+d(x,\ytop)]^\ezp}\r.\\
\noz&&\lf.+\frac 1{V_1(x)+V(x,\ytop)}
    \frac 1{[1+d(x,\ytop)]^\ezp}\r\}\\
\noz&\ls&2^{kn(1-1/r)}\lf\{M\lf(\dsum_{\tau'\in I_0}\dsum^\nzp_{\nu'=1}
\lf[m_\qtop(|S_0(f)|)\r]^r\chi_\qtop\r)(x)\r\}^{1/r}.
\end{eqnarray}
If we choose $r<p$, by \eqref{4.5} and the
$L^{p/r}(\cx)$-boundedness of $M$, we have
\begin{eqnarray*}
&&\lf\{\dsum^0_{k=-\fz}2^{ksq}\|I_1\|_\lp^q\r\}^{1/q}\\
&&\hs\ls\lf\{\dsum^0_{k=-\fz}2^{k[s+n(1-1/r)]q}\r.\\
&&\hs\hs\times\lf.\lf[\dint_\cx
\lf\{M\lf(\dsum_{\tau'\in I_0}\dsum^\nzp_{\nu'=1}
\lf[m_\qtop(|S_0(f)|)\r]^r\chi_\qtop\r)(x)\r\}^{p/r}\,d\mu(x)\r]^{q/p}\r\}^{1/q}\\
&&\hs\ls\lf(\dsum_{\tau'\in I_0}\dsum^\nzp_{\nu'=1}
\mu(\qtop)\lf[m_\qtop(|S_0(f)|)\r]^p\r)^{1/p}\ls\|f\|_\b.
\end{eqnarray*}

To estimate $I_2$, we first recall that by \cite[Lemma 3.2]{hmy2},
for all $x\in\cx$ and $k\le k'$,
\begin{eqnarray}\label{4.6}
    |(D_k\wdkp)(x,\ytnp)|&\ls& 2^{-(k'-k)\ezp}
    \dfrac 1{V_{2^{-k}}(x)+V_{2^{-k}}(\ytnp)+V(x,\ytnp)}\\
    \noz&&\times\dfrac {2^{-k\ezp}}{[2^{-k}+d(x,\ytnp)]^\ezp}.
\end{eqnarray}
Notice that if $x\in\qtn$, then
\begin{equation}\label{4.7}
V_{2^{-k}}(\ytnp)+V(x,\ytnp)\sim V_{2^{-k}}(\ytnp)+V(\ytn,\ytnp)
\end{equation}
and
\begin{equation}\label{4.8}
    2^{-k}+d(x,\ytnp)\sim 2^{-k}+d(\ytn,\ytnp).
\end{equation}
From \eqref{4.7}, \eqref{4.8}, \eqref{4.3} and Lemma 5.2 of \cite{hmy2},
it follows that when $n/(n+s)<p\le 1$,
\begin{eqnarray*}
&&\lf\{\dsum^0_{k=-\fz}2^{ksq}\|I_2\|_\lp^q\r\}^{1/q}\\
&&\hs\ls\lf(\dsum^0_{k=-\fz}2^{ksq}\lf[\dsum_{\tau\in\ik}
\dsum^\nkt_{\nu=1}\mu(\qtn)\lf\{\dsum^\fz_{k'=1}
\dsum_{\tau'\in\ikp}\dsum^\nktp_{\nu'=1}2^{-(k'-k)\ezp}\mu(\qtnp)\r.\r.\r.\\
&&\hs\hs\lf.\lf.\lf.\times|\dkp(f)(\ytnp)|
\dfrac 1{V_{2^{-k}}(\ytnp)+V(\ytn,\ytnp)}\dfrac {2^{-k\ezp}}
{[2^{-k}+d(\ytn,\ytnp)]^\ezp}\r\}^p\r]^{q/p}\r)^{1/q}\\
&&\hs\ls\lf\{\dsum^0_{k=-\fz}2^{ksq}\lf(\dsum^\fz_{k'=1}
\dsum_{\tau'\in\ikp}\dsum^\nktp_{\nu'=1}2^{-(k'-k)\ezp p}\lf[\mu(\qtnp)
|\dkp(f)(\ytnp)|\r]^p\r.\r.\\
&&\hs\hs\lf.\times\lf[V_{2^{-k}}(\ytnp)\r]^{1-p}\Bigg)^{q/p}\r\}^{1/q}\\
&&\hs\ls\lf\{\dsum^0_{k=-\fz}\lf(\dsum^\fz_{k'=1}
2^{-(k'-k)[\ezp+s-n(1/p-1)]p}2^{k'sp}\r.\r.\\
&&\hs\hs\lf.\lf.\times\dsum_{\tau'\in\ikp}\dsum^\nktp_{\nu'=1}\mu(\qtnp)
|\dkp(f)(\ytnp)|^p\r)^{q/p}\r\}^{1/q}\\
&&\hs\ls\lf\{\dsum^\fz_{k'=1}2^{k'sq}\|\dkp(f)\|_\lp^q\r\}^{1/q}
\ls\|f\|_\b,
\end{eqnarray*}
where in the third-to-last inequality, we used the fact that
\begin{equation*}
V_{2^{-k}}(\ytnp)\ls 2^{(k'-k)n}V_{2^{-k'}}(\ytnp)\sim 2^{(k'-k)n}
\mu(\qtnp),
\end{equation*}
and in the penultimate inequality, we used the arbitrariness of
$\ytnp\in\qtnp$, \eqref{4.3} when $q/p\le 1$, or the H\"older
inequality when $q/p>1$.

From the arbitrariness of $\ytnp\in\qtnp$ again, it is easy to see
that
\begin{eqnarray*}
    &&\dsum^\fz_{k'=1}2^{-(k'-k)(\ezp+s)}
    \dsum_{\tau'\in\ikp}\dsum^\nktp_{\nu'=1}\mu(\qtnp)
    \dfrac 1{V_{2^{-k}}(x)+V(x,\ytnp)}\dfrac {2^{-k\ezp}}
{[2^{-k}+d(x,\ytnp)]^\ezp}\\
&&\hs\ls\dsum^\fz_{k'=1}2^{-(k'-k)(\ezp+s)}\dint_\cx
\dfrac 1{V_{2^{-k}}(x)+V(x,y)}\dfrac {2^{-k\ezp}}
{[2^{-k}+d(x,y)]^\ezp}\,\mu(y)\ls 1.
\end{eqnarray*}
By this estimate, the H\"older inequality, \eqref{3.7} and the
arbitrariness of $\ytnp\in\qtnp$, we obtain that when $p\in
(1,\fz)$,
\begin{eqnarray*}
&&\lf\{\dsum^0_{k=-\fz}2^{ksq}\|I_2\|_\lp^q\r\}^{1/q}\\
&&\hs\ls\lf(\dsum^0_{k=-\fz}\lf\{\dsum^\fz_{k'=1}
\dsum_{\tau'\in\ikp}\dsum^\nktp_{\nu'=1}2^{-(k'-k)(\ezp+s)}2^{k'sp}
\mu(\qtnp)|\dkp(f)(\ytnp)|^p\r.\r.\\
&&\hs\hs\lf.\lf.\times
\dint_\cx\dfrac 1{V_{2^{-k}}(\ytnp)+V(x,\ytnp)}\dfrac {2^{-k\ezp}}
{[2^{-k}+d(x,\ytnp)]^\ezp}\,d\mu(x)\r\}^{q/p}\r)^{1/q}\\
&&\hs\ls\lf\{\dsum^0_{k=-\fz}\lf[\dsum^\fz_{k'=1}2^{-(k'-k)(\ezp+s)}2^{k'sp}
\|\dkp(f)\|_\lp^p\r]^{q/p}\r\}^{1/q}\\\\
&&\hs\ls\lf\{\dsum^\fz_{k'=1}2^{k'sq}\|\dkp(f)\|_\lp^q\r\}^{1/q}
\ls\|f\|_\b,
\end{eqnarray*}
where in the penultimate inequality, we used the arbitrariness of
$\ytnp\in\qtnp$, \eqref{4.3} when $q/p\le 1$, or the H\"older
inequality when $q/p>1$.

Thus, $Z_1\ls\|f\|_\b$ and therefore, $J_1\ls\|f\|_\b$, which
completes the proof of Theorem \ref{t1.2}(i).

To show (ii) of Theorem \ref{t1.2}, if $f\in\sh$ and \eqref{4.2} holds, by an argument
similar to (i), then it is easy to see that $f\in\f$ and
$ \|f\|_\f\ls\|f\|_\sh+J_2,$ where $J_2$ is as in Theorem \ref{t1.2}.

Conversely, if $f\in\f$, since
$$\f\subset B^s_{p,\,\max(p,\,q)}(\cx)$$
(see \cite[Proposition 5.31(iii)]{hmy2}),
we have that $f\in B^s_{p,\,\max(p,\,q)}(\cx)$ and hence,  by (i) of Theorem
\ref{t1.2}, $f\in\sh$ and $\|f\|_\sh\ls\|f\|_\f$.

To estimate $J_2$, we have
\begin{eqnarray*}
J_2&\ls&\lf\|\lf\{\dsum^0_{k=-\fz}2^{ksq}|D_k(f)|^q\r\}^{1/q}\r\|_\lp
+\lf\|\lf\{\dsum^\fz_{k=1}2^{ksq}|D_k(f)|^q\r\}^{1/q}\r\|_\lp\\
&\ls& Z_2+\|f\|_\f,
\end{eqnarray*}
where
$$
Z_2\equiv\lf\|\lf\{\dsum^0_{k=-\fz}2^{ksq}|D_k(f)|^q\r\}^{1/q}\r\|_\lp.$$
Assume that $f\in(\cg^\ez_0(\bz,\gz))'$ for some $\bz,\ \gz$ as in
\eqref{4.2}. Write $D_k(f)$ as in \eqref{4.4}. By \eqref{4.5} with
$n/(n+\ezp)<r\le 1$, $\ezp\in (\ez,1)$ and the
$L^{p/r}(\cx)$-boundedness of $M$ with $r<p$, we obtain
\begin{eqnarray*}
&&\lf\|\lf\{\dsum^0_{k=-\fz}2^{ksq}|I_1|^q\r\}^{1/q}\r\|_\lp\\
&&\hs\ls\lf\|\lf\{\dsum^0_{k=-\fz}2^{k[s+n(1-1/r)]q}\r\}^{1/q}
\lf[M\lf(\dsum_{\tau'\in I_0}\dsum^\nzp_{\nu'=1}
\lf[m_\qtop(|S_0(f)|)\r]^r\chi_\qtop\r)\r]^{1/r}\r\|_\lp\\
&&\hs\ls\lf(\dsum_{\tau'\in I_0}\dsum^\nzp_{\nu'=1}
\mu(\qtop)\lf[m_\qtop(|S_0(f)|)\r]^p\r)^{1/p}\ls\|f\|_\f,
\end{eqnarray*}
where in the penultimate inequality we used the assumption
$p>n/(n+s)$.

To estimate $I_2$, by \eqref{4.6} and Lemma 5.3 of \cite{hmy2} with
$n/(n+\ezp)<r\le 1$, for all $x\in\cx$, we have
\begin{eqnarray*}
|I_2|&\ls&\dsum^\fz_{k'=1}2^{-(k'-k)\ezp}\lf\{M\lf(\dsum_{\tau'\in\ikp}
\dsum^\nktp_{\nu'=1}|\dkp(f)(\ytnp)|^r\chi_\qtnp\r)(x)\r\}^{1/r}\\
&\ls&\dsum^\fz_{k'=1}2^{-(k'-k)\ezp}\lf\{M\lf(|\dkp(f)|^r\r)(x)\r\}^{1/r},
\end{eqnarray*}
where in the last inequality, we used the arbitrariness of
$\ytnp\in\qtnp$. From this, Lemma 3.14 of \cite{hmy2}, \eqref{4.3} when
$q\le 1$,  or the H\"older inequality when $q>1$, it follows that
\begin{eqnarray*}
&&\lf\|\lf\{\dsum^0_{k=-\fz}2^{ksq}|I_2|^q\r\}^{1/q}\r\|_\lp\\
&&\hs\ls\lf\|\lf\{\dsum^0_{k=-\fz}\lf(\dsum^\fz_{k'=1}
2^{-(k'-k)(\ezp+s)}2^{k's}\lf[M\lf(|\dkp(f)|^r\r)\r]^{1/r}\r)^q\r\}^{1/q}\r\|_\lp\\
&&\hs\ls\lf\|\lf\{\dsum^\fz_{k'=1}2^{k'sq}
\lf[M\lf(|\dkp(f)|^r\r)\r]^{q/r}\r\}^{1/q}\r\|_\lp\\
&&\hs\ls\lf\|\lf\{\dsum^\fz_{k'=1}2^{k'sq}
|\dkp(f)|^q\r\}^{1/q}\r\|_\lp\ls\|f\|_\f,
\end{eqnarray*}
where we chose $r<\min\{p,\,q\}$.

Thus, $Z_2\ls\|f\|_\f$ and therefore, $J_2\ls\|f\|_\f$, which
completes the proof of Theorem \ref{t1.2}.
\end{proof}

\section{Local integrability of $\b$, $\f$, $\db$ and $\df$}
\label{s5}

\hskip\parindent We first recall the following (locally) $\lp$-integrability of
elements in Besov spaces and Triebel-Lizorkin spaces, which were
essentially given in the proof of Proposition 4.2 in \cite{my}.
Here we sketch it for the convenience of readers.

\begin{prop}\label{p5.1}
Let $s\in (0,1)$ and $p\in (n/(n+1),\fz]$. Then,

(i) $\db\subset L^p_\loc(\cx)$ for $q\in (0,\fz]$ and $\df\subset
L^p_\loc(\cx)$ for $q\in (n/(n+1),\fz]$;

(ii) $\b\subset\lp$ for $q\in (0,\fz]$ and $\f\subset\lp$ for $q\in
(n/(n+1),\fz]$.
\end{prop}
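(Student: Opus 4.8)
The plan is to realize each such $f$ as a concrete (locally) $p$-integrable function by means of the discrete Calder\'on reproducing formulae of \cite{hmy2}, and to estimate the resulting series by the defining norms; for the most part this just reuses the kernel estimates already carried out in the proof of Theorem \ref{t1.2} (this is also the scheme of the proof of Proposition 4.2 in \cite{my}). By Corollary \ref{c4.1} we may compute all the relevant norms with respect to an approximation of the identity of order $1$ with bounded support. Fix $\ez\in(0,1)$ with $\ez>s$ large enough that the space in question is defined, then $\ez'\in(\ez,1)$, and $r$ with $n/(n+\ez')<r<1\wg p$ (resp. $n/(n+\ez')<r<1\wg p\wg q$); such choices exist by the hypotheses on $p$ (resp. on $p$ and $q$) and guarantee $p/r>1$ (resp. $p/r,\,q/r>1$).

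\emph{Inhomogeneous case.} Let $\{S_k\}_{k\in\zz_+}$ be an inhomogeneous approximation of the identity of order $1$ with bounded support, $D_0\equiv S_0$ and $D_k\equiv S_k-S_{k-1}$. Applying Lemma \ref{l4.2} yields $f=T_0+\sum_{k\ge1}T_k$ with
\begin{gather*}
T_k(x)\equiv\dsum_{\tau\in\ik}\dsum^\nkt_{\nu=1}\mu(\qtn)\wdk(x,\ytn)D_k(f)(\ytn)\quad(k\ge1),\\
T_0(x)\equiv\dsum_{\tau\in I_0}\dsum^\nz_{\nu=1}\lf[\dint_{\qto}\wdo(x,y)\,d\mu(y)\r]m_{\qto}(D_0(f)).
\end{gather*}
Choosing each $\ytn$ so that $|D_k(f)(\ytn)|=\inf_{\qtn}|D_k(f)|$, the size and regularity of $\wdk$ together with Lemmas 5.2 and 5.3 of \cite{hmy2} — precisely the estimates yielding the bounds for $I_1$ and $I_2$ in the proof of Theorem \ref{t1.2} — give, for $k\ge1$ and all $x\in\cx$, $|T_k(x)|\ls[M(|D_k(f)|^r)(x)]^{1/r}$, together with $|T_0(x)|\ls[M(\sum_{\tau,\nu}[m_{\qto}(|D_0(f)|)]^r\chi_{\qto})(x)]^{1/r}$. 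Since cubes at a fixed level are pairwise disjoint and $M$ is bounded on $L^{p/r}(\cx)$, this gives $\|T_k\|_\lp\ls\|D_k(f)\|_\lp$ and $\|T_0\|_\lp\ls\{\sum_{\tau,\nu}\mu(\qto)[m_{\qto}(|D_0(f)|)]^p\}^{1/p}$, the latter being part of $\|f\|_\b$ (resp. $\|f\|_\f$). For $\b$: as $s>0$, $\|D_k(f)\|_\lp\le2^{-ks}\|f\|_\b$ for $k\ge1$, so $\sum_{k\ge1}\|T_k\|_\lp^{1\wg p}<\fz$, the series converges in $\lp$, and $f\in\lp$ with $\|f\|_\lp\ls\|f\|_\b$. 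For $\f$: summing in $k$ first, $\sum_{k\ge1}|T_k(x)|\ls\{\sum_{k\ge1}2^{ksq}[M(|D_k(f)|^r)(x)]^{q/r}\}^{1/q}$ by $s>0$ and \eqref{4.3} (or H\"older when $q>1$), and then the Fefferman--Stein vector-valued maximal inequality (Lemma 3.14 of \cite{hmy2}) with exponents $p/r,\,q/r>1$ gives $\|f\|_\lp\ls\|f\|_\f$. This proves (ii).

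\emph{Homogeneous case.} Let $\{S_k\}_{k\in\zz}$ be an approximation of the identity of order $1$ with bounded support and use the homogeneous discrete Calder\'on reproducing formula of \cite{hmy2} (the homogeneous counterpart of Lemma \ref{l4.2}, whose synthesis kernels $\wdk$, $k\in\zz$, satisfy (i) and (ii) of Lemma \ref{l3.1} and the moment condition (iii) of Lemma \ref{l3.1}); thus $f=\sum_{k\in\zz}T_k$ in $(\ocg^\ez_0(\bz,\gz))'$, with $T_k$ as above. For $k\ge0$ the bound $|T_k(x)|\ls[M(|D_k(f)|^r)(x)]^{1/r}$ is unchanged, and since $\|D_k(f)\|_\lp\le2^{-ks}\|f\|_\db$ (resp. $\le2^{-ks}\|f\|_\df$), the partial sum $\sum_{k\ge0}T_k$ converges in $L^p(\cx)$ with $L^p$-norm $\ls\|f\|_\db$ (resp. $\ls\|f\|_\df$), exactly as in the inhomogeneous case. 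The essential point is the low-frequency tail $\sum_{k<0}T_k$: here $\|D_k(f)\|_\lp$ grows as $k\to-\fz$, so the series cannot be summed absolutely and one must exploit the H\"older regularity of $\wdk$. Fix $B_0\equiv B(x_1,1)$ and a ball $B$. For $k<0$, $x\in B$ and $z\in B_0$, $d(x,z)$ is bounded while $2^{-k}$ is large, so property (ii) of Lemma \ref{l3.1} gives $|\wdk(x,y)-\wdk(z,y)|\ls 2^{k\ez'}\frac1{V_{2^{-k}}(x)+V(x,y)}\frac{2^{-k\ez'}}{[2^{-k}+d(x,y)]^{\ez'}}$ for all $y\in\cx$, with the implicit constant depending on $B$; hence, by Lemma 5.3 of \cite{hmy2} and the choice of the $\ytn$,
\begin{equation*}
\lf|T_k(x)-m_{B_0}(T_k)\r|\le m_{B_0}\lf(\lf|T_k(x)-T_k(\cdot)\r|\r)\ls 2^{k\ez'}\lf[M\lf(|D_k(f)|^r\r)(x)\r]^{1/r},
\end{equation*}
so that $\|T_k-m_{B_0}(T_k)\|_{L^p(B)}\ls 2^{k\ez'}\|D_k(f)\|_\lp\le 2^{k(\ez'-s)}\|f\|_\db$ (resp. $\le 2^{k(\ez'-s)}\|f\|_\df$). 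Since $\ez'>s$, $\sum_{k<0}\|T_k-m_{B_0}(T_k)\|_{L^p(B)}^{1\wg p}<\fz$, so $\sum_{k<0}[T_k-m_{B_0}(T_k)]$ converges in $L^p_\loc(\cx)$, and
\begin{equation*}
g\equiv\dsum_{k\ge0}T_k+\dsum_{k<0}\lf[T_k-m_{B_0}(T_k)\r]\in L^p_\loc(\cx).
\end{equation*}
As the constants $m_{B_0}(T_k)$ annihilate $\ocg^\ez_0(\bz,\gz)$ (whose elements integrate to zero), $g$ and $f$ define the same element of $(\ocg^\ez_0(\bz,\gz))'$; hence $f=g\in L^p_\loc(\cx)$, which proves (i).

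The step I expect to be the main obstacle is exactly this low-frequency tail of the homogeneous reproducing formula: a term-by-term estimate is impossible because $\|D_k(f)\|_\lp$ blows up as $k\to-\fz$, so one must extract the geometric factor $2^{k(\ez'-s)}$ from the H\"older regularity of the synthesis kernels (this is where $\ez'>s$, available since $s<1$, enters) and then pass from the abstract distribution $f$ to an honest locally $p$-integrable representative by subtracting the constants $m_{B_0}(T_k)$. The inhomogeneous case has no such tail and reduces to the kernel estimates already established for Theorem \ref{t1.2}, together with the summability $\sum_{k\ge1}2^{-ks(1\wg p)}<\fz$ afforded by $s>0$.
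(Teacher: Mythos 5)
Your proof is correct, but it takes a genuinely different route from the paper's. The paper proves Proposition \ref{p5.1} almost entirely by citation: the homogeneous Besov inclusion is quoted from (4.4) in \cite{my}, the inhomogeneous Besov inclusion from the proof of Proposition 4.2 in \cite{my}, and both Triebel--Lizorkin cases are then deduced via the embeddings $\df\subset\dot B^s_{p,\max(p,q)}(\cx)$ and $\f\subset B^s_{p,\max(p,q)}(\cx)$ from \cite{hmy2}. You instead give a self-contained argument from the discrete Calder\'on reproducing formulae (Lemma \ref{l4.2} and its homogeneous counterpart, Lemma \ref{l5.2}), with the maximal-function domination from Lemma 5.3 of \cite{hmy2} doing the per-level work and, for the Triebel--Lizorkin case, the Fefferman--Stein inequality (Lemma 3.14 of \cite{hmy2}) replacing the Besov embedding. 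Your treatment of the homogeneous low-frequency tail --- exploiting the H\"older regularity of the synthesis kernels $\wdk$ to extract the factor $2^{k\ez'}$, then subtracting the constants $m_{B_0}(T_k)$, which annihilate $\ocg^\ez_0(\bz,\gz)$ --- is exactly the part that the paper's citation hides, and you correctly identify it as the crux. Two small points worth noting: first, for $p<1$ the final identification $f=g$ requires a word about in what sense an $L^p_\loc$ function represents an element of $(\ocg^\ez_0(\bz,\gz))'$, since such functions need not define distributions in the usual way (the natural statement is that the modified partial sums converge in $L^p_\loc$ to a function which is the canonical representative of $f$); second, choosing $\ytn$ to minimize $|D_k(f)|$ over the open cube implicitly uses continuity of $D_k(f)$ to pass to an attained infimum, which is fine but worth saying. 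Overall your proof is more informative than the paper's, though longer; the paper's is shorter precisely because it offloads the work to \cite{my} and to the embedding lemmas.
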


\begin{proof} It was proved in \cite[(4.4)]{my}
that $\db\subset L^p_\loc(\cx)$ for $s,\,p,\,q$ as in the proposition,
which together with $\df\subset \dot B^s_{p,\,\max(p,q)}(\cx)$ for
$s\in (0,1)$ and $p,\,q\in (n/(n+1),\fz]$ (see \cite[Proposition 5.10(ii)]{hmy2}
and \cite[Proposition 6.9(ii)]{hmy2}) further implies
that $\df\subset L^p_\loc(\cx)$.

To show (ii), by $\f\subset B^s_{p,\,\max(p,\,q)}(\cx)$ (see
\cite[Proposition 5.31(iii)]{hmy2}), it suffices to establish the
conclusion for Besov spaces. Moreover, this was given in the proof
of Proposition 4.2 in \cite{my}, 
which
completes the proof of Proposition \ref{p5.1}.
\end{proof}

As a corollary of Proposition \ref{p5.1} and the H\"older
inequality, we have the following obvious conclusions.

\begin{cor}\label{c5.1}
Let $s\in (0,1)$ and $p\in [1,\fz]$. Then,

(i) $\db\subset L^1_\loc(\cx)$ for $q\in (0,\fz]$ and $\df\subset
L^1_\loc(\cx)$ for $q\in (n/(n+1),\fz]$;

(ii) $\b\subset L^1_\loc(\cx)$ for $q\in (0,\fz]$ and $\f\subset
L^1_\loc(\cx)$ for $q\in (n/(n+1),\fz]$.
\end{cor}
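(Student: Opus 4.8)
The plan is to deduce Corollary \ref{c5.1} directly from Proposition \ref{p5.1} together with the H\"older inequality on balls, exploiting that on an RD-space balls have finite measure. The point is purely about integrability \emph{locally}, so we may fix an arbitrary ball $B=B(x_0,R)\subset\cx$ and show that the relevant quantity lies in $L^1(B)$.

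First I would treat the case $p\in[1,\fz)$. Here, by Proposition \ref{p5.1}(i), every $f\in\db$ (with $q\in(0,\fz]$) belongs to $L^p_\loc(\cx)$, hence $f\in L^p(B)$. Since $\mu(B)<\fz$, H\"older's inequality with exponents $p$ and $p'$ gives
\begin{equation*}
\int_B|f(x)|\,d\mu(x)\le \lf(\int_B|f(x)|^p\,d\mu(x)\r)^{1/p}\mu(B)^{1/p'}<\fz,
\end{equation*}
so $f\in L^1(B)$; as $B$ was arbitrary, $f\in L^1_\loc(\cx)$. The same argument applies verbatim to $\df$ for $q\in(n/(n+1),\fz]$ using Proposition \ref{p5.1}(i), and to $\b$ and $\f$ using Proposition \ref{p5.1}(ii) (replacing $L^p_\loc(\cx)$ by $\lp$, which is even stronger). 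This handles part (i) and part (ii) for finite $p$.

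It remains to deal with $p=\fz$. In this case $L^\fz_\loc(\cx)\subset L^1_\loc(\cx)$ again because every ball has finite measure: if $f\in L^\fz(B)$ then $\int_B|f|\,d\mu\le\|f\|_{L^\fz(B)}\mu(B)<\fz$. So for $p=\fz$ the conclusions follow immediately from Proposition \ref{p5.1} without invoking H\"older at all. Combining the two cases completes the proof of Corollary \ref{c5.1}.

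I do not anticipate a genuine obstacle here; the corollary is a routine consequence of Proposition \ref{p5.1}. The only point requiring a word of care is the finiteness of $\mu(B(x_0,R))$, which is part of the standing assumption on $(\cx,d,\mu)$ made at the start of Section \ref{s1} (all balls have finite and positive measure); this is what makes the embedding $L^p_\loc(\cx)\hookrightarrow L^1_\loc(\cx)$ valid for every $p\in[1,\fz]$. One could present the whole thing in a single line: ``This follows from Proposition \ref{p5.1}, the H\"older inequality, and the fact that all balls in $\cx$ have finite measure.''
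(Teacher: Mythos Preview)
Your proof is correct and matches the paper's own argument: the authors state the corollary as an immediate consequence of Proposition \ref{p5.1} and the H\"older inequality, relying on the standing assumption that balls have finite measure. There is nothing to add.
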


Comparing Corollary \ref{c5.1} with the corresponding conclusions of
Besov and Triebel-Lizorkin spaces on $\rn$ in \cite[Theorem
3.3.2]{st}, the corresponding conclusions for $\cx$ when $p\in
(n/(n+s), 1)$ are missed. To obtain these cases, the method in
\cite{st} strongly depends on the embedding theorems for different
metrics on $\rn$ in \cite[p.\,129]{t83}. However, such embedding
conclusions are not available for $\cx$ due to the fact that for an
RD-space $\cx$, its ``local" dimension may strictly less than its
global dimension such as classes of nilpotent groups; see also
\cite{hmy2}. But, using the inhomogeneous discrete Calder\'on
reproducing formula, Lemma \ref{l4.2}, and some basic properties of
Besov and Triebel-Lizorkin spaces, we can improve Corollary
\ref{c5.1} into the following proposition, which, according to
\cite[Theorem 3.3.2]{st}, is sharp even for Euclidean spaces.

In what follows, for $|s|<1$, let
$$p(s)\equiv\max\{n/(n+1),\ n/(n+1+s)\}.$$

The properties of
Besov and Triebel-Lizorkin spaces on RD-spaces in
the following Lemma \ref{l5.1} can be found
in \cite{hmy2}.

\begin{lem}[\cite{hmy2}]\label{l5.1} Let $|s|<1$.

(i) For $p(s)<p\le\fz$,
$B^s_{p,\, q_0}(\cx)\subset B^s_{p,\, q_1}(\cx)$
when $0<q_0\le q_1\le\fz$,  and
$F^s_{p,\, q_0}(\cx)\subset F^s_{p,\, q_1}(\cx)$
when $p(s)<q_0\le q_1\le\fz$.

(ii) Let $-1<s+\tz<1$ and $\tz>0$. Then for $p(s)<p\le\fz$,
$B^{s+\tz}_{p,\, q_0}(\cx)\subset B^s_{p,\, q_1}(\cx)$
when $0<q_0,\ q_1\le\fz$, and
$F^{s+\tz}_{p,\, q_0}(\cx)\subset F^s_{p,\, q_1}(\cx)$
when $p(s)<q_0,\ q_1\le\fz$.

(iii) If $p(s)<p,\ q\le\fz$, then
$B^s_{p,\,\min(p,\,q)}(\cx)\subset\f\subset B^s_{p,\,\max(p,\,q)}(\cx).$

(iv) $F^0_{p,\,2}(\cx)=\lp$ for $p\in (1,\fz)$, $F^0_{1,\,2}(\cx)=h^1(\cx)$ and
$F^0_{\fz,\,2}(\cx)=\sbmo(\cx)$ with equivalent norms.
\end{lem}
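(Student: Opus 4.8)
My plan is to treat parts (i)--(iii) as soft (quasi-)Banach-space manipulations and to isolate part (iv) as the only genuinely analytic input --- which is why the whole lemma is quoted from \cite{hmy2} rather than reproved here. Throughout I will use that the (quasi-)norms of $\b$, $\f$, $\db$, $\df$ do not depend on the chosen admissible (inhomogeneous) approximation of the identity $\{D_k\}$ nor on the ambient distribution space (this is proved in \cite{hmy2}; see also Corollary \ref{c4.1}), so I may fix one family $\{D_k\}$ once and for all. I also record at the outset that, in the inhomogeneous case, the low-frequency term
$$\lf\{\dsum_{\tau\in I_0}\dsum^\nz_{\nu=1}\mu(\qto)\lf[m_\qto(|D_0(f)|)\r]^p\r\}^{1/p}$$
occurs identically in the $\b$- and $\f$-norms, hence never affects a comparison between the two scales and may be ignored; only the high-frequency part $\{2^{ks}D_k(f)\}_{k\ge1}$ matters below.

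For (i) and (ii) the key is elementary inclusions between sequence spaces. For (i) I would get $B^s_{p,\,q_0}(\cx)\subset B^s_{p,\,q_1}(\cx)$ with $q_0\le q_1$ directly from $\ell^{q_0}\hookrightarrow\ell^{q_1}$ applied to $\{2^{ks}\|D_k(f)\|_\lp\}_k$, and the Triebel-Lizorkin analogue from the same inclusion applied pointwise in $x\in\cx$ to $\{2^{ks}D_k(f)(x)\}_k$ followed by taking $\lp$-(quasi-)norms; the hypothesis $p(s)<q_0$ serves only to guarantee that $F^s_{p,\,q_0}(\cx)$ is defined. For (ii), since $\tz>0$ and the frequencies $k$ run over $\zz_+$, the sequence $\{2^{-k\tz}\}_{k\in\zz_+}$ lies in every $\ell^r$ with $r>0$, so splitting $2^{ksq_1}=2^{k(s+\tz)q_1}\,2^{-k\tz q_1}$ and using H\"older (or just the trivial $\ell^\fz$ bound) gives
$$\lf\{\dsum^\fz_{k=1}2^{ksq_1}\|D_k(f)\|_\lp^{q_1}\r\}^{1/q_1}\ls
\lf\{\dsum^\fz_{k=1}2^{k(s+\tz)q_0}\|D_k(f)\|_\lp^{q_0}\r\}^{1/q_0};$$
the same estimate with the $\ell^q$-norm taken inside the $\lp$-norm handles the Triebel-Lizorkin case.

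For (iii) the plan is to combine the generalized Minkowski inequality for mixed (quasi-)norms with the monotonicity $\ell^a\hookrightarrow\ell^b$ for $a\le b$. Up to the common low-frequency term, $\|f\|_\b$ is an $\ell^q_k(\lp)$-type quantity and $\|f\|_\f$ the corresponding $\lp(\ell^q_k)$-type quantity, and for all admissible exponents one has $\ell^r_k(\lp)\hookrightarrow\lp(\ell^r_k)$ when $r\le p$ (raise to the $r$-th power and use the triangle inequality in $L^{p/r}$) and $\lp(\ell^r_k)\hookrightarrow\ell^r_k(\lp)$ when $r\ge p$ (raise to the $p$-th power and dualize $\ell^{r/p}$). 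If $q\le p$, the first of these with $r=q$ gives $B^s_{p,\,q}(\cx)\subset\f$, while the pointwise inclusion $\ell^q\hookrightarrow\ell^p$ inside the $\lp$-norm, together with $F^s_{p,\,p}(\cx)=B^s_{p,\,p}(\cx)$, gives $\f\subset B^s_{p,\,p}(\cx)$; since here $\min(p,q)=q$ and $\max(p,q)=p$, this is the claim. If $q\ge p$, the pointwise inclusion $\ell^p\hookrightarrow\ell^q$ inside the $\lp$-norm gives $B^s_{p,\,p}(\cx)\subset\f$, and the second Minkowski inequality with $r=q$ gives $\f\subset B^s_{p,\,q}(\cx)$; since now $\min(p,q)=p$ and $\max(p,q)=q$, this is again the claim.

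Finally, (iv) is the Littlewood-Paley / square-function identification of these spaces, and this is the step I expect to be the real obstacle. The case $p\in(1,\fz)$ rests on the $\lp$-boundedness of the vector-valued operator $f\mapsto(\sum_k|D_k(f)|^2)^{1/2}$ and on its inversion through the Calder\'on reproducing formula --- i.e.\ vector-valued Calder\'on-Zygmund theory on the RD-space $\cx$; the case $p=1$ on the grand-maximal/atomic description of Goldberg's local Hardy space $h^1(\cx)$ \cite{g}; and the case $p=\fz$ on the Carleson-measure characterization of $\sbmo(\cx)$. All three are carried out in \cite{hmy2}, and I would simply invoke them; everything preceding (iv) is soft, so no further ingredient is needed.
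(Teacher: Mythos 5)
The paper gives no proof of Lemma \ref{l5.1}: it is quoted verbatim from \cite{hmy2}, and the single sentence preceding the statement merely points the reader there. Your arguments for (i)--(iii) are the standard sequence-space manipulations and are carried out correctly in the quasi-Banach range: the $\ell^{q_0}\hookrightarrow\ell^{q_1}$ monotonicity for (i), the weight $2^{-k\tz}\in\ell^r(\nn)$ for every $r>0$ for (ii) (valid here precisely because the inhomogeneous high-frequency index runs over $k\ge 1$, so no issue near $k=-\infty$), and the two one-sided Minkowski inequalities for mixed (quasi-)norms together with $F^s_{p,p}(\cx)=B^s_{p,p}(\cx)$ for (iii), with the power tricks ($|\cdot|^{q}$ then triangle inequality in $L^{p/q}$, and its dual) handling the $p,q<1$ cases where the classical Minkowski integral inequality does not apply directly. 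Your identification of (iv) as the only substantive analytic input, deferred to \cite{hmy2}, matches the paper's treatment exactly. Since the paper does not reprove the lemma there is no competing argument to compare against; your proposal is correct and, if anything, more explicit than what the paper offers.
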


With the aid of Lemma \ref{l5.1}, we further have the following conclusions.

\begin{prop}\label{p5.2} Let $s\in [0,1)$. Then

(i) $\b\subset L^1_\loc(\cx)$ if either $p\in (n/(n+1),\fz]$, $s\in
(n(1/p-1)_+, 1)$, $q\in (0,\fz]$ or $p\in (n/(n+1),1]$,
$s=n(1/p-1)$, $q\in (0,1]$ or $p\in (1,\fz]$, $s=0$, $q\in (0,
\min(p,2)]$;

(ii) $\f\subset L^1_\loc(\cx)$ if either $p\in (n/(n+1),1)$,
$s=n(1/p-1)$, $q\in (n/(n+1),1]$ or $p\in (n/(n+1),1)$, $s\in
(n(1/p-1), 1)$, $q\in (n/(n+1),\fz]$ or $p\in [1,\fz]$, $s\in
(0,1)$, $q\in (n/(n+1),\fz]$ or $p\in [1,\fz]$, $s=0$, $q\in
(n/(n+1), 2]$.
\end{prop}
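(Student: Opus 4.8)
The plan is to reduce everything to the Besov statements in (i) via the embeddings of Lemma \ref{l5.1}; to dispose of the cases $p\ge1$ and $s=0$ using Corollary \ref{c5.1} and the identifications of $F^0_{p,\,2}(\cx)$; and to treat the genuinely new range $p\in(n/(n+1),1)$ by expanding $f$ through the discrete Calder\'on reproducing formula of Lemma \ref{l4.2} and exploiting the scale-$2^{-k}$ structure of the reproducing kernels.

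In detail, by Lemma \ref{l5.1}(iii) one has $\f\subset B^s_{p,\,\max(p,\,q)}(\cx)$, and in the first three ranges of (ii) the pair $\bigl(p,\max(p,q)\bigr)$ falls into one of the three ranges of (i), so those cases follow from (i). In the fourth range of (ii) ($p\in[1,\fz]$, $s=0$, $q\in(n/(n+1),2]$), and likewise in the third range of (i) ($p\in(1,\fz]$, $s=0$, $q\in(0,\min(p,2)]$), Lemma \ref{l5.1}(i) and (iii) give $\f\subset F^0_{p,\,2}(\cx)$, respectively $\b\subset F^0_{p,\,2}(\cx)$, and by Lemma \ref{l5.1}(iv) the space $F^0_{p,\,2}(\cx)$ equals $\lp$ (if $1<p<\fz$), $h^1(\cx)$ (if $p=1$) or $\sbmo(\cx)$ (if $p=\fz$), each contained in $L^1_\loc(\cx)$. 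It thus remains to prove $\b\subset L^1_\loc(\cx)$ in the ranges (a) $p\in(n/(n+1),\fz]$, $s\in(n(1/p-1)_+,1)$, $q\in(0,\fz]$ and (b) $p\in(n/(n+1),1]$, $s=n(1/p-1)$, $q\in(0,1]$. For $p\ge1$ and $s\in(0,1)$ this is Corollary \ref{c5.1}(ii), while for $p=1$ in case (b) we have $s=0$ and $B^0_{1,\,1}(\cx)=F^0_{1,\,1}(\cx)\subset F^0_{1,\,2}(\cx)=h^1(\cx)\subset L^1(\cx)$ by Lemma \ref{l5.1}(i),(iii),(iv). So the only new case is $p\in(n/(n+1),1)$, and by the monotonicity of Besov spaces in the third index (Lemma \ref{l5.1}(i)) we may assume $q=\fz$ in case (a) and $q=1$ in case (b).

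Fix such an $f\in\b$ and, by Corollary \ref{c4.1}, compute its norm via $D_0\equiv S_0$ and $D_k\equiv S_k-S_{k-1}$ for an inhomogeneous approximation of the identity $\{S_k\}_{k\in\zz_+}$ of order $1$ with bounded support. By Lemma \ref{l4.2}, $f=g_0+\sum_{k=1}^\fz g_k$ in $(\cg^\ez_0(\bz,\gz))'$, where $g_0$ is built from the cube averages of $D_0(f)$ and, for $k\ge1$,
\[
g_k(x)=\sum_{\tau\in I_k}\sum_{\nu=1}^{N(k,\,\tau)}\mu(\qtn)\,\wdk(x,\ytn)\,D_k(f)(\ytn).
\]
The two estimates at the heart of the argument are
\[
\|g_0\|_{L^1(\cx)}\ls\|f\|_\b\qquad\text{and}\qquad \|g_k\|_{L^1(\cx)}\ls 2^{kn(1/p-1)}\|D_k(f)\|_\lp\ \ (k\ge1).
\]
Since $\mu(\qtn)\sim V_{2^{-k}}(\ytn)$ for the level-$k$ cubes, the size bounds of Lemma \ref{l3.1}(i) for $\wdk$ together with the summation lemmas of \cite{hmy2} first give the $\lp$-estimates $\|g_k\|_\lp\ls\|D_k(f)\|_\lp$ and $\|g_0\|_\lp\ls\bigl\{\sum_{\tau,\,\nu}\mu(\qto)[m_{\qto}(|D_0(f)|)]^p\bigr\}^{1/p}\le\|f\|_\b$. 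As $g_k$ and $g_0$ lie in the ranges of the scale-$2^{-k}$ and scale-$1$ operators $\wdk$ and $\wdo$, they are band-limited at the corresponding scales, so a Nikol'skii-type inequality upgrades these $\lp$-estimates to the stated $L^1$-estimates (the extra factor being $1$ when $k=0$).

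Granting this, $\sum_{k=1}^\fz\|g_k\|_{L^1(\cx)}\ls\sum_{k=1}^\fz 2^{-k[s-n(1/p-1)]}\bigl(2^{ks}\|D_k(f)\|_\lp\bigr)$, and the right-hand side is $\ls\|f\|_\b$: in case (a), $s-n(1/p-1)>0$ and $\sup_k 2^{ks}\|D_k(f)\|_\lp\le\|f\|_\b$ since $q=\fz$, so the sum is dominated by $\|f\|_\b\sum_{k\ge1}2^{-k[s-n(1/p-1)]}<\fz$; in case (b), $s-n(1/p-1)=0$, so the sum equals $\sum_{k\ge1}2^{ks}\|D_k(f)\|_\lp\le\|f\|_\b$ because $q=1$. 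Hence $f=g_0+\sum_{k\ge1}g_k$ converges absolutely in $L^1(\cx)$; as elements with bounded support are dense in $\cg^\ez_0(\bz,\gz)$, its $L^1$-limit coincides with the distributional limit $f$, and therefore $f\in L^1(\cx)\subset L^1_\loc(\cx)$. The main obstacle is precisely this last passage when $p<1$: $\lp$-membership (or a ball-wise H\"older estimate) no longer yields $L^1_\loc$-membership, so one is forced to exploit the band-limited structure of the pieces $g_k$, and the factor $2^{kn(1/p-1)}$ this produces is exactly what makes the threshold $s>n(1/p-1)$---and, at the endpoint $s=n(1/p-1)$, the restriction $q\le1$---necessary for the summation, and, by \cite{st}, sharp even on $\rn$. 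The remaining ingredients (the reductions via Lemma \ref{l5.1} and the appeal to Corollary \ref{c5.1}) are routine.
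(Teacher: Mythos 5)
Your reduction of (ii) to (i) via Lemma \ref{l5.1}(iii), the disposal of the $s=0$ and $p\ge1$ cases via $F^0_{p,\,2}(\cx)$ and Corollary \ref{c5.1}, and the reduction to $q=\fz$ (resp.\ $q=1$) in case (a) (resp.\ (b)) are all sound and match the paper's strategy. The decisive case $p\in(n/(n+1),1)$ is also attacked, as in the paper, with the discrete reproducing formula of Lemma \ref{l4.2}. The gap is in the two displayed estimates you call ``the heart of the argument,'' specifically the Nikol'skii-type inequality $\|g_k\|_{L^1(\cx)}\ls 2^{kn(1/p-1)}\|D_k(f)\|_\lp$.

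Unpacking that inequality: since $\|\wdk(\cdot,y)\|_{L^1(\cx)}\ls1$, bounding $\|g_k\|_{L^1}$ leads to $\sum_{\tau,\nu}\mu(\qtn)|D_k(f)(\ytn)|$, and after the $\ell^p\hookrightarrow\ell^1$ embedding (valid since $p<1$) one is left needing $[\mu(\qtn)]^{p-1}\ls 2^{kn(p-1)}$, i.e.\ $\mu(\qtn)\gs 2^{-kn}$ \emph{uniformly} in $\tau,\nu$. That uniform lower volume bound holds on $\rn$ or on Ahlfors $n$-regular spaces, but it fails on a general RD-space: doubling only gives $V_{2^{-k}}(y)\gs 2^{-kn}V_1(y)$, and $V_1(y)$ may tend to $0$ as $y$ moves off to infinity (for example $\rr$ with $d\mu=e^{-|x|}\,dx$). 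Concentrating $D_k(f)$ in a region where $V_{2^{-k}}$ is tiny makes the ratio $\|g_k\|_{L^1}/\|D_k(f)\|_{\lp}$ arbitrarily large, so the claimed inequality---and hence the claimed global conclusion $f\in L^1(\cx)$---is false. This is in fact exactly the obstruction the paper flags: the Sickel--Triebel argument depends on ``embedding theorems for different metrics,'' i.e.\ precisely on Nikol'skii/Sobolev-type embeddings, which are unavailable because the local dimension $\kappa$ of an RD-space can be strictly smaller than its global dimension $n$.

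The fix, which the paper implements, is to settle for the weaker local statement directly. Fix a ball $B(x_1,2^{-l_0})$, split the sum over dyadic cubes into annuli $E^{k,\nu}_{\tau,l}$ according to $d(x_1,\ytn)\sim 2^{l-l_0}$, and use doubling to get the \emph{local} lower bound $\mu(\qtn)\gs 2^{-(k+l-l_0)n}\mu(B(x_1,2^{-l_0}))$. The resulting loss of $2^{ln(1/p-1)}$ in $l$ is then beaten by the kernel's decay and reverse doubling, which produce a factor $2^{-l(\ez'+\kappa)}$, making $\sum_l$ finite; the remaining $k$-sum carries exactly the factor $2^{k[n(1/p-1)-s]}$ you identified, so the endpoint analysis you describe then goes through. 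So your high-level picture (and the sharpness discussion) is right, but the passage from $\lp$- to $L^1$-control of $g_k$ cannot be a space-uniform Nikol'skii bound; it has to be a local estimate on a fixed ball exploiting doubling relative to that ball.
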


\begin{rem}\label{r5.1}  Comparing Proposition \ref{p5.2}(ii)
with the sharp result on $\rn$ in \cite[Theorem 3.3.2(i)]{st}, the
conclusion that $\f\subset L^1_\loc(\cx)$ when $p\in (n/(n+1),\,1]$,
$s=n(1/p-1)$ and $q\in (1,\fz]$ is still unknown. But it is easy to
show that this is true if $\cx$ is an Ahlfors $n$-regular metric
measure space, by using the embedding theorem in \cite{y031}.
\end{rem}

\begin{proof}[Proof of Proposition \ref{p5.2}]
To show (i), we consider the following several cases.
{\it Case (i)$_1$} $p\in[1,\fz]$, $s\in (0, 1)$ and $q\in (0,\fz]$.
In this case, by  (ii) and (iii) of Lemma \ref{l5.1}, we have $\b\subset
B^0_{p,\,\min(p,2)}(\cx)\subset F^0_{p,\,2}(\cx)$, which together
with Lemma \ref{l5.1}(iv) and the known facts that $\lp$ for
$p\in(1,\fz)$, $h^1(\cx)$ and $\sbmo(\cx)$ are all subspaces of
$L^1_\loc(\cx)$ further implies that in this case, $\b\subset
L^1_\loc(\cx)$. (This case is also included in Corollary \ref{c5.1}(ii).)

{\it Case (i)$_2$} $p\in (n/(n+1), 1)$, $s>n(1/p-1)>0$ and $q\in
(0,\fz]$ or $p\in (n/(n+1), 1)$, $s=n(1/p-1)$ and $q\in (0,1]$. In
this case, we need to use Lemma \ref{l4.2}. Let all the notation be
as in there. By Lemma \ref{l4.2}, we know that for all $x\in\cx$
\begin{eqnarray*}
f(x)&=&\dsum_{\tau\in I_0}\dsum^\nz_{\nu=1}m_\qto(D_0(f))
\dint_\qto\wdo(x,y)\,d\mu(y)\\
&&+\dsum^\fz_{k=1}\dsum_{\tau\in\ik}\dsum^\nkt_{\nu=1}
\mu(\qtn)\wdk(x,\ytn)D_k(f)(\ytn)
\end{eqnarray*}
holds  in
$(\cg^\ez_0(\bz,\gz))'$ with $\ez\in (s,1)$ and $\bz,\ \gz$ as in
\eqref{4.2}.
To show the conclusions in this case, it suffices to prove that
for any $l_0<0$,
\begin{equation*}
    \int_{B(x_1, 2^{-l_0})}|f(x)|\,d\mu(x)<\fz.
\end{equation*}
To this end, set $E^{k,\nu}_{\tau, 0}
\equiv\{(\tau,\nu):\ \ytn\in B(x_1, 2^{-l_0+2})\}$ and for $l\in\nn$,
\begin{equation*}
E^{k,\nu}_{\tau, l}
\equiv\{(\tau,\nu):\ \ytn\in B(x_1, 2^{-l_0+2+l})\setminus
B(x_1, 2^{-l_0+1+l})\}.
\end{equation*}
Then by the size condition of $\wdk$, for all $x\in\cx$, we have
\begin{eqnarray}\label{5.1}
|f(x)|&\ls&\dsum_{\tau\in I_0}\dsum^\nz_{\nu=1}\mu(\qto)m_\qto(|D_0(f)|)
\frac 1{V_1(x)+V_1(\yto)+V(x,\yto)}\\
\noz&&\times\frac 1{[1+d(x,\yto)]^\ezp}\\
\noz&&+\dsum^\fz_{k=1}\dsum_{\tau\in\ik}\dsum^\nkt_{\nu=1}
\mu(\qtn)\lf|D_k(f)(\ytn)\r|\\
\noz&&\times\frac 1{V_{2^{-k}}(x)+V_{2^{-k}}(\ytn)+V(x,\ytn)}
\frac {2^{-k\ezp}}{[2^{-k}+d(x,\ytn)]^\ezp}\\
\noz&\sim&\dsum^\fz_{l=0}\dsum_{\tau\in I_0}\dsum^\nz_{\nu=1}
\chi_{E^{0,\nu}_{\tau,l}}(\tau,\nu)\mu(\qto)m_\qto(|D_0(f)|)\\
\noz&&\times\frac 1{V_1(x)+V_1(\yto)+V(x,\yto)}\frac 1{[1+d(x,\yto)]^\ezp}\\
\noz&&+\dsum^\fz_{k=1}\dsum^\fz_{l=0}\dsum_{\tau\in\ik}\dsum^\nkt_{\nu=1}
\chi_{E^{k,\nu}_{\tau,l}}(\tau,\nu)\mu(\qtn)\lf|D_k(f)(\ytn)\r|\\
\noz&&\times\frac 1{V_{2^{-k}}(x)+V_{2^{-k}}(\ytn)+V(x,\ytn)}
\frac {2^{-k\ezp}}{[2^{-k}+d(x,\ytn)]^\ezp},
\end{eqnarray}
where $\ez'\in (\ez,1)$.
For any $(\tau,\nu)\in E^{k,\nu}_{\tau,0}$, it is easy to see that
\begin{eqnarray}\label{5.2}
&&\int_{B(x_1, 2^{-l_0})}\frac 1{V_{2^{-k}}(x)+V_{2^{-k}}(\ytn)+V(x,\ytn)}
\frac {2^{-k\ezp}}{[2^{-k}+d(x,\ytn)]^\ezp}\,d\mu(x)\\
\noz&&\hs\ls\int_\cx \frac 1{V_{2^{-k}}(\ytn)+V(x,\ytn)}
\frac {2^{-k\ezp}}{[2^{-k}+d(x,\ytn)]^\ezp}\,d\mu(x)\ls 1
\end{eqnarray}
and that
\begin{equation}\label{5.3}
    \mu(B(x_1, 2^{-l_0}))\le\mu(B(\ytn, 2^{-l_0+3}))\ls 2^{(k-l_0)n}\mu(\qtn).
\end{equation}
For any $(\tau,\nu)\in E^{k,\nu}_{\tau,l}$ with $l\in\nn$
and $x\in B(x_1, 2^{-l_0})$, we have
\begin{equation}\label{5.4}
    d(x,\ytn)\ge d(x_1,\ytn)-d(x,x_1)>d(x_1,\ytn)/2\ge 2^{l-l_0}
\end{equation}
and $B(x_1, 2^{-l_0})\subset B(\ytn, 2^{l+3-l_0})$, which both imply
that
\begin{equation}\label{5.5}
    V(x,\ytn)\sim V(\ytn, x)\gs V(\ytn, x_1)\gs\mu(B(x_1, 2^{l-l_0}))
    \gs 2^{l\kz}\mu(B(x_1, 2^{-l_0}))
\end{equation}
and
\begin{equation}\label{5.6}
    \mu(B(x_1, 2^{-l_0}))\ls 2^{(k+l-l_0)n}\mu(\qtn).
\end{equation}
From the estimates \eqref{5.4} and \eqref{5.5}, it further follows
that for any $(\tau,\nu)\in E^{k,\nu}_{\tau,l}$ with $l\in\nn$,
\begin{eqnarray}\label{5.7}
&&\int_{B(x_1, 2^{-l_0})}\frac 1{V_{2^{-k}}(x)+V_{2^{-k}}(\ytn)+V(x,\ytn)}
\frac {2^{-k\ezp}}{[2^{-k}+d(x,\ytn)]^\ezp}\,d\mu(x)\\
\noz&&\hs\ls\frac {2^{-k\ezp}}{2^{(l-l_0)\ezp}}\frac 1{2^{l\kz}}.
\end{eqnarray}
The estimates \eqref{5.2}, \eqref{5.3}, \eqref{5.6} and
\eqref{5.7} together with \eqref{5.1} and \eqref{4.3} yield that
\begin{eqnarray*}
&&\int_{B(x_1, 2^{-l_0})}|f(x)|\,d\mu(x)\\
&&\hs\ls\dsum^\fz_{l=0}\frac 1{2^{l(\ezp+\kz)}}\dsum_{\tau\in I_0}\dsum^\nz_{\nu=1}
\chi_{E^{0,\nu}_{\tau,l}}(\tau,\nu)\mu(\qto)m_\qto(|D_0(f)|)\\
&&\hs\hs+\dsum^\fz_{k=1}\dsum^\fz_{l=0}\frac 1{2^{l(\ezp+\kz)}}
\dsum_{\tau\in\ik}\dsum^\nkt_{\nu=1}
\chi_{E^{k,\nu}_{\tau,l}}(\tau,\nu)\mu(\qtn)\lf|D_k(f)(\ytn)\r|\\
&&\hs\ls\lf\{\dsum^\fz_{l=0}\frac {2^{ln(1/p-1)}}{2^{l(\ezp+\kz)}}\r\}
\lf[\lf\{\dsum_{\tau\in I_0}\dsum^\nz_{\nu=1}\mu(\qto)
\lf[m_\qto(|D_0(f)|)\r]^p\r\}^{1/p}\r.\\
&&\hs\hs\lf.+\dsum^\fz_{k=1}2^{kn(1/p-1)}
\lf\{\dsum_{\tau\in\ik}\dsum^\nkt_{\nu=1}
\mu(\qtn)\lf|D_k(f)(\ytn)\r|^p\r\}^{1/p}\r]\\
&&\hs\ls\|f\|_\b+\dsum^\fz_{k=1}2^{k[n(1/p-1)-s]}2^{ks}\|D_k(f)\|_\lp,
\end{eqnarray*}
where in the last inequality we used the fact that
$\ezp+\kz>n(1/p-1)$ and the arbitrariness of $\ytn\in\qtn$. Now if
$s>n(1/p-1)$, by the H\"older inequality when $q\in [1,\fz]$ or by
\eqref{4.3} when $q\in (0,1)$, and if $s=n(1/p-1)$ and $q\in
(0,1]$, by \eqref{4.3}, we obtain from the last inequality that
\begin{equation*}
    \int_{B(x_1, 2^{-l_0})}|f(x)|\,d\mu(x)\ls\|f\|_\b<\fz,
\end{equation*}
namely, $f\in L^1_\loc(\cx)$ in this case.

{\it Case (i)$_3$} $p=1$, $s=0$ and $q\in (0,1]$. In this case, by
(i) and (iv) of Lemma \ref{l5.1}, we have $B^0_{1,\,q}(\cx)\subset
B^0_{1,\,1}(\cx)\subset F^0_{1,\,2}(\cx) =L^1(\cx)\subset
L^1_\loc(\cx)$.

{\it Case (i)$_4$} $p\in (1,\fz]$, $s=0$ and $q\in (0, \min(p,2)]$.
In this case, (iii) and (iv) of Lemma \ref{l5.1} and the H\"older
inequality yield that $B^0_{p,\,q}(\cx)\subset F^0_{p,\,2}(\cx)
(=\lp$ when $p\in (1,\fz)$, $=\sbmo(\cx)$ when $p=\fz$) $\subset
L^1_\loc(\cx)$. All these cases complete the proof of (i).

To prove (ii), we also consider the following four cases. {\it Case
(ii)$_1$} $p\in (n/(n+1),1)$, $s=n(1/p-1)>0$ and $q\in
(n/(n+1),1]$. In this case, by (iii) and (i) of Lemma \ref{l5.1}, and the
corresponding conclusion on $\b$, we immediately obtain that
$F^{n(1/p-1)}_{p,\,q}(\cx) \subset
B^{n(1/p-1)}_{p,\,\max{(p,\,q)}}(\cx)\subset
B^{n(1/p-1)}_{p,\,1}(\cx)\subset L^1_\loc(\cx)$.

{\it Case (ii)$_2$} $p\in (n/(n+1),1)$, $s\in (n(1/p-1), 1)$ and
$q\in (n/(n+1),\fz]$. In this case, Lemma \ref{l5.1}(ii) and the
conclusion in Case (ii)$_1$ imply that $\f\subset
F^{n(1/p-1)}_{p,\,1}(\cx)\subset L^1_\loc(\cx)$.

{\it Case (ii)$_3$} $p\in [1,\fz]$, $s\in (0,1)$ and $q\in
(n/(n+1),\fz]$. Lemma \ref{l5.1}(i) yields that $\f\subset
F^0_{p,\,2}(\cx)$, which together with Lemma \ref{l5.1}(iv) and the
known facts that $\lp$ for $p\in(1,\fz)$, $h^1(\cx)$ and
$\sbmo(\cx)$ are all subspaces of $L^1_\loc(\cx)$ further implies
that in this case, $\f\subset L^1_\loc(\cx)$.

{\it Case (ii)$_4$} $p\in [1,\fz]$, $s=0$ and $q\in (n/(n+1), 2]$.
In this case, Lemma \ref{l5.1}(i) implies that $F^0_{p,\,q}(\cx)
\subset F^0_{p,\,2}(\cx)$, which together with Lemma \ref{l5.1}(iv)
and the known facts that $\lp$ for $p\in(1,\fz)$, $h^1(\cx)$ and
$\sbmo(\cx)$ are all subspaces of $L^1_\loc(\cx)$ again further
implies that in this case $F^0_{p,\,q}(\cx)$ is a subspace of
$L^1_\loc(\cx)$. This finishes the proof of Proposition \ref{p5.2}.
\end{proof}

To obtain some conclusions similar to Proposition \ref{p5.2} for the
spaces $\db$ and $\df$, we need to overcome another difficulty,
namely, there exists no counterpart to Lemma \ref{l5.1}(ii) for the
spaces $\db$ and $\df$. However, via Lemma \ref{l5.2}, we can still
improve Corollary \ref{c5.1}(i) in this case into the following
conclusions.

\begin{prop}\label{p5.3} Let $s\in [0,1)$. Then

(i) $\db\subset L^1_\loc(\cx)$ if either $p\in (n/(n+1),\fz]$, $s\in
(n(1/p-1)_+, 1)$, $q\in (0,\fz]$ or $p\in (n/(n+1),1]$,
$s=n(1/p-1)$, $q\in (0,1]$ or $p\in (1,\fz]$, $s=0$, $q\in (0,
\min(p,2)]$;

(ii) $\df\subset L^1_\loc(\cx)$ if either $p\in (n/(n+1),1)$,
$s=n(1/p-1)$, $q\in (n/(n+1),1]$ or $p\in (n/(n+1),1)$, $s\in
(n(1/p-1), 1)$, $q\in (n/(n+1),\fz]$ or $p\in [1,\fz]$, $s\in
(0,1)$, $q\in (n/(n+1),\fz]$ or $p\in [1,\fz]$, $s=0$, $q\in
(n/(n+1), 2]$.
\end{prop}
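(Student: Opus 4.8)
The plan is to reduce Part (ii) to Part (i), to dispatch most sub-cases of Part (i) by already available embeddings and identifications, and to treat the one genuinely new case directly through the homogeneous discrete Calder\'on reproducing formula. For Part (ii) with $s\in(0,1)$, the embedding $\df\subset\dot B^s_{p,\,\max(p,\,q)}(\cx)$ (see \cite[Proposition 5.10(ii) and Proposition 6.9(ii)]{hmy2}; this is also used in the proof of Proposition \ref{p5.1}) reduces Cases (ii)$_1$, (ii)$_2$ and (ii)$_3$ to the Besov assertions of Part (i). For Part (ii) with $s=0$ (Case (ii)$_4$), the homogeneous analogue of Lemma \ref{l5.1}(i) gives $\dot F^0_{p,\,q}(\cx)\subset\dot F^0_{p,\,2}(\cx)$ when $q\in(n/(n+1),2]$, and $\dot F^0_{p,\,2}(\cx)$ is $\lp$ when $p\in(1,\fz)$, $H^1(\cx)$ when $p=1$, and $\bmo(\cx)$ when $p=\fz$ (the homogeneous analogue of Lemma \ref{l5.1}(iv), see \cite{hmy2}), all of which lie in $L^1_\loc(\cx)$. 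For Part (i) with $p\in[1,\fz]$ and $s\in(0,1)$, Proposition \ref{p5.1}(i) and the H\"older inequality give $\db\subset L^p_\loc(\cx)\subset L^1_\loc(\cx)$; and for Part (i) with $s=0$, the homogeneous analogues of Lemma \ref{l5.1}(i) and (iii) give $\dot B^0_{1,\,q}(\cx)\subset\dot B^0_{1,\,1}(\cx)\subset\dot F^0_{1,\,2}(\cx)=H^1(\cx)$ when $q\in(0,1]$ and $\dot B^0_{p,\,q}(\cx)\subset\dot B^0_{p,\,\min(p,\,2)}(\cx)\subset\dot F^0_{p,\,2}(\cx)$ when $p\in(1,\fz]$ and $q\in(0,\min(p,2)]$. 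After all these reductions, the only case left is to show $\db\subset L^1_\loc(\cx)$ when $p\in(n/(n+1),1)$ and either $q\in(0,\fz]$, $s\in(n(1/p-1),1)$, or $q\in(0,1]$, $s=n(1/p-1)>0$.

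For this remaining case I would follow the proof of Proposition \ref{p5.2}, Case (i)$_2$, replacing Lemma \ref{l4.2} by its homogeneous counterpart, Lemma \ref{l5.2}. Realizing $f\in\db$ in $(\ocg^\ez_0(\bz,\gz))'$ with $\ez\in(s,1)$ and $\bz,\ \gz$ as in \eqref{2.5}, Lemma \ref{l5.2} yields
$$f(x)=\sum_{k=-\fz}^\fz\sum_{\tau\in\ik}\sum_{\nu=1}^{\nkt}\mu(\qtn)\,\wdk(x,\ytn)\,D_k(f)(\ytn),$$
and it suffices to prove $\int_{B(x_1,\,2^{-l_0})}|f(x)|\,d\mu(x)<\fz$ for each $l_0\in\zz$. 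Substituting this identity, applying the size estimate for $\wdk$, and grouping the cubes $\qtn$ at each scale $k$ into the dyadic annuli $E^{k,\,\nu}_{\tau,\,l}$ of those $\qtn$ whose centers $\ytn$ satisfy $d(x_1,\ytn)\sim2^{l-l_0}$, exactly as in \eqref{5.1}, the scales $k\ge1$ are controlled verbatim by the estimates \eqref{5.2}--\eqref{5.7}, by \eqref{4.3} (or the H\"older inequality when $q>1$), and by the hypothesis $s\ge n(1/p-1)$ (together with $q\le1$ in the case of equality), contributing $\ls\|f\|_\db$.

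\emph{The main obstacle} is the low-frequency part $\sum_{k\le0}$: there $2^{-k}\ge1$, so the factor $2^{-k\ezp}/[2^{-k}+d(x,\ytn)]^{\ezp}$ is only bounded by $1$ and contributes no decay in $k$, while $\|D_k(f)\|_\lp\sim2^{-ks}\to\fz$ as $k\to-\fz$. To recover summability one must use the cancellation $\int_\cx\wdk(x,w)\,d\mu(w)=0$ together with the regularity of $\wdk(x,\cdot)$ and of $D_k(f)$ at scale $2^{-k}$ to rewrite, for $x\in B(x_1,2^{-l_0})$, the inner sum as $\int_\cx\wdk(x,y)[D_k(f)(y)-D_k(f)(x)]\,d\mu(y)$, thereby gaining decay in $2^{-k}$ off the diagonal, and then to exploit the reverse doubling estimates $V(x,\ytn)\gs2^{l\kz}\mu(B(x_1,2^{-l_0}))$ and (for $k\le l_0$) $\mu(B(x_1,2^{-l_0}))\ls2^{(k-l_0)\kz}\mu(B(x_1,2^{-k}))$, available since $\cx$ is an RD-space, to gain decay in $l$ and in $-k$; the finitely many scales with $l_0<k\le0$ are handled individually. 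This is the mechanism used in the proof of \cite[Proposition 4.2]{my} for the homogeneous $L^p_\loc$ statement, and carrying it through with the integrability exponent $1$ in place of $p$ is the main point here. Summing the resulting geometric series in $l$ and in $-k$ against $\{2^{ks}\|D_k(f)\|_\lp\}_{k\in\zz}\in\ell^q$ (via \eqref{4.3} when $q\le1$ and the H\"older inequality when $q>1$, and using $\kz>0$ and $\ezp>0$) yields $\int_{B(x_1,\,2^{-l_0})}|f(x)|\,d\mu(x)\ls\|f\|_\db<\fz$; this establishes $\db\subset L^1_\loc(\cx)$ in the remaining case, and hence, by the reductions above, Proposition \ref{p5.3}.
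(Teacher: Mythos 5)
Your reductions agree with the paper's. For Part~(ii), cases with $s>0$ are reduced to Part~(i) via $\df\subset\dot B^s_{p,\,\max(p,\,q)}(\cx)$, and the $s=0$ case via $\dot F^0_{p,\,q}(\cx)\subset\dot F^0_{p,\,2}(\cx)$ and the identification of the latter with $L^p$, $H^1$ or $\bmo$; for Part~(i), the cases $p\ge1$, $s>0$ follow from Corollary~\ref{c5.1}(i), the cases $s=0$ from the homogeneous analogues of Lemma~\ref{l5.1}(i), (iii), (iv); and you correctly identify the one genuinely new case, $\db$ with $p\in(n/(n+1),1)$ and $s\ge n(1/p-1)$, to be handled via Lemma~\ref{l5.2}. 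All of this is the same as in the paper.

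However, for the core new case, your proposed treatment of the low-frequency scales diverges from the paper's and has a gap. The paper does \emph{not} use the cancellation $\int_\cx\wdk(x,w)\,d\mu(w)=0$ together with regularity of $D_k(f)$. Instead it uses a different source of cancellation: since $f$ is a distribution on $\ocg^\ez_0(\bz,\gz)$ and the test functions $g$ there satisfy $\int_\cx g\,d\mu=0$, one may freely subtract from $\wdk(\cdot,\ytn)$ the constant function $\wdk(x_1,\ytn)$. The paper splits the reproducing sum at $k=l_0$ (not at $k=0$); for $k\le l_0-1$ it writes the contribution as $\sum_{\tau,\nu}\mu(\qtn)D_k(f)(\ytn)[\wdk(x,\ytn)-\wdk(x_1,\ytn)]$ and exploits the regularity of $\wdk$ in the \emph{first} variable, which yields the key small factor $\lf[\frac{d(x,x_1)}{2^{-k}+d(x_1,\ytn)}\r]^{\ezp}\le[d(x,x_1)\,2^{k}]^{\ezp}\ls 2^{(k-l_0)\ezp}$ on $B(x_1,2^{-l_0})$. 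Combined with \eqref{4.3} (raising the discrete $\tau,\nu$ sum to the power $p\le1$) and $\ezp>s$, this makes $\sum_{k<l_0}$ converge. Your proposed rewriting $\int_\cx\wdk(x,y)[D_k(f)(y)-D_k(f)(x)]\,d\mu(y)$ suffers from several problems: the discrete sum $\sum_{\tau,\nu}\mu(\qtn)\wdk(x,\ytn)$ is not exactly zero, so the step is not legitimate at the level of Lemma~\ref{l5.2}; the modulus of continuity of $D_k(f)$ at scale $2^{-k}$ is not controlled by $\|D_k(f)\|_\lp$ when $p<1$; and even granting such control, the gain $[2^k d(x,y)]^{\ez}$ is of order $1$ on the dominant region $d(x,y)\sim 2^{-k}$ and provides no decay in $k$, so the low-frequency sum would not converge. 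The needed decay must come from $d(x,x_1)\le 2^{-l_0}\ll 2^{-k}$ via the $x$-regularity of $\wdk$, and that requires the split at $k=l_0$ and the subtraction of $\wdk(x_1,\ytn)$ rather than the subtraction you describe.
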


To prove Proposition \ref{p5.3}, we still need
the following discrete homogeneous
Calder\'on reproducing formula
established in Theorems 4.11 of \cite{hmy2}.

\begin{lem}\label{l5.2}
Let $\ez\in (0,1)$ and $\{S_k\}_{k\in\zz}$ be an approximation of
the identity of order $1$ with bounded support. For $k\in\zz$, set
$D_k\equiv S_k-S_{k-1}$. Then, for any fixed $j\in\nn$ large enough, there
exists a family $\{\wz D_k\}_{k\in\zz}$ of linear operators  such
that for any fixed $\ytn\in\qtn$ with $k\in\zz$, $\tau\in\ik$ and
$\nu=1,\cdots,\nkt$, and all $f\in(\ocg_0^\ez(\bz,\gz))'$ with
$0<\bz,\ \gz<\ez$ and $x\in\cx$,
$$f(x)=\dsum^\fz_{k=-\fz}\dsum_{\tau\in\ik}\dsum^\nkt_{\nu=1}
\mu(\qtn)\wz D_k(x,\ytn)D_k(f)(\ytn),$$ where the series converge in
$(\ocg_0^\ez(\bz,\gz))'$. Moreover, for any $\ez'\in (\ez,1)$,
there exists a positive constant $C_{\ez'}$ such that the kernels,
denoted by $\wz D_k(x,y)$, of the operators $\wz D_k$ satisfy (i),
(ii) and (iii) of Lemma \ref{l3.1}.
\end{lem}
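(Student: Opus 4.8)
The plan is to derive the discrete formula from the continuous Calder\'on reproducing formula of Lemma \ref{l3.1} by a discretization-plus-Neumann-series argument. Let $\{\wz D_k^{(0)}\}_{k\in\zz}$ be the operators of Lemma \ref{l3.1}, so that $f=\sum_{k\in\zz}\wz D_k^{(0)}D_k(f)$ holds in $(\ocg_0^\ez(\bz,\gz))'$ (by duality from the version stated there) and the kernels $\wz D_k^{(0)}(x,y)$ satisfy (i), (ii) and (iii) of Lemma \ref{l3.1} with some $\ezp\in(\ez,1)$, which we take close to $1$; fix also $\ez''$ with $\ez<\ez''<\ezp$. For a large integer $j$, to be chosen, the cubes $\{\qtn:\ \tau\in\ik,\ \nu=1,\dots,\nkt\}$ are, for each fixed $k$, exactly the dyadic cubes of generation $k+j$ (Lemma \ref{l4.1}(iii)), hence tile $\cx$ up to a null set, and $\diam(\qtn)\le C_52^{-k-j}$. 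Define, for $x\in\cx$,
\begin{equation*}
T_jf(x)\equiv\dsum_{k=-\fz}^{\fz}\dsum_{\tau\in\ik}\dsum_{\nu=1}^{\nkt}\mu(\qtn)\,\wz D_k^{(0)}(x,\ytn)\,D_k(f)(\ytn).
\end{equation*}
Since $\wz D_k^{(0)}D_k(f)(x)=\sum_{\tau,\nu}\int_{\qtn}\wz D_k^{(0)}(x,y)D_k(f)(y)\,d\mu(y)$, one has $f=T_jf+R_jf$ with
\begin{equation*}
R_jf(x)\equiv\dsum_{k=-\fz}^{\fz}\dsum_{\tau\in\ik}\dsum_{\nu=1}^{\nkt}\dint_{\qtn}\lf[\wz D_k^{(0)}(x,y)D_k(f)(y)-\wz D_k^{(0)}(x,\ytn)D_k(f)(\ytn)\r]\,d\mu(y).
\end{equation*}

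The heart of the proof, and the main obstacle, is to show that $R_j$ is bounded on $\ocg_0^\ez(\bz,\gz)$, on $\lp$ for $p\in(1,\fz)$, and on $(\ocg_0^\ez(\bz,\gz))'$, with operator norm at most $C2^{-j\ez''}$, so that for $j$ large $\|R_j\|<1$ on each of these spaces. For $y\in\qtn$ one has $d(y,\ytn)\le C_52^{-k-j}\le\frac12[2^{-k}+d(x,y)]$ once $j$ is large, so the smoothness estimate (ii) of Lemma \ref{l3.1} gives $|\wz D_k^{(0)}(x,y)-\wz D_k^{(0)}(x,\ytn)|\ls 2^{-j\ezp}$ times the size factor of (i) of Lemma \ref{l3.1}; a parallel bound for $|D_k(f)(y)-D_k(f)(\ytn)|$ follows, after expanding $D_k(f)=\sum_{k'}(D_k\wz D_{k'}^{(0)})D_{k'}(f)$, from the regularity of $D_k=S_k-S_{k-1}$ and the almost-orthogonality estimate for $D_k\wz D_{k'}^{(0)}$ (the analogue of \eqref{4.6}). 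Inserting these bounds into $R_jf$ reduces the matter to geometric sums in $|k-k'|$ combined with sums over the cubes $\{\qtn\}$, which are handled exactly by the discrete-summation and vector-valued maximal-function estimates used repeatedly in Section \ref{s4} (cf. \eqref{4.5} through \eqref{4.8} and the computations following them); each such step contributes the extra factor $2^{-j\ez''}$. The delicate point is to run these estimates simultaneously on the test-function, $\lp$ and distribution quasi-norms.

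Granting this, $T_j=I-R_j$ is invertible on each of the three spaces, with $T_j^{-1}=\sum_{m\ge0}R_j^m$. Applying $T_j^{-1}$ to the identity $T_jf=\sum_{k,\tau,\nu}\mu(\qtn)\wz D_k^{(0)}(\cdot,\ytn)D_k(f)(\ytn)$, whose right-hand side converges in $(\ocg_0^\ez(\bz,\gz))'$ because it equals $f-R_jf$, and interchanging $T_j^{-1}$ with the sum by continuity, one obtains
\begin{equation*}
f=\dsum_{k=-\fz}^{\fz}\dsum_{\tau\in\ik}\dsum_{\nu=1}^{\nkt}\mu(\qtn)\,\wz D_k(x,\ytn)\,D_k(f)(\ytn),\qquad \wz D_k(\cdot,y)\equiv T_j^{-1}\lf[\wz D_k^{(0)}(\cdot,y)\r],
\end{equation*}
the series converging in $(\ocg_0^\ez(\bz,\gz))'$; since $\wz D_k^{(0)}(\cdot,y)$ is defined for every $y\in\cx$, so is $\wz D_k(\cdot,y)$, and any choice of $\ytn\in\qtn$ is admissible throughout.

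Finally one checks that $\wz D_k(x,y)$ satisfies (i), (ii) and (iii) of Lemma \ref{l3.1} with exponent $\ez''$; since $\ez''$ can be prescribed to be any element of $(\ez,1)$ (take $\ezp$ closer to $1$), this gives the ``moreover'' part. By Lemma \ref{l3.1}, $\wz D_k^{(0)}(\cdot,y)\in\ocg(y,2^{-k},\ezp,\ezp)$ with norm bounded uniformly in $k$ and $y$, and a localized form of the main step shows that $R_j$, hence $T_j^{-1}$, maps $\ocg(y,2^{-k},\ezp,\ezp)$ boundedly into $\ocg(y,2^{-k},\ez'',\ez'')$ uniformly in $k$ and $y$ (with $\|R_j\|\le\frac12$ there for $j$ large); this yields the size bound (i) and smoothness bound (ii) for $\wz D_k(x,y)$. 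For the cancellation (iii): since $\int_\cx\wz D_k^{(0)}(x,\ytn)\,d\mu(x)=0$ for all $(\tau,\nu)$, one has $\int_\cx T_jg\,d\mu=0$ for every admissible $g$, hence $\int_\cx R_j^m[\wz D_k^{(0)}(\cdot,y)]\,d\mu=0$ for all $m$ by induction, and therefore $\int_\cx\wz D_k(x,y)\,d\mu(x)=0$; and since $\int_\cx\wz D_k^{(0)}(x,w)\,d\mu(w)=0$, applying $T_j^{-1}$ in the $x$-variable and interchanging with the $w$-integration gives $\int_\cx\wz D_k(x,w)\,d\mu(w)=0$. This completes the plan.
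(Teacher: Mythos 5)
First, a structural remark: the paper itself does not prove Lemma \ref{l5.2}; it is quoted directly from Theorem 4.11 of \cite{hmy2}, so there is no in-paper argument to compare against. Your strategy --- discretize the continuous formula of Lemma \ref{l3.1} over the cubes $\qtn$, show that the remainder $R_j$ has small operator norm, and invert $T_j=I-R_j$ by a Neumann series --- is exactly the strategy of the cited reference, so the outline is the right one.

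As written, however, the proposal has a genuine gap: the entire technical content of the result is the estimate $\|R_j\|\ls 2^{-j\ez''}$ on the test-function space (and, since $R_j$ is not symmetric, the corresponding bound for its transpose, which is what one actually needs in order to let $T_j^{-1}$ act on $(\ocg_0^\ez(\bz,\gz))'$ by duality), together with the uniform bound for $R_j$ on each scaled space $\cg(y,2^{-k},\ezp,\ezp)$ that underlies the kernel estimates (i) and (ii) for $\wz D_k=T_j^{-1}\wz D_k^{(0)}$. You state these bounds and list the ingredients (smoothness of $\wz D_k^{(0)}$ and of $D_k$, almost orthogonality, geometric sums over $|k-k'|$ and over cubes), but none of them is carried out; in \cite{hmy2} this verification occupies the bulk of the proof, because one must check both the size and the H\"older conditions of the image $R_jg$ for $g$ in the test class, not merely pointwise bounds on the kernel of $R_j$. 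Two further points are glossed over. The estimate for $|D_k(f)(y)-D_k(f)(\ytn)|$ for a general distribution $f$ only makes sense after the whole argument has been transposed onto test functions, so the claimed ``simultaneous'' treatment of the three quasi-norms needs to be organized more carefully than stated. And the interchanges of $T_j^{-1}$ with the infinite sum over $(k,\tau,\nu)$, and with the $w$-integration in the proof of the cancellation (iii), require convergence of those series in a topology in which $T_j^{-1}$ is continuous, which again rests on the unproved operator bounds. In short, the plan is the correct one, but the proof is not yet there.
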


\begin{proof}[Proof of Proposition \ref{p5.3}]

 To establish (i) of Proposition \ref{p5.3}, by Corollary
\ref{c5.1}(i), some properties similar to Lemma \ref{l5.1} (except
(ii); see \cite{hmy2}) for the space $\db$, we only need to consider
the cases when $p\in (n/(n+1),1)$, $s\in (n(1/p-1),1)$ and $q\in
(0,\fz]$ or $p\in (n/(n+1),1)$, $s=n(1/p-1)$ and $q\in (0,1]$. Let
$f\in\db$ with $s$, $p$ and $q$ as above. We show that for any
$l_0\in\zz$,
\begin{equation}\label{5.8}
    \int_{B(x_1,\, 2^{-l_0})}|f(x)|\,d\mu(x)
    \ls 2^{-l_0[s-n(1/p-1)]}\|f\|_{\dot B^s_{p,\,\fz}(\cx)}.
\end{equation}
Let $\{D_k\}_{k\in\zz}$ and other notation be as in Lemma
\ref{l5.2}. Since $f\in\dot B^s_{p,\,\fz}(\cx)$, by Definition
\ref{d4.1}, we know that $f\in(\ocg^\ez_0(\bz,\gz))'$ with
$\bz,\ \gz$ as in \eqref{2.5}, where $\ez\in (s,1)$ and $p\in
(n/(n+\ez),1)$. Thus, for any $f\in\ocg^\ez_0(\bz,\gz)$, since
$\int_\cx g(x)\,d\mu(x)=0$, by Lemma \ref{l5.2}, we have
\begin{eqnarray*}
\laz f, g\raz&=&\dsum_{k=-\fz}^{l_0-1}\dsum_{\tau\in\ik}\dsum^\nkt_{\nu=1}
\mu(\qtn)D_k(f)(\ytn)\lf\laz\wdk(\cdot,\ytn)-\wdk(x_1,\ytn),g\r\raz\\
&&+\dsum_{k=l_0}^\fz\dsum_{\tau\in\ik}\dsum^\nkt_{\nu=1}
\mu(\qtn)D_k(f)(\ytn)\lf\laz\wdk(\cdot,\ytn),g\r\raz.
\end{eqnarray*}
Thus, $f$ is given by the (in $(\ocg^\ez_0(\bz,\gz))'$ with
$\bz,\ \gz$ as in \eqref{2.5}) convergent series of functions
\begin{eqnarray*}
&&\dsum_{k=-\fz}^{l_0-1}\dsum_{\tau\in\ik}\dsum^\nkt_{\nu=1}
\mu(\qtn)D_k(f)(\ytn)\lf[\wdk(x,\ytn)-\wdk(x_1,\ytn)\r]\\
&&\hs\hs+\dsum_{k=l_0}^\fz\dsum_{\tau\in\ik}\dsum^\nkt_{\nu=1}
\mu(\qtn)D_k(f)(\ytn)\wdk(x,\ytn)\equiv Y_1+Y_2.
\end{eqnarray*}
We now prove that \eqref{5.8} holds for $Y_1$ and $Y_2$.

By the regularity of $\wdk$ with $\ezp\in(\ez,1)$, for all $x\in\cx$, we have
\begin{eqnarray*}
|Y_1|&\ls&\dsum_{k=-\fz}^{l_0-1}\dsum_{\tau\in\ik}\dsum^\nkt_{\nu=1}
\mu(\qtn)\lf|D_k(f)(\ytn)\r|\lf[\dfrac {d(x,x_1)}{2^{-k}+d(x_1,\ytn)}\r]^\ezp\\
&&\times\dfrac 1{V_{2^{-k}}(x_1)+V_{2^{-k}}(\ytn)+V(x_1,\ytn)}
\dfrac {2^{-k\ezp}}{[2^{-k}+d(x_1,\ytn)]^\ezp}\\
&\ls&\lf[d(x,x_1)\r]^\ezp\dsum_{k=-\fz}^{l_0-1}2^{k\ezp}
\dsum_{\tau\in\ik}\dsum^\nkt_{\nu=1}
\mu(\qtn)\lf|D_k(f)(\ytn)\r|\\
&&\times\dfrac 1{V_{2^{-k}}(x_1)+V_{2^{-k}}(\ytn)+V(x_1,\ytn)}
\dfrac {2^{-k\ezp}}{[2^{-k}+d(x_1,\ytn)]^\ezp}.
\end{eqnarray*}
Since $p\in (n/(n+1),1)$, for $k\le l_0-1$, it is easy to see that
\begin{eqnarray*}
&&\frac 1{[\mu(\qtn)]^{1-p}}\lf[\dfrac 1{V_{2^{-k}}(x_1)
+V_{2^{-k}}(\ytn)+V(x_1,\ytn)}\r]^p\dfrac {2^{-k\ezp p}}
{[2^{-k}+d(x_1,\ytn)]^{\ezp p}}\\
&&\hs\ls\dfrac 1{V_{2^{-l_0}}(x_1)}\lf[\frac {V_{2^{-k}}(x_1)
+V_{2^{-k}}(\ytn)+V(x_1,\ytn)}{\mu(\qtn)}\r]^{1-p}
\dfrac {2^{-k\ezp p}}
{[2^{-k}+d(x_1,\ytn)]^{\ezp p}}\\
&&\hs\ls\lf\{\begin{array}{l}
\dfrac 1{V_{2^{-l_0}}(x_1)},\quad d(x,\ytn)\le 2^{-k}\\
\dfrac 1{V_{2^{-l_0}}(x_1)}\frac {2^{ln(1-p)}}{2^{l\ezp p}},\quad
2^{-k+l}<d(x,\ytn)\le 2^{-k+l+1},\ l\in\zz_+
        \end{array}\r.\\
&&\hs\ls\dfrac 1{V_{2^{-l_0}}(x_1)}.
\end{eqnarray*}
Using this estimate, we further obtain that for all $x\in\cx$,
\begin{eqnarray*}
|Y_1|&\ls&\lf[d(x,x_1)\r]^\ezp\dsum_{k=-\fz}^{l_0-1}2^{k\ezp}
\lf\{\dsum_{\tau\in\ik}\dsum^\nkt_{\nu=1}
\mu(\qtn)\lf|D_k(f)(\ytn)\r|^p\r\}^{1/p}\\
&\ls&\lf[d(x,x_1)\r]^\ezp\dsum_{k=-\fz}^{l_0-1}2^{k(\ezp-s)}2^{ks}
\|D_k(f)\|_\lp\ls\lf[d(x,x_1)\r]^\ezp\|f\|_\db,
\end{eqnarray*}
where in the penultimate inequality, we used the arbitrariness of
$\ytn\in\qtn$ and in the last inequality, we used the H\"older
inequality when $q\in (1,\fz]$, or \eqref{4.3} when $q\in (0,1]$
together with $\ezp>s$. Thus,
\begin{equation*}
    \int_{B(x_1, 2^{-l_0})}|Y_1|\,d\mu(x)\ls\|f\|_\db<\fz.
\end{equation*}

The estimate for $Y_2$ is similar to the proof of Case (i)$_2$ in
the proof of Proposition \ref{p5.2}(i). Let $E^{k,\nu}_{\tau,l}$
for $l\in\zz_+$ be the same as in the proof of Proposition \ref{p5.2}(i).
By the size condition of $\wdk$, we have
\begin{eqnarray*}
&&\int_{B(x_1, 2^{-l_0})}|Y_2|\,d\mu(x)\\
&&\hs\ls\dsum_{k=l_0}^\fz\dsum^\fz_{l=0}\dsum_{\tau\in\ik}\dsum^\nkt_{\nu=1}
\chi_{E^{k,\nu}_{\tau,l}}(\tau,\nu)\mu(\qtn)\lf|D_k(f)(\ytn)\r|\\
&&\hs\hs\times\int_{B(x_1, 2^{-l_0})}\dfrac 1{V_{2^{-k}}(x)+V_{2^{-k}}(\ytn)+V(x,\ytn)}
\dfrac {2^{-k\ezp}}{[2^{-k}+d(x,\ytn)]^\ezp}\,d\mu(x)\\
&&\hs\ls\dsum_{k=l_0}^\fz\dsum^\fz_{l=0}\dfrac 1{2^{l(\ezp+\kz)}}
\dsum_{\tau\in\ik}\dsum^\nkt_{\nu=1}
\chi_{E^{k,\nu}_{\tau,l}}(\tau,\nu)\mu(\qtn)\lf|D_k(f)(\ytn)\r|\\
&&\hs\ls\dsum_{k=l_0}^\fz\lf[\dsum^\fz_{l=0}\dfrac {2^{ln(1/p-1)}}
{2^{l(\ezp+\kz)}}\r]2^{kn(1/p-1)}\lf\{\dsum_{\tau\in\ik}\dsum^\nkt_{\nu=1}
\mu(\qtn)\lf|D_k(f)(\ytn)\r|^p\r\}^{1/p}\\
&&\hs\ls\dsum_{k=l_0}^\fz2^{k[n(1/p-1)-s]}2^{ks}\|D_k(f)\|_\lp\ls
\|f\|_\db,
\end{eqnarray*}
where we omitted some similar computations to those used in the proof of Case
(i)$_2$ in the proof of Proposition \ref{p5.2}. This finishes the proof
of Proposition \ref{p5.3}(i).

To finish the proof of (ii) of this proposition, we only need to
point out that if $p\in (n/(n+1),1)$, $s\in (n(1/p-1), 1)$ and $q\in
(n/(n+1), \fz]$, then $\df\subset\dot
B^s_{p,\,\max(p,q)}(\cx)\subset L^1_\loc(\cx)$. The other cases are
similar to the proof of Proposition \ref{p5.2}. We omit the details,
which completes the proof of Proposition \ref{p5.3}.
\end{proof}

\begin{rem}\label{r5.2} Similarly to Remark \ref{r5.1},
if $\cx$ is an Ahlfors $n$-regular metric
measure space, $p\in (n/(n+1),1]$, $s=n(1/p-1)$
and $q\in (1,\fz]$, then $\df\subset L^1_\loc(\cx)$.
We omit the details.
\end{rem}

{\bf Acknowledgements.}  The authors sincerely wish to express their
deeply thanks to the referees for their very carefully reading and
also for their suggestive remarks which improve the presentation of this article.

\section*{References}

\begin{enumerate}

\bibitem[1]{a} G. Alexopoulos, Spectral multipliers on Lie groups
of polynomial growth, Proc. Amer. Math. Soc. 120 (1994), 973-979.

\vspace{-0.3cm}
\bibitem[2]{ch90} M. Christ, A $T(b)$ theorem
with remarks on analytic capacity and the Cauchy integral, Colloq.
Math. LX/LXI (1990), 601-628.

\vspace{-0.3cm}
\bibitem[3]{cw1} R. R. Coifman and G. Weiss,
Analyse Harmonique Non-commutative sur Certains Espaces Homog\`enes,
Lecture Notes in Math. 242, Springer, Berlin, 1971.

\vspace{-0.3cm}
\bibitem[4]{cw2} R. R. Coifman and G. Weiss,
Extensions of Hardy spaces and their use in analysis,
Bull. Amer. Math. Soc. 83 (1977), 569-645.

\vspace{-0.3cm}
\bibitem[5]{gks} A. Gogatishvili, P. Koskela and N. Shanmugalingam,
Interpolation properties of Besov spaces defined on
metric spaces, Math. Nachr. 283 (2010), 215-231.

\vspace{-0.3cm}
\bibitem[6]{g} D. Goldberg, A local version of real Hardy spaces,
Duke Math. J. 46 (1979), 27-42.

\vspace{-0.3cm}
\bibitem[7]{hk} P. Haj\l asz and P. Koskela, Sobolev met Poincar\'e,
Mem. Amer. Math. Soc. 145 (2000), No. 688, 1-101.

\vspace{-0.3cm}
\bibitem[8]{hmy1} Y. Han, D. M\"uller and D. Yang, Littlewood-Paley
characterizations for Hardy spaces on spaces of homogeneous type,
Math. Nachr. 279 (2006), 1505-1537.

\vspace{-0.3cm}
\bibitem[9]{hmy2} Y. Han, D. M\"uller and D. Yang, A theory
of Besov and Triebel-Lizorkin spaces on metric measure spaces
modeled on Carnot-Carath\'eodory spaces,  Abstr. Appl. Anal. 2008,
Art. ID 893409, 250 pp.


\vspace{-0.3cm}
\bibitem[10]{hei} J. Heinonen,
Lectures on Analysis on Metric Spaces, Springer-Verlag, New York,
2001.

\vspace{-0.3cm}
\bibitem[11]{ks08}
P. Koskela and E. Saksman, Pointwise characterizations
of Hardy-Sobolev functions, Math. Res. Lett. 15 (2008), 727-744.

\vspace{-0.3cm}
\bibitem[12]{ms79a} R. A. Mac{\'\i}as and C. Segovia,
Lipschitz functions on spaces of homogeneous type,
Adv. in Math. 33 (1979), 257-270.




\vspace{-0.3cm}
\bibitem[13]{my} D. M\"uller and D. Yang, A difference characterization of Besov and
Triebel-Lizorkin spaces on RD-spaces, Forum Math. 21 (2009), 259-298.

\vspace{-0.3cm}
\bibitem[14]{ns06} A. Nagel and E. M. Stein, The $\oz\partial_b$-complex
on decoupled boundaries in $\cc^n$, Ann. of Math. (2) 164 (2006),
649-713.

\vspace{-0.3cm}
\bibitem[15]{nsw} A. Nagel, E. M. Stein and S. Wainger, Balls and
metrics defined by vector fields I. Basic properties, Acta Math. 155
(1985), 103-147.

\vspace{-0.3cm}
\bibitem[16]{st} W. Sickel and H. Triebel, H\"older inequalities and
sharp embeddings in function spaces of $B^s_{pq}$ and
$F^s_{pq}$ type, Z. Anal. Anwendungen 14 (1995), 105-140.

\vspace{-0.3cm}
\bibitem[17]{t83} H. Triebel, Theory of Function Spaces, Birkh\"auser
Verlag, Basel, 1983.

\vspace{-0.3cm}
\bibitem[18]{t92} H. Triebel, Theory of Function Spaces. II,
Birkh\"auser Verlag, Basel, 1992.

\vspace{-0.3cm}
\bibitem[19]{t97} H. Triebel, Fractals and Spectra, Birkh\"auser Verlag, Basel, 1997.

\vspace{-0.3cm}
\bibitem[20]{t06} H. Triebel, Theory of Function Spaces III,
Birkh\"auser Verlag, Basel, 2006.


\vspace{-0.3cm}
\bibitem[21]{vsc} N. Th. Varopoulos, L. Saloff-Coste and T. Coulhon,
Analysis and Geometry on Groups, Cambridge Tracts in Mathematics
100, Cambridge University Press, Cambridge, 1992.

\vspace{-0.3cm}
\bibitem[22]{y031} D. Yang, Embedding theorems of Besov and
Triebel-Lizorkin spaces on spaces of homogeneous type, Sci. China
Ser. A 46 (2003), 187-199.

\end{enumerate}

\bigskip

\noindent {\sc Dachun Yang and Yuan Zhou}

\noindent School of Mathematical Sciences, Beijing Normal
University, Laboratory of Mathematics and Complex Systems, Ministry
of Education, Beijing 100875, People's Republic of China

\medskip

\noindent{\it E-mails:} \texttt{dcyang@bnu.edu.cn} and
\texttt{yuanzhou@mail.bnu.edu.cn}

\end{document}